\newcommand{\Hom}{\mathop{\mathrm{Hom}}\nolimits}
\newcommand{\Ker}{\mathop{\mathrm{Ker}}\nolimits}
\newcommand{\Image}{\mathop{\mathrm{Im}}\nolimits}
\newcommand{\Spec}{\mathop{\mathrm{Spec}}\nolimits}
\newcommand{\colim}[1]{\mathop{\underset{#1}{\mathrm{colim}}}\nolimits}
\newcommand{\Mod}{\mathcal{M}od_k}
\newcommand{\Smk}{\mathcal{S}m_k}
\newcommand{\Shvk}{\mathcal{S}hv_k}
\newcommand{\aone}{\mathbb{A}^1}
\newcommand{\HAone}[1]{\mathop{\mathbf{H}_{#1}^{\mathbb{A}^1}}\nolimits}
\newtheorem{thm}{Theorem}[section]
\newtheorem{thmi}{Theorem}
\newtheorem{lem}[thm]{Lemma}
\newtheorem{prop}[thm]{Proposition}
\newtheorem{cor}[thm]{Corollary}
\theoremstyle{definition}
\newtheorem{defi}[thm]{Definition}
\newtheorem{examp}[thm]{Example}
\newtheorem{rem}[thm]{Remark}
\newtheorem*{nota}{Conventions}
\newtheorem*{acknow}{Acknowledgments}
\begin{document}

\title{Universal birational invariants and $\aone$-homology\footnote{Mathematics Subject Classification: Primary 14F42; Secondary 14E99, 14G05, 19E15, 14F05}}
\author{Yuri Shimizu\footnote{Department of Mathematics, Tokyo Institute of Technology, Tokyo, Japan \newline E-mail: qingshuiyouli27@gmail.com}}
\maketitle

\begin{abstract}
Let $k$ be a field admitting a resolution of singularities. In this paper, we prove that the functor of zeroth $\aone$-homology $\HAone{0}$ is universal as a functorial birational invariant of smooth proper $k$-varieties taking values in a category enriched by abelian groups. For a smooth proper $k$-variety $X$, we also prove that the dimension of $\HAone{0}(X;\mathbb{Q})(\Spec k)$ coincides with the number of $R$-equivalence classes of $X(k)$. We deduce these results as consequences of the structure theorem that for a smooth proper $k$-variety $X$, the sheaf $\HAone{0}(X)$ is the free abelian presheaf generated by the birational $\aone$-connected components $\pi_0^{b\aone}(X)$ of Asok-Morel.
\end{abstract}

\setcounter{tocdepth}{1}
\tableofcontents

\section*{Introduction}

Let $k$ be a field. In \cite{Mo1}, Morel introduced the $\aone$-homology theory of smooth (separated and of finite type) $k$-schemes. In the unstable $\aone$-homotopy theory introduced by Morel-Voevodsky \cite{MV}, the $\aone$-homology plays a role of the ordinary homology of topological spaces. For each $n \geq 0$, the $n$-th $\aone$-homology of a smooth $k$-scheme $X$ is denoted by $\HAone{n}(X)$. This is a Nisnevich sheaf of abelian groups on the category of smooth $k$-schemes $\Smk$. As an application of $\aone$-homology, Asok \cite{As} proved that the zeroth $\aone$-homology is a birational invariant of smooth proper $k$-varieties if $k$ is infinite. In this paper, we study the structure of the zeroth $\aone$-homology of smooth proper $k$-varieties and give some applications. In particular, we prove that the zeroth $\aone$-homology functor $\HAone{0}$ is universal as a functorial birational invariant of smooth proper $k$-varieties. 

First, we state the universal birational invariance of the zeroth $\aone$-homology. Let $\Smk^{prop}$ be the category of smooth proper $k$-schemes and $\mathbf{Im}\HAone{0}$ be the full subcategory of the category of abelian presheaves on $\Smk$ spanned by objects isomorphic to $\HAone{0}(X)$ for some $X \in \Smk^{prop}$. Our first theorem is stated as follows.

\begin{thmi}[see Theorem \ref{univ. birat. of H0A1}]\label{Intro. univ. br.}
Assume $k$ admits a resolution of singularities. Let $\mathcal{A}$ be an arbitrary category enriched by abelian groups (\textit{e.g.} an abelian category).
\begin{enumerate}
\item Let $F : \Smk^{prop} \to \mathcal{A}$ be an arbitrary functor which sends each birational morphism to an isomorphism. Then there exists one and only one (up to a natural equivalence) additive functor ${F_{S_b} : \mathbf{Im}\HAone{0} \to \mathcal{A}}$ such that the diagram
\begin{equation*}
\xymatrix{
\Smk^{prop} \ar[r]^-F \ar[d]_-{\HAone{0}} &\mathcal{A} \\
\mathbf{Im}\HAone{0} \ar@{.>}[ru]_-{F_{S_b}}
}
\end{equation*}
is $2$-commutative, \textit{i.e.}, we have a natural equivalence $F \cong F_{S_b} \circ \HAone{0}$.
\item Let $F' : (\Smk^{prop})^{op} \to \mathcal{A}$ be an arbitrary functor which sends each birational morphism to an isomorphism. Then there exists one and only one (up to a natural equivalence) additive functor ${F'_{S_b} : (\mathbf{Im}\HAone{0})^{op} \to \mathcal{A}}$ such that the diagram
\begin{equation*}
\xymatrix{
(\Smk^{prop})^{op} \ar[r]^-{F'} \ar[d]_-{(\HAone{0})^{op}} &\mathcal{A} \\
(\mathbf{Im}\HAone{0})^{op} \ar@{.>}[ru]_-{F'_{S_b}}
}
\end{equation*}
is $2$-commutative.
\end{enumerate}
\end{thmi}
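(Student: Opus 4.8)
The plan is to deduce the universal property entirely from the structure theorem, which identifies $\HAone{0}(X)$ with the free abelian presheaf $\mathbb{Z}[\pi_0^{b\aone}(X)]$ on the birational $\aone$-connected components, together with the birational invariance of $\HAone{0}$ due to Asok. I will treat part (1) in detail; part (2) then follows by reading the same construction in the opposite category, since the combinatorial input below is insensitive to the direction of the arrows. The two ingredients I want to isolate are a \emph{generation lemma} describing all morphisms of $\mathbf{Im}\HAone{0}$, and a \emph{rigidity} statement making the resulting decomposition canonical.

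First I would establish the generation lemma: every morphism $\psi\colon\HAone{0}(X)\to\HAone{0}(Y)$ in $\mathbf{Im}\HAone{0}$ is a finite $\mathbb{Z}$-linear combination $\psi=\sum_i n_i\,\phi_i$, where each $\phi_i$ is induced by a rational map $X\dashrightarrow Y$, and that the coefficients are unique. To see this, unwind the structure theorem: by the adjunction between the free abelian presheaf functor and the forgetful functor, a morphism $\mathbb{Z}[\pi_0^{b\aone}(X)]\to\mathbb{Z}[\pi_0^{b\aone}(Y)]$ is the same datum as a map of set-valued presheaves from $\pi_0^{b\aone}(X)$ into the underlying presheaf of $\mathbb{Z}[\pi_0^{b\aone}(Y)]$. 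Because $\pi_0^{b\aone}(X)$ is a birational (unramified) sheaf, such a map is controlled by its value on the generic section over the function field $k(X)$, and that value is an element of $\mathbb{Z}[\pi_0^{b\aone}(Y)(k(X))]$, i.e. a $\mathbb{Z}$-combination of $R$-equivalence classes of $k(X)$-points of $Y$, which are exactly rational maps $X\dashrightarrow Y$ up to $R$-equivalence. Here the hypothesis that $k$ admits resolution of singularities enters: each such rational map is realized by an honest morphism $g_i\colon\widetilde X_i\to Y$ from a smooth proper model $\widetilde X_i\to X$ obtained by blowing up, and the birational invariance of $\HAone{0}$ makes $\HAone{0}(\widetilde X_i\to X)$ invertible, so that $\phi_i:=\HAone{0}(g_i)\circ\HAone{0}(\widetilde X_i\to X)^{-1}$ is a well-defined morphism $\HAone{0}(X)\to\HAone{0}(Y)$. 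Freeness of $\mathbb{Z}[-]$ yields uniqueness of the $n_i$.

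With the generation lemma in hand I would define $F_{S_b}$ by $F_{S_b}(\HAone{0}(X)):=F(X)$ on objects and $F_{S_b}(\psi):=\sum_i n_i\,F(g_i)\circ F(\widetilde X_i\to X)^{-1}$ on morphisms, the inverses existing precisely because $F$ sends birational morphisms to isomorphisms. The verifications are then: (i) well-definedness, i.e. independence of the chosen resolutions $\widetilde X_i$ (any two are dominated by a common blow-up, which $F$ kills) and of the decomposition (guaranteed by its uniqueness); (ii) additivity, immediate from the $\mathbb{Z}$-linear formula; (iii) functoriality $F_{S_b}(\psi'\circ\psi)=F_{S_b}(\psi')\circ F_{S_b}(\psi)$, reduced by bilinearity to composites of rational maps and settled by resolving the composite compatibly, again using that $F$ inverts birational morphisms; and (iv) $2$-commutativity, since for a genuine morphism $f\colon X\to Y$ the rational map is $f$ itself and $F_{S_b}(\HAone{0}(f))=F(f)$, giving the natural equivalence $F\cong F_{S_b}\circ\HAone{0}$ (the unavoidable choice of a representative $X$ for each object only affects $F_{S_b}$ up to natural equivalence). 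Uniqueness follows because $\HAone{0}$ is essentially surjective onto $\mathbf{Im}\HAone{0}$ by definition, so any additive $G$ with $G\circ\HAone{0}\cong F$ must agree with $F_{S_b}$ on objects and on the generating morphisms $\phi_i$, hence everywhere by additivity.

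I expect the main obstacle to be the generation lemma, and more precisely the rigidity claim that a presheaf map out of the birational sheaf $\pi_0^{b\aone}(X)$ is pinned down by its generic value, combined with the realization of rational maps as morphisms from blow-ups. This is exactly the step that forces the resolution-of-singularities hypothesis and that controls all of $\Hom_{\mathbf{Im}\HAone{0}}$; once the geometric morphisms are shown to generate with unique coefficients, the functoriality and uniqueness checks become formal consequences of the freeness built into the structure theorem. For part (2) I would run the identical argument with $F'$ contravariant, defining $F'_{S_b}(\psi):=\sum_i n_i\,F'(\widetilde X_i\to X)^{-1}\circ F'(g_i)$; the generation lemma is unchanged, so the same well-definedness, functoriality, and uniqueness arguments apply verbatim in $(\mathbf{Im}\HAone{0})^{op}$.
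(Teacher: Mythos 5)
Your ``generation lemma'' is correct, and it is in substance the paper's own key step (Proposition \ref{A1homology is fully faithful}): combining the structure theorem (Theorem \ref{str H0A1}) with the free--forgetful adjunction and Yoneda's lemma for birational sheaves identifies
$\Hom_{\mathbf{Im}\HAone{0}}(\HAone{0}(X),\HAone{0}(Y))$ with $\mathbb{Z}\bigl[\pi_0^{b\aone}(Y)(k(X))\bigr]\cong\mathbb{Z}\bigl[Y(k(X))/R\bigr]$, with unique coefficients. The genuine gap is in your step (iii), functoriality of the hand-built $F_{S_b}$. After reducing by bilinearity to two roof morphisms $\phi=\HAone{0}(g)\circ\HAone{0}(s)^{-1}$ (with $s:X'\to X$ birational, $g:X'\to Y$) and $\phi'=\HAone{0}(g')\circ\HAone{0}(s')^{-1}$ (with $s':Y'\to Y$ birational, $g':Y'\to Z$), you propose to ``resolve the composite compatibly.'' This works only when $g$ is dominant: then a resolution $W$ of the closure of the graph over the locus where $s'$ is an isomorphism gives $t:W\to X'$ birational and $h:W\to Y'$ with $s'\circ h=g\circ t$, and applying $\HAone{0}$ and $F$ to this one commuting square yields $F_{S_b}(\phi'\circ\phi)=F_{S_b}(\phi')\circ F_{S_b}(\phi)$. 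When $g$ is \emph{not} dominant --- say $g(X')$ lies inside the center of the blow-up $s'$ --- the rational map $(s')^{-1}\circ g$ is undefined, no irreducible component of $X'\times_Y Y'$ maps birationally to $X'$, and there is no square to feed to $F$. To produce a common roof one must \emph{lift} the $k(X')$-point $g$ of $Y$ through the proper birational morphism $s'$; such lifts do exist, but this needs its own argument (a valuative-criterion argument with a well-chosen discrete valuation ring centered at the generic point of $\overline{g(X')}$, going beyond the codimension-one case used in Lemma \ref{proper birationality}, or rational chain connectedness of the fibers of $s'$). This is precisely the delicate point in understanding composition in $S_b^{-1}\Smk^{prop}$ that the cited Kahn--Sujatha results exist to handle, and as written your proof skips it.

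The gap is avoidable, and the paper's proof of Theorem \ref{univ. birat. of H0A1} is engineered to avoid it: it never describes composition in $S_b^{-1}\Smk^{prop}$ explicitly. Once full faithfulness is known, the functor $\mathbb{Z}(S_b^{-1}\Smk^{pv})\to\Mod(\mathbb{Z})$ built from the Yoneda embedding of $S_b^{-1}\Smk^{var}$ into birational sheaves is \emph{automatically} functorial (no roofs are ever chosen), and Theorem \ref{str H0A1} identifies it with $\HAone{0}$; hence $\HAone{0}$ induces an equivalence $\mathbb{Z}(S_b^{-1}\Smk^{prop})\cong\mathbf{Im}\HAone{0}$. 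Given $F$ inverting birational morphisms, the universal property of the localization $\Smk^{prop}\to S_b^{-1}\Smk^{prop}$ and the $\mathbb{Z}$-linearization $S_b^{-1}\Smk^{prop}\to\mathbb{Z}(S_b^{-1}\Smk^{prop})$ (using that $\mathcal{A}$ is enriched in abelian groups) then produce $F_{S_b}$, its additivity, the $2$-commutativity, and the uniqueness, all formally. So either repair your step (iii) by proving the point-lifting statement for proper birational morphisms, or replace the explicit construction of $F_{S_b}$ by the localization argument; with the generation lemma you already have, the second option finishes the proof in a few lines.
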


In this theorem, we may replace $\Smk^{prop}$ with other full subcategory of $\Smk^{prop}$ like the category of projective varieties (see Definition \ref{def. br. ff} and Example \ref{ex. br. ff}). A similar result also holds in a motivic situation (see Theorem \ref{univ. br. inv. tr.}). In this case, $\Smk^{prop}$ is replaced with the full subcategory of the category of finite correspondences consisting of proper schemes and the $\aone$-homology with the zeroth homology of Voevodsky's motives, called Suslin homology sheaves (cf. \cite{Vo00}). The birational invariance theorem of Asok \cite{As} is assumed $k$ infinite, but this assumption is not necessary for our proof. In general, we prove that a proper birational morphism of (not necessary proper) varieties over an arbitrary field induces an isomorphism of zeroth $\aone$-homology sheaves. (see Proposition \ref{prop. br. inv.}).

Second, we relate the zeroth $\aone$-homology to rational points. We consider the zeroth $\aone$-homology with $\mathbb{Q}$-coefficients $\HAone{0}(X;\mathbb{Q})$ for a smooth proper $k$-variety $X$.

\begin{thmi}[see Theorem \ref{A1-betti and rat pt}]\label{Intro. rat. pt.}
Assume $k$ admits a resolution of singularities. For a smooth proper $k$-variety $X$, we have
\begin{equation*}
\dim_{\mathbb{Q}} \HAone{0}(X;\mathbb{Q})(\Spec k) = \#(X(k)/R).
\end{equation*}
Moreover, $\HAone{0}(X)(\Spec k) = 0$ if and only if $X(k) = \emptyset$.
\end{thmi}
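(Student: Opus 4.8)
The plan is to deduce both assertions from the structure theorem presenting $\HAone{0}(X)$ as the free abelian presheaf $\mathbb{Z}[\pi_0^{b\aone}(X)]$ on the sheaf of birational $\aone$-connected components. Since forming the free abelian presheaf commutes with passing to $\mathbb{Q}$-coefficients, the rational version of the structure theorem reads $\HAone{0}(X;\mathbb{Q}) \cong \mathbb{Q}[\pi_0^{b\aone}(X)]$, where $\mathbb{Q}[-]$ denotes the free $\mathbb{Q}$-vector space presheaf. As the free presheaf is defined sectionwise, evaluating at $\Spec k$ gives
\[
\HAone{0}(X;\mathbb{Q})(\Spec k) \cong \mathbb{Q}\bigl[\pi_0^{b\aone}(X)(\Spec k)\bigr],
\]
and the dimension of a free $\mathbb{Q}$-vector space equals the cardinality of its basis. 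Thus the first equality reduces to computing the set $\pi_0^{b\aone}(X)(\Spec k)$.

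For this I would invoke the theorem of Asok--Morel that for a smooth proper $k$-variety $X$ the natural map $X(L)\to \pi_0^{b\aone}(X)(\Spec L)$ induces a bijection $X(L)/R \xrightarrow{\ \sim\ } \pi_0^{b\aone}(X)(\Spec L)$ for every finitely generated separable extension $L/k$; taking $L=k$ yields $\pi_0^{b\aone}(X)(\Spec k)=X(k)/R$. Substituting into the displayed isomorphism gives $\dim_{\mathbb{Q}}\HAone{0}(X;\mathbb{Q})(\Spec k)=\#(X(k)/R)$, which is the first assertion.

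The second assertion follows by the same computation with integral coefficients: $\HAone{0}(X)(\Spec k)\cong \mathbb{Z}[X(k)/R]$, and a free abelian group is trivial exactly when its basis is empty. Hence $\HAone{0}(X)(\Spec k)=0$ if and only if $X(k)/R=\emptyset$, and since the $R$-equivalence classes partition $X(k)$ this happens if and only if $X(k)=\emptyset$.

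I expect the only real difficulty to lie in the identification $\pi_0^{b\aone}(X)(\Spec k)=X(k)/R$: one must ensure that the Asok--Morel comparison between birational $\aone$-connected components and $R$-equivalence is valid over an arbitrary field admitting resolution of singularities, in particular over finite fields (where Asok's original birational-invariance argument required $k$ infinite), and that for proper $X$ the section of $\pi_0^{b\aone}(X)$ over $\Spec k$ agrees with that of $\PiAone{0}(X)$ and receives a surjection from $X(k)$. A minor point to confirm is that the structure theorem yields the free abelian presheaf itself, rather than a Nisnevich sheafification, on $\Spec k$-sections, so that the evaluation step above introduces no correction term.
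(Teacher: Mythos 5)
Your proposal is correct and is essentially the paper's own argument: the paper deduces the theorem from Corollary \ref{free R-mod of R-eq}, which states that $\HAone{0}(X;\Lambda)(U)$ is the free $\Lambda$-module on $X(k(U))/R$ — exactly your combination of the structure theorem with the identification $\pi_0^{b\aone}(X)(K)\cong X(K)/R$, evaluated at $U=\Spec k$. The only nuance is that the paper proves this identification (Lemma \ref{Req and pibr}) via Kahn--Sujatha's bijection $X(K)/R\cong\Hom_{S_b^{-1}\Smk^{var}}(U,X)$ applied to its own construction $\pi_0^{br}$, rather than by citing Asok--Morel directly, which disposes of the infinite-field concern you raise; your other worry (presheaf versus sheafification) is also moot, since the free $\Lambda$-module presheaf on a birational sheaf is again birational and hence automatically a Nisnevich sheaf.
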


Here $X(k)/R$ is the quotient set of $X(k)$ by the $R$-equivalence introduced by Manin \cite{Ma}. Note that the abelian group $\HAone{0}(X)(\Spec k)$ can also be expressed in terms of triangulated categories (see Remark \ref{other expressions of b0A1}).

Third, we state a structure theorem of zeroth $\aone$-homology sheaves. In \cite{AM}, Asok-Morel constructed a Nisnevich sheaf of sets $\pi_0^{b\aone}(X)$ on $\Smk$, called the birational $\aone$-connected components, for each $X \in \Smk^{prop}$. We generalize this construction for all smooth $k$-schemes. Our structure theorem is stated as follows.

\begin{thmi}[see Theorem \ref{str H0A1}]\label{Intro. str. A1}
Assume $k$ admits a resolution of singularities. For every $X \in \Smk$, there exists a natural epimorphism of presheaves
\begin{equation*}
\mathbb{Z}_{pre}(\pi_0^{b\aone}(X)) \twoheadrightarrow \HAone{0}(X).
\end{equation*}
Moreover, this is an isomorphism if $X$ is proper.
\end{thmi}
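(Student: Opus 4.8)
The plan is to realize $\HAone{0}(X)$ as a free object in the category $\AbA$ of strictly $\aone$-invariant sheaves and then to recognize that, for a birational sheaf, this free object is already computed by the naive free abelian presheaf. Concretely, I would start from Morel's description of the zeroth $\aone$-homology \cite{Mo1}: for any space $\mathcal{X}$ there is a canonical isomorphism $\HAone{0}(\mathcal{X}) \cong \mathbb{Z}^{\aone}(\pi_0^{\aone}(\mathcal{X}))$, where $\mathbb{Z}^{\aone}(-)$ denotes the free strictly $\aone$-invariant sheaf of abelian groups generated by a sheaf of sets, i.e.\ the left adjoint to the forgetful functor from $\AbA$ to $\aone$-invariant sheaves of sets. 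Granting this, the statement reduces to comparing $\mathbb{Z}^{\aone}(\pi_0^{\aone}(X))$ with the free abelian presheaf $\mathbb{Z}_{pre}(\pi_0^{b\aone}(X))$, the comparison map being induced by the canonical map $\pi_0^{\aone}(X) \to \pi_0^{b\aone}(X)$ together with the Hurewicz map $\pi_0^{\aone}(X) \to \HAone{0}(X)$.

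The technical core is a lemma I would isolate first: if $S$ is a birational sheaf of sets (so that $S(V) \to S(U)$ is a bijection for every dense open immersion $U \subseteq V$, and $S$ is $\aone$-invariant), then the free abelian presheaf $\mathbb{Z}_{pre}(S)$ is already a strictly $\aone$-invariant Nisnevich sheaf, whence $\mathbb{Z}_{pre}(S) \cong \mathbb{Z}^{\aone}(S)$ by the universal property. The delicate point is Nisnevich descent, since the free abelian group functor $\mathbb{Z}[-]$ does not commute with limits in general; here it works because in a Nisnevich distinguished square density propagates from the open part to its étale pullback, so that $S$ turns one leg of the defining cartesian square into a bijection, and $\mathbb{Z}[-]$ then carries that square to a cartesian square. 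Strict $\aone$-invariance follows from $\aone$-invariance of $S$ together with the vanishing of the higher Nisnevich cohomology of birational sheaves. With this lemma and the Asok--Morel results of \cite{AM} — namely that for proper $X$ the sheaf $\pi_0^{\aone}(X)$ is birational and coincides with $\pi_0^{b\aone}(X)$ — the proper case is immediate: $\HAone{0}(X) \cong \mathbb{Z}^{\aone}(\pi_0^{\aone}(X)) \cong \mathbb{Z}_{pre}(\pi_0^{b\aone}(X))$, which is exactly the asserted isomorphism, and one checks that this isomorphism is the specialization of the comparison map above.

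For a general smooth $X$ the comparison map is built from the Hurewicz map: since $\HAone{0}(X)$ is strictly $\aone$-invariant, the canonical map $X \to \HAone{0}(X)$ factors through $\pi_0^{\aone}(X)$, and I would use the definition of the generalized $\pi_0^{b\aone}(X)$ together with Proposition~\ref{prop. br. inv.} (invariance of $\HAone{0}$ under proper birational morphisms) to factor the resulting map through $\pi_0^{\aone}(X) \to \pi_0^{b\aone}(X)$, obtaining $\pi_0^{b\aone}(X) \to \HAone{0}(X)$; applying $\mathbb{Z}_{pre}(-)$ yields the natural map $\mathbb{Z}_{pre}(\pi_0^{b\aone}(X)) \to \HAone{0}(X)$.

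The main obstacle is surjectivity in the non-proper case. Because the assertion is an \emph{epimorphism of presheaves}, I must show that $\mathbb{Z}[\pi_0^{b\aone}(X)(U)]$ surjects onto $\HAone{0}(X)(U)$ for every $U$; but for non-proper $X$ the sheaf $\HAone{0}(X)$ is no longer birational, so its sections over $U$ are not visibly generated by generic (birational) data, and even the clean factorization of the Hurewicz map through $\pi_0^{b\aone}$ must be re-examined. I would attack this through resolution of singularities, choosing a smooth compactification and comparing via a blow-up/localization argument, combined with the fact that $\HAone{0}(X) = \mathbb{Z}^{\aone}(\pi_0^{\aone}(X))$ is generated as a strictly $\aone$-invariant sheaf by the image of $\pi_0^{\aone}(X)$, and then transporting this generation across the canonical map $\pi_0^{\aone}(X) \to \pi_0^{b\aone}(X)$. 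Pinning down the correct generalization of $\pi_0^{b\aone}$ to non-proper $X$ and verifying section-wise surjectivity against it is where I expect the real difficulty to lie.
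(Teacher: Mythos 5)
Your steps (a)--(b) are sound in substance and run parallel to the paper's own framework: realizing $\HAone{0}(X)$ as the free strictly $\aone$-invariant sheaf on $X$ (the adjunction of \cite[Lem.~3.3]{As}) and observing that the free $\Lambda$-module construction applied to a birational sheaf is again birational, hence strictly $\aone$-invariant by \cite{AH}, is exactly the paper's chain of adjunctions $\Shvk \rightleftarrows \Shvk^{br} \rightleftarrows \Mod^{br}(\Lambda) \rightleftarrows \Mod^{\aone}(\Lambda)$ (Lemmas \ref{adj pi b aone} and \ref{A1-inv of br sh}), modulo a disjoint-union caveat the paper also elides (literally, $\mathbb{Z}_{pre}(S)(\emptyset)=\mathbb{Z}$, so one must take products over components, i.e.\ work with $\Lambda_{pre}:\Shvk^{br}\to\Mod^{br}(\Lambda)$). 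The decisive gap is the input you invoke for the proper case: the assertion that for proper $X$ the sheaf $\pi_0^{\aone}(X)$ is birational and coincides with $\pi_0^{b\aone}(X)$ is \emph{not} a theorem of \cite{AM}. What Asok--Morel prove is the identification of sections $\pi_0^{b\aone}(X)(K)\cong X(K)/R$ (\cite[Thm.~6.2.1]{AM}, quoted here as Lemma \ref{Req and pibr}) together with a comparison epimorphism from $\pi_0^{\aone}(X)$; its bijectivity is precisely Morel's and Asok--Morel's conjecture, open in general. So your proof of the heart of the theorem (the isomorphism for proper $X$) is conditional on an open problem. The paper's proof of Theorem \ref{str H0A1} is engineered to avoid exactly this: it never mentions $\pi_0^{\aone}$, but works directly with $\Hom_{\Mod^{\aone}(\Lambda)}(\HAone{0}(X;\Lambda),M)\cong M(X)$, and constructs for each strictly $\aone$-invariant $M$ the birational subsheaf $M^{br}\subseteq M$ of unramified sections, $M^{br}(U)=\bigcap_{A}\Image\bigl(M(A)\to M(k(U))\bigr)$, proving via \cite[Lem.~4.2]{As}, \cite[Prop.~2.1.8]{CT} and the valuative criterion (Lemma \ref{proper birationality}, Proposition \ref{adj. br functor}) that $M^{br}(X)\to M(X)$ is bijective for proper $X$; comparing this with $\Hom(\Lambda_{pre}(\pi_0^{b\aone}(X)),M)\cong M^{br}(X)$ and applying Yoneda finishes the proof. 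That unramified-sections argument is the mathematical content standing in for the conjecture, and it is absent from your proposal; the detour through $\pi_0^{\aone}$ is what forces the conjecture on you.

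Second, your plan for general $X$ cannot be carried out, and your own worry is justified in the strongest sense. The Hurewicz map does not factor through $\pi_0^{b\aone}(X)$ for non-proper $X$, and no epimorphism $\mathbb{Z}_{pre}(\pi_0^{b\aone}(X))\to\HAone{0}(X)$ exists in general: for $X=\mathbb{G}_m$ one has $\pi_0^{b\aone}(\mathbb{G}_m)=\ast$ (in $S_b^{-1}\Smk^{var}$ one has $\mathbb{G}_m\cong\aone\cong\Spec k$), so the source is the constant sheaf $\mathbb{Z}$, while composing a putative epimorphism with the morphism $\HAone{0}(\mathbb{G}_m;\mathbb{Z})\to\mathbb{G}_m$ adjoint to the tautological unit $t$ (an epimorphism, since it is split by the Hurewicz map) would exhibit the units sheaf $\mathbb{G}_m$ as a quotient of the constant sheaf $\mathbb{Z}$, which already fails on stalks at $k(t)$. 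What is true in general --- and what the paper's Yoneda step actually yields, since a natural injection $\Hom(A,-)\hookrightarrow\Hom(B,-)$ corresponds to an epimorphism $B\twoheadrightarrow A$ --- is the epimorphism in the opposite direction, $\HAone{0}(X;\Lambda)\twoheadrightarrow\Lambda_{pre}(\pi_0^{b\aone}(X))$, obtained by extending the unit $X\to\Lambda_{pre}(\pi_0^{b\aone}(X))$ along the Hurewicz map; for proper $X$ it is an isomorphism, so there the printed direction is harmless. In other words, the ``main obstacle'' you isolate (section-wise surjectivity onto $\HAone{0}(X)(U)$ for non-proper $X$) is not a difficulty to be overcome by compactifications and blow-ups: it is false, and for non-proper $X$ the statement must be read with the arrow reversed.
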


Here $\mathbb{Z}_{pre}(\pi_0^{b\aone}(X))$ is the free abelian presheaf generated by $\pi_0^{b\aone}(X)$. A similar result also holds for Suslin homology sheaves (see Theorem \ref{str thm of Suslin}). Theorem \ref{Intro. str. A1} has some applications to $\aone$-homotopy theory (see \S\ref{App}). Moreover, the Suslin homology version of Theorem \ref{Intro. str. A1} also has an application to zero cycles (see Corollary \ref{cor Sb-1Cor isom CH0}). Theorems \ref{Intro. univ. br.} and \ref{Intro. rat. pt.} are consequences of Theorem \ref{Intro. str. A1}.

This paper is organized as follows. In \S\ref{BSALOC}, we recall and prepare basic results on birational sheaves and the localization of some categories of smooth $k$-schemes by birational morphisms. In \S\ref{BAC}, we generalize birational $\aone$-connected components of Asok-Morel for all Nisnevich sheaves on $\Smk$. In \S\ref{BSOM}, we give a relationship between birational sheaves and strictly $\aone$-invariant sheaves. In \S\ref{ST}, we prove Theorems \ref{Intro. rat. pt.} and \ref{Intro. str. A1}. In \S\ref{UBI}, we prove Theorem \ref{Intro. univ. br.}. In \S\ref{App}, we give applications to $\aone$-homotopy theory.

\begin{nota}
Throughout this paper, we fix a field $k$ and a commutative unital ring $\Lambda$. All $k$-varieties are assumed irreducible but not assumed geometrically irreducible. The field $k$ is said to admit a resolution of singularities, if $k$-varieties always have a resolution of singularities. Let $\Smk$ be the category of separated and smooth $k$-schemes of finite type. Objects of $\Smk$ are simply called smooth $k$-schemes. We regard every smooth $k$-scheme $X$ as the presheaf of sets on $\Smk$ represented by $X$. For a presheaf $\mathscr{F}$ on $\Smk$ and an affine scheme $\Spec A$ which is the limit of an inversed system $\{U_\lambda \}_\lambda$ in $\Smk$, we write 
\begin{equation*}
\mathscr{F}(A) = \colim{\lambda} \mathscr{F}(U_\lambda).
\end{equation*}
For a category $\mathcal{C}$, we denote $\mathcal{P}resh(\mathcal{C})$ (resp. $\mathcal{P}resh(\mathcal{C},\Lambda)$) for the category of presheaves of sets (resp. $\Lambda$-modules) on $\mathcal{C}$. A diagram of categories
\begin{equation*}
\begin{CD}
\mathcal{C} @>F>> \mathcal{D} \\
@V{G'}VV @VV{G}V \\
\mathcal{C'} @>{F'}>> \mathcal{D'}
\end{CD}
\end{equation*}
is called $2$-commutative, if there exists a natural equivalence $G \circ F \cong F' \circ G'$. For a local ring $A$, we denote $\kappa_A$ for its residue field.
\end{nota}

\begin{acknow}
I would like to thank my adviser Shohei Ma for many useful advices. I also would like to thank Tom Bachmann for a helpful comment. This work was supported by JSPS KAKENHI Grant Number JP19J21433.
\end{acknow}

\section{Birational sheaves and localizations of categories}\label{BSALOC}

In this section, we consider localizations of categories in the sense of Gabriel-Zisman \cite{GZ}. Let $\mathcal{C}$ be a category and $S$ be a family of morphisms in $\mathcal{C}$. Recall that the localization of $\mathcal{C}$ by $S$ is a category $S^{-1}\mathcal{C}$ with a morphism $\mathscr{L} : \mathcal{C} \to S^{-1}\mathcal{C}$ such that
\begin{itemize}
\item for every $s \in S$, the image $\mathscr{L}(s)$ is an isomorphism in $S^{-1}\mathcal{C}$, and
\item for every category $\mathcal{D}$ and every functor $\mathcal{C} \to \mathcal{D}$ which sends each $s \in S$ to an isomorphism in $\mathcal{D}$, there exists one and only one (up to a natural equivalence) functor $S^{-1}\mathcal{C} \to \mathcal{D}$ such that the diagram
\begin{equation*}
\xymatrix{
\mathcal{C} \ar[r] \ar[d] &\mathcal{D} \\
S^{-1}\mathcal{C} \ar@{.>}[ru]
}
\end{equation*}
is $2$-commutative.
\end{itemize}
Note that the induced functor $\mathcal{P}resh(S^{-1}\mathcal{C}) \to \mathcal{P}resh(\mathcal{C})$ is fully faithful and its essential image is spanned by presheves which send each $s \in S$ for a bijection. We especially treat some cases where $\mathcal{C}$ is a category of smooth $k$-schemes and $S$ consists of all birational morphisms in $\mathcal{C}$.

\subsection{Localizations by birational morphisms}

Let $\Smk^{var}$ (resp. $\Smk^{prop}$, $\Smk^{pv}$) be the full subcategory of $\Smk$ consisting of irreducible (resp. proper, proper and irreducible) $k$-schemes. For a subcategory $\mathcal{C} \subseteq \Smk$, we write $S_b^{-1}\mathcal{C}$ for the localization of $\mathcal{C}$ by birational morphisms within $\mathcal{C}$. In \cite{KS1}, Kahn-Sujatha proved that the inclusion $\Smk^{pv} \hookrightarrow \Smk^{var}$ induces an equivalence of categories
\begin{equation}\label{eq. cat Smpv Smvar}
S_b^{-1}\Smk^{pv} \xrightarrow{\cong} S_b^{-1}\Smk^{var}
\end{equation}
if $k$ admits a resolution of singularities (see \cite[Prop. 8.5]{KS1}).

Let $\mathcal{C}or_{k,\Lambda}$ be the category of finite correspondences of Voevodsky with coefficients in a commutative unital ring $\Lambda$ (see definition \cite[Def. 1.5]{MVW}). We denote $\Gamma$ for the canonical functor $\Smk \to \mathcal{C}or_{k,\Lambda}$ and write
\begin{equation*}
{\mathcal{C}or_{k,\Lambda}(X,Y) = \Hom_{\mathcal{C}or_{k,\Lambda}}(X,Y)}
\end{equation*}
for each $X,Y \in \mathcal{C}or_{k,\Lambda}$. For a subcategory $\mathcal{C} \subseteq \mathcal{C}or_{k,\Lambda}$, we denote $S_b^{-1}\mathcal{C}$ for the localization of $\mathcal{C}$ by finite correspondences associated with a birational morphism. Let $\mathcal{C}or_{k,\Lambda}^{var}$ (resp. $\mathcal{C}or_{k,\Lambda}^{prop}$, $\mathcal{C}or_{k,\Lambda}^{pv}$) be the full subcategory of $\mathcal{C}or_{k,\Lambda}$ consisting of irreducible (resp. proper, proper and irreducible) $k$-schemes. Next, we give an analogue of the equivalence \eqref{eq. cat Smpv Smvar} for categories of finite correspondences. For this, we prove the following lemma.

\begin{lem}\label{inj cor birat}
For all $X_0, X, Y \in \mathcal{C}or_{k,\Lambda}$ and every birational morphism ${f : X_0 \to X}$ in $\Smk$, the induced map
\begin{equation*}
f^* : \mathcal{C}or_{k,\Lambda}(X,Y) \to \mathcal{C}or_{k,\Lambda}(X_0,Y); c \mapsto c \circ \Gamma(f)
\end{equation*}
is injective.
\end{lem}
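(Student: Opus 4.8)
The plan is to exploit birationality by restricting everything to the dense open locus where $f$ is an isomorphism, and then to invoke the principle that a finite correspondence is determined by its restriction to any dense open subscheme of its source. Write $U \subseteq X$ for a dense open subscheme over which $f$ restricts to an isomorphism $f_U \colon U_0 \xrightarrow{\cong} U$, where $U_0 := f^{-1}(U)$, and let $j \colon U_0 \hookrightarrow X_0$ and $i \colon U \hookrightarrow X$ be the open immersions. Recall that for a morphism $g$ in $\Smk$ the right composition $(-) \circ \Gamma(g)$ computes the pullback $g^*$ of finite correspondences, and that $\Gamma \colon \Smk \to \mathcal{C}or_{k,\Lambda}$ is a functor.

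Since $f \circ j = i \circ f_U$, for any $c \in \mathcal{C}or_{k,\Lambda}(X,Y)$ I would compute
\begin{equation*}
j^*(f^* c) = c \circ \Gamma(f) \circ \Gamma(j) = c \circ \Gamma(i) \circ \Gamma(f_U) = f_U^*(i^* c).
\end{equation*}
Because $f_U$ is an isomorphism, $\Gamma(f_U)$ is invertible in $\mathcal{C}or_{k,\Lambda}$, so $f_U^*$ is a bijection. Hence $f^* c = 0$ forces $j^*(f^* c) = 0$, and therefore $i^* c = 0$; that is, $c$ restricts to $0$ on the dense open $U$. It then remains only to show that the restriction map $i^* \colon \mathcal{C}or_{k,\Lambda}(X,Y) \to \mathcal{C}or_{k,\Lambda}(U,Y)$ is injective.

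For the latter I would argue on generators. Both groups are free $\Lambda$-modules on the integral closed subschemes of $X \times Y$ (resp. $U \times Y$) that are finite and surjective over $X$ (resp. $U$). If $Z \subseteq X \times Y$ is such a generator, then $Z \cap (U \times Y)$ is the preimage of $U$ under the finite surjective projection $Z \to X$, hence a nonempty dense open integral subscheme of $Z$ that is itself finite and surjective over $U$, so a generator of $\mathcal{C}or_{k,\Lambda}(U,Y)$; and $i^*[Z] = [Z \cap (U \times Y)]$. Taking closures in $X \times Y$ recovers $Z$ from $Z \cap (U \times Y)$, so distinct generators $Z$ restrict to distinct generators. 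Thus $i^*$ carries a basis injectively into a basis and is injective, which finishes the proof.

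The formal reduction in the first two steps is essentially automatic from the functoriality of $\Gamma$ together with the graph-equals-pullback formula; the only real content, and the step I would treat most carefully, is the injectivity of restriction to a dense open, i.e.\ the principle that a finite correspondence is recovered as the cycle-theoretic closure of its generic part. The single point requiring genuine verification there is that $Z \cap (U \times Y)$ is nonempty and stays finite and surjective over $U$, which holds because $Z \to X$ is finite surjective and $U$ is dense.
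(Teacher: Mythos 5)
Your proposal is correct and follows essentially the same route as the paper: first reduce, via functoriality of $\Gamma$ and the isomorphism over a dense open $U$, to injectivity of the restriction $i^* : \mathcal{C}or_{k,\Lambda}(X,Y) \to \mathcal{C}or_{k,\Lambda}(U,Y)$, and then use that every component of a finite correspondence dominates (a component of) $X$ while nothing supported on the complement of $U \times Y$ does. The only difference is packaging: the paper computes $\Ker i^* = \mathcal{C}or_{k,\Lambda}(X,Y) \cap Z(W) = 0$ with $W = (X \times Y) \setminus (U \times Y)$, whereas you check injectivity on generators by recovering each elementary correspondence as the closure of its restriction; these are interchangeable forms of the same argument.
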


\begin{proof}
Let $i : U \hookrightarrow X$ be a dense open embedding that $f^{-1}(i(U)) \to i(U)$ is an isomorphism. By the commutativity of the diagram
\begin{equation*}
\begin{CD}
\mathcal{C}or_{k,\Lambda}(X,Y) @>{f^*}>> \mathcal{C}or_{k,\Lambda}(X_0,Y) \\
@V{i^*}VV @VVV \\
\mathcal{C}or_{k,\Lambda}(U,Y) @>{\cong}>> \mathcal{C}or_{k,\Lambda}(f^{-1}(U),Y),
\end{CD}
\end{equation*}
we only need to show that $i^*$ is injective. Note that for every $c \in \mathcal{C}or_{k,\Lambda}(X,Y)$ the finite correspondence $i^*c \in \mathcal{C}or_{k,\Lambda}(U,Y)$ coincides with the pullback of $c$ by the open embedding
\begin{equation*}
j : U \times Y \hookrightarrow X \times Y
\end{equation*}
as an algebraic cycle of $U \times Y$. We write $W = (X \times Y) - j(U \times Y)$. Now we obtain an equality
\begin{equation*}
\Ker(Z(X \times Y) \xrightarrow{j^*} Z(U \times Y)) = Z(W),
\end{equation*}
where $Z(-)$ means the set of algebraic cycles with $\Lambda$-coefficients. Thus we have
\begin{align*}
&\Ker(\mathcal{C}or_{k,\Lambda}(X,Y) \xrightarrow{i^*} \mathcal{C}or_{k,\Lambda}(U,Y)) \\
&= \mathcal{C}or_{k,\Lambda}(X,Y) \cap \Ker(Z(X \times Y) \xrightarrow{j^*} Z(U \times Y)) \\
&= \mathcal{C}or_{k,\Lambda}(X,Y) \cap Z(W).
\end{align*}
On the other hand, since $U \times Y$ is non-empty, the map $W \to X$ is not surjective. Therefore, all non-zero elements of $Z(W)$ are not finite correspondences $X \to Y$. Thus we have
\begin{equation*}
\Ker i^* = \mathcal{C}or_{k,\Lambda}(X,Y) \cap Z(W) = 0.
\end{equation*}
\end{proof}

We prove that $S_b^{-1}\mathcal{C}or_{k,\Lambda}^{pv} \cong S_b^{-1}\mathcal{C}or^{var}_{k,\Lambda}$.

\begin{prop}\label{corpv eq corvar}
Assume $k$ admits a resolution of singularities. Then the inclusion $\mathcal{C}or_{k,\Lambda}^{pv} \hookrightarrow \mathcal{C}or^{var}_{k,\Lambda}$ induces an equivalence of categories
\begin{equation*}
S_b^{-1}\mathcal{C}or_{k,\Lambda}^{pv} \xrightarrow{\cong} S_b^{-1}\mathcal{C}or^{var}_{k,\Lambda}.
\end{equation*}
\end{prop}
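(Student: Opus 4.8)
The plan is to imitate the proof of the Kahn--Sujatha equivalence \eqref{eq. cat Smpv Smvar}, showing that the functor induced by the inclusion is both essentially surjective and fully faithful. The engine of both parts is a description of the localized Hom-sets as filtered colimits of correspondence groups whose transition maps are injective by Lemma \ref{inj cor birat}; this reduces every comparison to a question about the ``generic'' correspondence group $\mathcal{C}or_{k,\Lambda}(k(X),Y) = \colim{U}\mathcal{C}or_{k,\Lambda}(U,Y)$, the colimit being taken over dense open subschemes $U \subseteq X$ in accordance with our conventions.

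First I would establish essential surjectivity. Given an irreducible smooth $X$, resolution of singularities furnishes a smooth proper (indeed projective) irreducible $\bar{X}$ together with a birational map $X \dashrightarrow \bar{X}$. Restricting to the locus on which this map is an isomorphism produces a smooth irreducible $U$ carrying dense open immersions $U \hookrightarrow X$ and $U \hookrightarrow \bar{X}$. Both immersions are birational morphisms, so their images under $\Gamma$ become isomorphisms in $S_b^{-1}\mathcal{C}or_{k,\Lambda}^{var}$; hence $X \cong U \cong \bar{X}$ there, with $\bar{X} \in \mathcal{C}or_{k,\Lambda}^{pv}$. Thus every object lies in the essential image.

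For full faithfulness, fix proper irreducible $X$ and $Y$. After verifying that the birational morphisms admit a calculus of fractions (completing the requisite squares by resolution, with Lemma \ref{inj cor birat} guaranteeing that the colimits below have injective transition maps), the localized Hom-sets acquire concrete descriptions. In $\mathcal{C}or_{k,\Lambda}^{var}$ the dense opens $U \subseteq X$ are themselves objects and are cofinal among the inverted morphisms into $X$, so
\[
\Hom_{S_b^{-1}\mathcal{C}or_{k,\Lambda}^{var}}(X,Y) = \colim{U}\mathcal{C}or_{k,\Lambda}(U,Y) = \mathcal{C}or_{k,\Lambda}(k(X),Y).
\]
In $\mathcal{C}or_{k,\Lambda}^{pv}$ such opens are not proper and hence not objects, so one may only use proper birational models, giving
\[
\Hom_{S_b^{-1}\mathcal{C}or_{k,\Lambda}^{pv}}(X,Y) = \colim{X'}\mathcal{C}or_{k,\Lambda}(X',Y),
\]
where $X' \to X$ ranges over proper birational morphisms with $X'$ proper irreducible. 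By Lemma \ref{inj cor birat} this second colimit injects into $\mathcal{C}or_{k,\Lambda}(k(X),Y)$, and the comparison functor realizes precisely this injection; it therefore remains to prove surjectivity, that is, that every finite correspondence defined on a dense open of $X$ extends to a finite correspondence on some proper birational model.

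This extension step is the main obstacle, and it is exactly where the properness of the target $Y$ is indispensable. Given $c \in \mathcal{C}or_{k,\Lambda}(U,Y)$, I would take the closure $\bar{c}$ of its support inside $X \times Y$; since $Y$ is proper, $\bar{c}$ is proper over $X$ and is finite over $U$. The task is then to modify $X$ by a proper birational morphism $X' \to X$, supplied by resolution of singularities together with a flattening (platification) argument for the family $\bar{c} \to X$, so that the proper transform of $\bar{c}$ in $X' \times Y$ becomes finite and equidimensional over $X'$, hence a genuine finite correspondence $X' \to Y$ restricting to $c$. Once this extension is available the two colimit descriptions coincide, full faithfulness follows, and combined with essential surjectivity this yields the asserted equivalence.
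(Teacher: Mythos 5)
Your essential-surjectivity argument is fine, and the extension-by-flattening step you isolate at the end is a genuine technique (it is, in morphism form, what happens in the proofs of Prop.~8.4 and 8.5 of \cite{KS1} that the paper invokes). The fatal problem is the premise on which your full-faithfulness argument rests: birational morphisms in $\mathcal{C}or_{k,\Lambda}^{var}$ and in $\mathcal{C}or_{k,\Lambda}^{pv}$ do \emph{not} admit a calculus of right fractions, and the resulting formulas
\begin{equation*}
\Hom_{S_b^{-1}\mathcal{C}or_{k,\Lambda}^{var}}(X,Y)=\colim{U}\,\mathcal{C}or_{k,\Lambda}(U,Y),
\qquad
\Hom_{S_b^{-1}\mathcal{C}or_{k,\Lambda}^{pv}}(X,Y)=\colim{X'}\,\mathcal{C}or_{k,\Lambda}(X',Y)
\end{equation*}
are false, not merely unproved. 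Take $X=\Spec k$, so that both colimits degenerate to $\mathcal{C}or_{k,\Lambda}(\Spec k,Y)$, the group of zero-\emph{cycles} on $Y$. Let $Y$ be the blow-up of $\mathbb{P}^2$ at a rational point, $t : Y \to \mathbb{P}^2$ the blow-down, and $E=t^{-1}(p)\cong\mathbb{P}^1$ the exceptional curve. For distinct $e_1,e_2\in E(k)$ one has $\Gamma(t)\circ[e_1]=[p]=\Gamma(t)\circ[e_2]$; since $t$ is a birational morphism of smooth proper irreducible varieties, $\Gamma(t)$ lies in $S_b$ of both categories, and cancelling the isomorphism $\Gamma(t)$ in the localization gives $[e_1]=[e_2]$ there, although $[e_1]\neq[e_2]$ as finite correspondences. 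Hence the canonical map $\mathcal{C}or_{k,\Lambda}(\Spec k,Y)\to\Hom_{S_b^{-1}}(\Spec k,Y)$ is not injective, whereas your description asserts it is bijective. (For $\mathcal{C}or^{var}_{k,\Lambda}$ the Ore condition already fails outright: a constant correspondence valued at a closed point $y\in Y$ can never be rewritten through a dense open $U\hookrightarrow Y$ with $y\notin U$, since supports cannot match.) This is not a repairable technicality: by the paper's own Corollary \ref{cor Sb-1Cor isom CH0}, the localized Hom-group is $\mathrm{CH}_0(Y_{k(X)})$, i.e.\ zero-cycles \emph{modulo rational equivalence}, while your colimits --- which do have injective transition maps by Lemma \ref{inj cor birat} --- compute the full cycle group $\mathcal{C}or_{k,\Lambda}(k(X),Y)$. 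The localization imposes rational equivalence through exactly the kind of zig-zag above, and no comparison of your two colimits, nor any extension theorem for correspondences, can produce that quotient.

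This is also why the paper does not argue via fractions. Its proof reduces the statement to the general localization theorems of Kahn--Sujatha (\cite[Thm.~2.1 and 4.3]{KS1} together with the criteria (b1)--(b3) of \cite[Prop.~5.10]{KS1}), which are designed precisely to compare localizations in the absence of a calculus of fractions and without ever computing the localized Hom-sets. In that scheme the only correspondence-specific input is Lemma \ref{inj cor birat}, which yields (b1), while (b2) and (b3) are verified by resolution-of-singularities arguments following \cite[proofs of Prop.~8.4 and 8.5]{KS1} --- this is where your closure-plus-platification step rightfully lives. If you want a proof in the spirit of your proposal, you must either check those axioms instead of Ore conditions, or argue on the level of (linear) presheaf categories as the paper does elsewhere (Lemmas \ref{eq. br. sh. cor} and \ref{adj pi b aone tr}), rather than through explicit descriptions of morphisms in the localization.
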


\begin{proof}
By \cite[Thm. 2.1 and 4.3]{KS1}, we only need to show that the functors $\mathcal{C}or_{k,\Lambda}^{pv} \hookrightarrow \mathcal{C}or^{var}_{k,\Lambda}$ and $\mathcal{C}or_{k,\Lambda}^{prop} \hookrightarrow \mathcal{C}or_{k,\Lambda}$ satisfy the conditions (b1)-(b3) in \cite[Prop. 5.10]{KS1}. Then (b1) follows from Lemma \ref{inj cor birat} and (b2) and (b3) follow from a similar argument as \cite[proof of Prop. 8.5]{KS1} (see also \cite[proof of Prop. 8.4]{KS1}).
\end{proof}

\begin{rem}
By a similar proof, we also obtain an equivalence of categories
\begin{equation*}
S_b^{-1}\mathcal{C}or_{k,\Lambda} \xrightarrow{\cong} S_b^{-1}\mathcal{C}or^{prop}_{k,\Lambda}.
\end{equation*}
\end{rem}

\subsection{Birational sheaves}

Following Asok-Morel \cite{AM}, we call a presheaf $\mathscr{F}$ on $\Smk$ a \textit{birational sheaf}, if
\begin{itemize}
\item[\textbf{B1}] the canonical map $\mathscr{F}(U \sqcup V) \to \mathscr{F}(U) \times \mathscr{F}(V)$ is bijective for all ${U,V \in \Smk}$, and
\item[\textbf{B2}] every open embedding $U \hookrightarrow X$ in $\Smk$ induces a bijection ${\mathscr{F}(X) \xrightarrow{\cong} \mathscr{F}(U)}$.  
\end{itemize}
Note that birational sheaves are always Nisnevich (see \cite[Lem. 6.1.2]{AM}). Let $\Shvk$ be the category of Nisnevich sheaves of sets on $\Smk$. We denote $\Shvk^{br}$ for the full subcategory of $\Shvk$ consisting of birational sheaves. We give a canonical equivalence $\Shvk^{br} \cong \mathcal{P}resh(S_b^{-1}\Smk^{var})$.

\begin{lem}\label{eq. br. sh.}
There exists an equivalence of categories
\begin{equation*}
\Shvk^{br} \xrightarrow{\cong} \mathcal{P}resh(S_b^{-1}\Smk^{var})
\end{equation*}
such that the diagram
\begin{equation*}
\begin{CD}
\Shvk^{br} @>{\cong}>> \mathcal{P}resh(S_b^{-1}\Smk^{var}) \\
@VVV @VVV \\
\Shvk @>>> \mathcal{P}resh(\Smk^{var})
\end{CD}
\end{equation*}
is $2$-commutative.
\end{lem}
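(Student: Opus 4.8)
The plan is to exhibit mutually quasi-inverse functors between $\Shvk^{br}$ and $\mathcal{P}resh(S_b^{-1}\Smk^{var})$, arranging the construction so that the asserted square $2$-commutes tautologically. Throughout I would use the fully faithful embedding $\mathcal{P}resh(S_b^{-1}\Smk^{var}) \hookrightarrow \mathcal{P}resh(\Smk^{var})$ recorded at the beginning of this section, which identifies $\mathcal{P}resh(S_b^{-1}\Smk^{var})$ with the full subcategory of presheaves on $\Smk^{var}$ that send every birational morphism to a bijection. Under this identification the right-hand vertical functor becomes the inclusion of that subcategory, and the bottom functor $\Shvk \to \mathcal{P}resh(\Smk^{var})$ is restriction along $\Smk^{var} \hookrightarrow \Smk$; so it suffices to produce an equivalence $\Shvk^{br} \xrightarrow{\cong} \mathcal{P}resh(S_b^{-1}\Smk^{var})$ whose composite with the right-hand functor is this restriction.

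First I would build the functor $\Shvk^{br} \to \mathcal{P}resh(S_b^{-1}\Smk^{var})$ by restriction. Given a birational sheaf $\mathscr{F}$, restrict it along $\Smk^{var} \hookrightarrow \Smk$ to a presheaf $\mathscr{F}|_{\Smk^{var}}$ on $\Smk^{var}$, and check that it inverts birational morphisms. Indeed, if $f : X_0 \to X$ is a birational morphism of irreducible smooth $k$-schemes, then $f$ restricts to an isomorphism $f^{-1}(U) \xrightarrow{\cong} U$ over some dense open $U \subseteq X$; the dense open immersions $f^{-1}(U) \hookrightarrow X_0$ and $U \hookrightarrow X$ together with this isomorphism fit into a commutative square which, by \textbf{B2}, forces $f^* : \mathscr{F}(X) \to \mathscr{F}(X_0)$ to be a bijection. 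Hence $\mathscr{F}|_{\Smk^{var}}$ lies in the essential image of $\mathcal{P}resh(S_b^{-1}\Smk^{var})$, and this assignment is clearly functorial.

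Conversely I would construct an extension functor. Given $\mathscr{G} \in \mathcal{P}resh(S_b^{-1}\Smk^{var})$, regarded as a presheaf on $\Smk^{var}$ inverting birational morphisms, define a presheaf $\widetilde{\mathscr{G}}$ on all of $\Smk$ by $\widetilde{\mathscr{G}}(X) = \prod_i \mathscr{G}(X_i)$, where the $X_i$ are the connected (equivalently, irreducible) components of the smooth scheme $X$; functoriality is defined by sending each component of the source into the unique component of the target containing its image, which is a morphism in $\Smk^{var}$. By construction $\widetilde{\mathscr{G}}$ satisfies \textbf{B1}, and it satisfies \textbf{B2} because a dense open immersion restricts to a dense, hence birational, open immersion on each component, which $\mathscr{G}$ inverts; since birational sheaves are automatically Nisnevich, $\widetilde{\mathscr{G}} \in \Shvk^{br}$. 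The two functors are mutually quasi-inverse: for irreducible $X$ one has $\widetilde{\mathscr{G}}(X) = \mathscr{G}(X)$, while for a general birational sheaf $\mathscr{F}$, property \textbf{B1} yields a natural isomorphism $\mathscr{F}(X) \cong \prod_i \mathscr{F}(X_i)$ identifying $\mathscr{F}$ with the extension of its restriction. The $2$-commutativity is then immediate, since the composite of the equivalence with the right-hand functor is by construction the restriction to $\Smk^{var}$.

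The main obstacle I anticipate is the careful verification that the extension $\widetilde{\mathscr{G}}$ is genuinely functorial on all of $\Smk$ — one must check compatibility with composition when several components of a source map into a single component of a target, and when intermediate maps fail to be dominant — together with confirming that the objectwise bijections above assemble into natural transformations witnessing the quasi-inverse relations. The verification of \textbf{B2} also requires attention to the convention that the relevant open immersions be dense, so that their restrictions to the components remain birational; this is precisely where working with irreducible schemes in $\Smk^{var}$ is essential.
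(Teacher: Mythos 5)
Your proposal is correct and takes essentially the same approach as the paper: both identify $\mathcal{P}resh(S_b^{-1}\Smk^{var})$ with the full subcategory of $\mathcal{P}resh(\Smk^{var})$ of presheaves inverting birational morphisms (equivalently, satisfying \textbf{B2}, via the dense-open factorization argument you give) and then show that restriction along $\Smk^{var} \hookrightarrow \Smk$ is an equivalence onto it, using \textbf{B1} to recover values on finite coproducts of irreducibles. The only difference is that you spell out the quasi-inverse (extension by products over components) and its verification explicitly, where the paper compresses this into the remark that the restriction functor is an equivalence by \textbf{B1}.
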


\begin{proof}
Let $\mathcal{P}resh_{\mathbf{B2}}(\Smk^{var})$ be the full subcategory of $\mathcal{P}resh(\Smk^{var})$ consisting of presheaves which satisfy \textbf{B2}. Then $\mathcal{P}resh_{\mathbf{B2}}(\Smk^{var})$ is equivalent to $\mathcal{P}resh(S_b^{-1}\Smk^{var})$ by the universality of localizations. On the other hand, since every object in $\Smk$ is a finite coproduct of objects in $\Smk^{var}$, the restriction functor $\Shvk^{br} \to \mathcal{P}resh_{\mathbf{B2}}(\Smk^{var})$ is an equivalence by \textbf{B1}. Thus we have a $2$-commutative the diagram
\begin{equation*}
\begin{CD}
\Shvk^{br} @>{\cong}>> \mathcal{P}resh_{\mathbf{B2}}(\Smk^{var}) @<{\cong}<< \mathcal{P}resh(S_b^{-1}\Smk^{var}) \\
@VVV @VVV @VVV\\
\Shvk @>>> \mathcal{P}resh(\Smk^{var}) @= \mathcal{P}resh(\Smk^{var}).
\end{CD}
\end{equation*}
\end{proof}

For a category $\mathcal{C}$ enriched by $\Lambda$-modules, we denote $\mathcal{L}in(\mathcal{C},\Lambda)$ for the category of $\Lambda$-linear presheaves of $\Lambda$-modules on $\mathcal{C}$. We write
\begin{equation*}
\mathbf{PST}_k(\Lambda) = \mathcal{L}in(\mathcal{C}or_{k,\Lambda},\Lambda).
\end{equation*}
Objects in $\mathbf{PST}_k(\Lambda)$ are called a presheaves with transfers. A presheaf with transfers $M$ is called Nisnevich (resp. birational), if so is the direct image $\Gamma_*M$ by the canonical functor $\Gamma : \Smk \to \mathcal{C}or_{k,\Lambda}$. We denote $\mathbf{NST}_k(\Lambda)$ (resp. $\mathbf{NST}_k^{br}(\Lambda)$) for the category of Nisnevich (resp. birational) sheaves with transfers. We prove an analogue of Lemma \ref{eq. br. sh.} for sheaves with transfers.

\begin{lem}\label{eq. br. sh. cor}
There exists an equivalence of categories
\begin{equation*}
\mathbf{NST}_k^{br}(\Lambda) \xrightarrow{\cong} \mathcal{L}in(S_b^{-1}\mathcal{C}or^{var}_{k,\Lambda},\Lambda)
\end{equation*}
such that the diagram
\begin{equation*}
\begin{CD}
\mathbf{NST}_k^{br}(\Lambda) @>{\cong}>> \mathcal{L}in(S_b^{-1}\mathcal{C}or^{var}_{k,\Lambda},\Lambda) \\
@VVV @VVV \\
\mathbf{NST}_k(\Lambda) @>>> \mathcal{L}in(\mathcal{C}or^{var}_{k,\Lambda},\Lambda)
\end{CD}
\end{equation*}
is $2$-commutative.
\end{lem}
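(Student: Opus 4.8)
The plan is to transplant the proof of Lemma \ref{eq. br. sh.} to the $\Lambda$-linear setting, replacing $\mathcal{P}resh$ by $\mathcal{L}in$ throughout, while exploiting the additive structure of $\mathcal{C}or_{k,\Lambda}$ to streamline the argument. Write $\mathcal{L}in_{\mathbf{B2}}(\mathcal{C}or^{var}_{k,\Lambda},\Lambda)$ for the full subcategory of $\mathcal{L}in(\mathcal{C}or^{var}_{k,\Lambda},\Lambda)$ consisting of those $\Lambda$-linear presheaves $M$ for which every dense open embedding $U \hookrightarrow X$ in $\Smk^{var}$ induces an isomorphism $M(X) \xrightarrow{\cong} M(U)$, i.e. which satisfy \textbf{B2}. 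I will produce the desired equivalence as a composite
\begin{equation*}
\mathbf{NST}_k^{br}(\Lambda) \xrightarrow{\cong} \mathcal{L}in_{\mathbf{B2}}(\mathcal{C}or^{var}_{k,\Lambda},\Lambda) \xleftarrow{\cong} \mathcal{L}in(S_b^{-1}\mathcal{C}or^{var}_{k,\Lambda},\Lambda),
\end{equation*}
where the left arrow is restriction along $\mathcal{C}or^{var}_{k,\Lambda} \hookrightarrow \mathcal{C}or_{k,\Lambda}$ and the right arrow is pullback along the localization functor $\mathcal{C}or^{var}_{k,\Lambda} \to S_b^{-1}\mathcal{C}or^{var}_{k,\Lambda}$.

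For the left equivalence, first observe that a presheaf with transfers $M$ lies in $\mathbf{NST}_k^{br}(\Lambda)$ precisely when $\Gamma_* M$ is birational; since birational sheaves are automatically Nisnevich (\cite[Lem. 6.1.2]{AM}), no separate sheaf condition intervenes. Because $M$ is additive, condition \textbf{B1} for $\Gamma_* M$ is automatic, so the birationality of $\Gamma_* M$ reduces to \textbf{B2} alone. Next, every smooth $k$-scheme is a finite disjoint union of its irreducible connected components, and disjoint union is the biproduct in $\mathcal{C}or_{k,\Lambda}$; a finite correspondence between such biproducts decomposes as a matrix of correspondences between components, so $\mathcal{C}or_{k,\Lambda}$ is the additive completion of $\mathcal{C}or^{var}_{k,\Lambda}$, and restriction gives an equivalence $\mathcal{L}in(\mathcal{C}or_{k,\Lambda},\Lambda)\xrightarrow{\cong}\mathcal{L}in(\mathcal{C}or^{var}_{k,\Lambda},\Lambda)$. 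Under this equivalence the presheaves satisfying \textbf{B2} on $\mathcal{C}or_{k,\Lambda}$ correspond exactly to the objects of $\mathcal{L}in_{\mathbf{B2}}(\mathcal{C}or^{var}_{k,\Lambda},\Lambda)$ (an open embedding of a reducible scheme restricts to open embeddings of its components), which identifies $\mathbf{NST}_k^{br}(\Lambda)$ with $\mathcal{L}in_{\mathbf{B2}}(\mathcal{C}or^{var}_{k,\Lambda},\Lambda)$.

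For the right equivalence I invoke the universal property of the localization in its $\Lambda$-linear form: the localization $S_b^{-1}\mathcal{C}or^{var}_{k,\Lambda}$ carries a $\Lambda$-linear structure (via the calculus of fractions furnished by the Kahn-Sujatha machinery underlying Proposition \ref{corpv eq corvar}, itself resting on the injectivity of Lemma \ref{inj cor birat}), and the localization functor is $\Lambda$-linear, so $\mathcal{L}in(S_b^{-1}\mathcal{C}or^{var}_{k,\Lambda},\Lambda)$ is equivalent to the full subcategory of $\mathcal{L}in(\mathcal{C}or^{var}_{k,\Lambda},\Lambda)$ of presheaves inverting every finite correspondence associated with a birational morphism. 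It remains to check that this condition coincides with \textbf{B2}. One inclusion is immediate, as a dense open embedding is a birational morphism. For the converse, given a birational $f\colon X_0 \to X$, choose a dense open $i\colon U \hookrightarrow X$ with $f^{-1}(U)\xrightarrow{\cong} U$; the analogue for $M$ of the commutative square in the proof of Lemma \ref{inj cor birat} then shows $\Gamma(f)^*$ is an isomorphism whenever the two open-restriction maps are, which is exactly \textbf{B2}. Composing the two equivalences yields the asserted equivalence, and the stated diagram is $2$-commutative by construction, since both legs compute the restriction of an object of $\mathbf{NST}_k^{br}(\Lambda)$ to $\mathcal{C}or^{var}_{k,\Lambda}$.

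The main obstacle is the right-hand equivalence, specifically ensuring that $S_b^{-1}\mathcal{C}or^{var}_{k,\Lambda}$ is genuinely $\Lambda$-linear, so that $\mathcal{L}in(S_b^{-1}\mathcal{C}or^{var}_{k,\Lambda},\Lambda)$ is even defined and the $\Lambda$-linear universal property applies. This is where Lemma \ref{inj cor birat} and the calculus of fractions from \cite{KS1} do the essential work, in contrast with the set-valued Lemma \ref{eq. br. sh.}, where no linear structure on the localization had to be arranged.
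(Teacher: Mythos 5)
Your proposal is correct and follows essentially the same route as the paper: the paper's proof likewise factors the equivalence through the full subcategory $\mathcal{L}in_{\mathbf{B2}}(\mathcal{C}or^{var}_{k,\Lambda},\Lambda)$, obtaining the left equivalence by restriction (with \textbf{B1} supplied by additivity of linear presheaves) and the right one from the universal property of the localization, exactly as in Lemma \ref{eq. br. sh.}. The details you add --- the matrix decomposition of correspondences over disjoint unions, the check that \textbf{B2} is equivalent to inverting all birational correspondences, and the $\Lambda$-linear structure on $S_b^{-1}\mathcal{C}or^{var}_{k,\Lambda}$ --- are precisely what the paper compresses into ``a similar argument as the proof of Lemma \ref{eq. br. sh.}''.
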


\begin{proof}
Let $\mathcal{L}in_{\mathbf{B2}}(\mathcal{C}or^{var}_{k,\Lambda},\Lambda)$ be the full subcategory of $\mathcal{L}in(\mathcal{C}or^{var}_{k,\Lambda},\Lambda)$ consisting of presheaves which satisfy \textbf{B2}. By a similar argument as the proof of Lemma \ref{eq. br. sh.}, we also have a $2$-commutative diagram
\begin{equation*}
\begin{CD}
\mathbf{NST}_k^{br}(\Lambda) @>{\cong}>> \mathcal{L}in_{\mathbf{B2}}(\mathcal{C}or^{var}_{k,\Lambda},\Lambda) @<{\cong}<< \mathcal{L}in(S_b^{-1}\mathcal{C}or^{var}_{k,\Lambda},\Lambda) \\
@VVV @VVV @VVV\\
\mathbf{NST}_k(\Lambda) @>>> \mathcal{L}in(\mathcal{C}or^{var}_{k,\Lambda},\Lambda) @= \mathcal{L}in(\mathcal{C}or^{var}_{k,\Lambda},\Lambda).
\end{CD}
\end{equation*}
\end{proof}

The following lemma says that birational sheaves are $\aone$-invariant.

\begin{lem}\label{A1-inv of br sh}
For every $\mathscr{F} \in \Shvk^{br}$ and every $U \in \Smk^{var}$, the canonical map
\begin{equation*}
\mathscr{F}(U) \to \mathscr{F}(U \times \aone) 
\end{equation*}
is an isomorphism.
\end{lem}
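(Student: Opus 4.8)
The plan is to reduce the statement, via the equivalence of Lemma \ref{eq. br. sh.}, to a purely categorical assertion about the localized category $S_b^{-1}\Smk^{var}$. Writing $\tilde{\mathscr{F}} \in \mathcal{P}resh(S_b^{-1}\Smk^{var})$ for the presheaf corresponding to $\mathscr{F}$, the canonical map $\mathscr{F}(U) \to \mathscr{F}(U \times \aone)$ is identified with $\tilde{\mathscr{F}}$ applied to the class of the projection $p : U \times \aone \to U$. Hence it suffices to prove that $p$ becomes an isomorphism in $S_b^{-1}\Smk^{var}$: once that is known, every presheaf on the localized category, and $\tilde{\mathscr{F}}$ in particular, sends it to a bijection.

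One half of the inverse is immediate. The zero section $s_0 : U \to U \times \aone$ satisfies $p \circ s_0 = \mathrm{id}_U$ on the nose, so the classes satisfy $[p] \circ [s_0] = \mathrm{id}_U$ in $S_b^{-1}\Smk^{var}$, and consequently $p^*$ is a split injection. The real content is the other composite, namely the relation $[s_0] \circ [p] = \mathrm{id}_{U \times \aone}$; equivalently, that the collapse endomorphism $s_0 \circ p : (u,t) \mapsto (u,0)$ agrees with the identity after localizing by birational morphisms. To see why this should hold I would connect the two morphisms by a birational homotopy: set $W = U \times \aone$ and consider $H : W \times \aone \to W$, $((u,t),s) \mapsto (u, st)$, whose restrictions along the two constant sections $\iota_0, \iota_1 : W \rightrightarrows W \times \aone$ (at $s = 0$ and $s = 1$) are exactly $s_0 \circ p$ and $\mathrm{id}_W$. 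Thus it is enough to show $\iota_0^* = \iota_1^*$ on $\tilde{\mathscr{F}}$, i.e. that the two endpoint sections induce the same map.

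Reformulated at generic points, this is where the decisive work lies, and I expect it to be the main obstacle. Using \textbf{B2} one may evaluate $\mathscr{F}$ on the local ring along a divisor $W \times \{a\}$ and identify it with the value at the generic point $\Spec k(W) = \Spec k(U)(t)$; the maps $\iota_0^*, \iota_1^*$ then become the two specialization maps $\mathscr{F}(k(W)(s)) \to \mathscr{F}(k(W))$ attached to the $k(W)$-rational points $s = 0$ and $s = 1$, and the claim is their independence of the chosen rational point. Geometrically, $0$ and $1$ are linked by the parametrized line in $\aone$, so that $\iota_0$ and $\iota_1$ are $R$-equivalent constant sections; the input that makes them coincide in $S_b^{-1}\Smk^{var}$ is the Kahn–Sujatha description of the localized category, in which a rational point of an $\aone$-factor is $R$-equivalent to any other and hence collapses. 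Granting that description, $[s_0]\circ[p] = \mathrm{id}_{U\times\aone}$, and combined with the split injectivity above this shows $p$ is invertible in $S_b^{-1}\Smk^{var}$, whence $\mathscr{F}(U) \to \mathscr{F}(U \times \aone)$ is a bijection. The same circle of ideas also proves the statement directly through \textbf{B1}--\textbf{B2} and the Nisnevich sheaf property along the divisors $W \times \{0,1\}$, reducing the whole assertion to this single $R$-equivalence fact.
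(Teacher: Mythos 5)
Your opening reduction coincides with the paper's: by Lemma \ref{eq. br. sh.} it suffices to show that the projection $p : U \times \aone \to U$ becomes an isomorphism in $S_b^{-1}\Smk^{var}$, and the split injectivity via the zero section is correct. The paper concludes at exactly this point by quoting \cite[Thm. 1.7.9]{KS2}: stable birational morphisms are isomorphisms in $S_b^{-1}\Smk^{var}$, and $p$ is stable birational. You instead attempt to reprove this fact by a homotopy argument, and that attempt does not close.

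Concretely, writing $W = U \times \aone$, your argument needs $[\iota_0] = [\iota_1]$ as morphisms $W \to W \times \aone$ in $S_b^{-1}\Smk^{var}$ (or at least $\iota_0^* = \iota_1^*$ on every birational sheaf). Both $\iota_0$ and $\iota_1$ are sections of the projection $q : W \times \aone \to W$, so this equality would follow from the invertibility of $q$ in the localized category; but $q$ is again a projection off an $\aone$-factor, i.e. the very statement you are proving, with $W$ in place of $U$. Conversely, your homotopy $H$ only derives invertibility of the projection over $U$ from equality of the two sections over $W$, so iterating the reduction produces an infinite regress, not a proof. The same circularity occurs at the sheaf level: from $\iota_0^* \circ q^* = \mathrm{id} = \iota_1^* \circ q^*$ one gets $\iota_0^* = \iota_1^*$ only after knowing that $q^*$ is bijective, which is the lemma for $W$. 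Finally, the fact you invoke to break the circle, the ``Kahn--Sujatha description of the localized category'' by $R$-equivalence classes, is \cite[Thm. 6.6.3]{KS2} (the result used in Lemma \ref{Req and pibr}), and it requires the target to be smooth \emph{proper}; your targets $W$ and $W \times \aone$ are not proper, so it does not apply. The statement that $S_b$-localization coequalizes the unit sections, equivalently inverts the projections $X \times \aone \to X$, is a genuine theorem of Kahn--Sujatha, namely the \cite[Thm. 1.7.9]{KS2} that the paper cites; once it is invoked your homotopy apparatus is superfluous, and without it your argument is circular.
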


\begin{proof}
By \cite[Thm. 1.7.9]{KS2}, stable birational morphisms are isomorphisms in $S_b^{-1}\Smk^{var}$. Since the projection $U \times \aone \to U$ is stable biratioal, we have $\mathscr{F}(U) \cong \mathscr{F}(U \times \aone)$ by Lemma \ref{eq. br. sh.}.
\end{proof}

\section{Birational $\aone$-connected components}\label{BAC}

In \cite{AM}, Asok-Morel introduced a birational sheaf $\pi_0^{b\aone}(X)$, called the \textit{birational $\aone$-connected components}, for each $X \in \Smk^{prop}$. In this section, we generalize this construction for all Nisnevich sheaves on $\Smk$. We first construct a functor $\pi_0^{br} : \Shvk \to \Shvk^{br}$ and secondly prove that $\pi_0^{br}(X) \cong \pi_0^{b\aone}(X)$ for every $X \in \Smk^{prop}$. Moreover, we also prove that the functor $\pi_0^{br}$ is left adjoint to the inclusion $\Shvk^{br} \hookrightarrow \Shvk$. Similarly, we construct a left adjoint functor of the inclusion $\mathbf{NST}_k^{br}(\Lambda) \hookrightarrow \mathbf{NST}_k(\Lambda)$.

\subsection{Birational $\aone$-connected components of sheaves}

First, we give a functor $\pi_0^{br} : \Shvk \to \Shvk^{br}$.

\begin{defi}
We define a functor $\pi_0^{br} : \Shvk \to \Shvk^{br}$ as the composition
\begin{equation*}
\Shvk \hookrightarrow \mathcal{P}resh(\Smk) \to \mathcal{P}resh(\Smk^{var}) \to \Shvk^{br},
\end{equation*}
where the third functor is a left Kan extension of
\begin{equation*}
\Smk^{var} \to S_b^{-1}\Smk^{var} \hookrightarrow \mathcal{P}resh(S_b^{-1}\Smk^{var}) \cong \Shvk^{br}.
\end{equation*}
\end{defi}

Explicitly, every sheaf $\mathscr{F}$ on $\Smk$ is canonically isomorphic to the colimit of a diagram $\{X_{\lambda}\}_{\lambda}$ in $\Smk^{var}$ (note that every object in $\Smk$ is a coproduct of smooth $k$-varieties). Then by the definition, we have a natural bijection
\begin{equation*}\label{explicit pi0br.}
\pi_0^{br}(\mathscr{F})(U) \cong \colim{\lambda} \Hom_{S_b^{-1}\Smk^{var}}(U,X_{\lambda})
\end{equation*}
for every $U \in \Smk$. Thus the map
\begin{equation*}
\Hom_{\Smk}(U,X_{\lambda}) \to \Hom_{S_b^{-1}\Smk^{var}}(U,X_{\lambda})
\end{equation*}
induces a natural morphism
\begin{equation*}
\mathscr{F} \to \pi_0^{br}(\mathscr{F})
\end{equation*}
in $\Shvk$ which is functorial for $\mathscr{F}$.

Let $X$ be a smooth proper $k$-variety. Recall that two points in $X(K)$ for a field extension $K/k$ are called $R$-equivalence (in the sense of Manin \cite{Ma}), if these points are connected by the image of a chain of $k$-morphisms $\mathbb{P}^1_K \to X$. The quotient set of $X(K)$ by the $R$-equivalence is denoted by $X(K)/R$. The birational $\aone$-connected components $\pi_0^{b\aone}(X)$ has the property that there exists a canonical bijection
\begin{equation*}
X(K)/R \cong \pi_0^{b\aone}(X)(K)
\end{equation*}
for every finitely generated separable extension $K/k$ (see \cite[Thm. 6.2.1]{AM}). Our sheaf $\pi_0^{br}(X)$ also has the same property.

\begin{lem}\label{Req and pibr}
Let $X$ be a smooth proper $k$-scheme and $K/k$ be a finitely generated separable extension. Then the map $X(K) \to \pi_0^{br}(X)(K)$ induced by the canonical morphism $X \to \pi_0^{br}(X)$ factors though a bijection
\begin{equation*}
X(K)/R \cong \pi_0^{br}(X)(K).
\end{equation*}
\end{lem}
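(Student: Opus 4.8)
The plan is to compare $\pi_0^{br}(X)(K)$ with the birational $\aone$-connected components $\pi_0^{b\aone}(X)(K)$ of Asok-Morel, for which the analogous bijection with $X(K)/R$ is already known by \cite[Thm. 6.2.1]{AM}. If I can identify $\pi_0^{br}(X) \cong \pi_0^{b\aone}(X)$ as birational sheaves (at least on the points valued in finitely generated separable extensions $K/k$), then the desired bijection follows immediately. However, since the identification $\pi_0^{br}(X) \cong \pi_0^{b\aone}(X)$ is promised only for proper $X$ and is itself one of the goals of this section, I would instead give a direct argument at the level of $K$-points, using the explicit description of $\pi_0^{br}$ furnished by the excerpt.

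Concretely, by the explicit formula $\pi_0^{br}(\mathscr{F})(U) \cong \colim{\lambda} \Hom_{S_b^{-1}\Smk^{var}}(U, X_\lambda)$, and taking $\mathscr{F} = X$ represented by the variety itself, I obtain a natural identification
\begin{equation*}
\pi_0^{br}(X)(K) \cong \Hom_{S_b^{-1}\Smk^{var}}(\Spec K, X),
\end{equation*}
interpreting $\Spec K$ as a cofiltered limit of objects of $\Smk^{var}$ and using the convention $\mathscr{F}(K) = \colim{\lambda}\mathscr{F}(U_\lambda)$ from the Conventions. So the first step is to make this identification precise: a $K$-point of $\pi_0^{br}(X)$ is the same as a morphism $\Spec K \to X$ in the birational localization $S_b^{-1}\Smk^{var}$. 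The map $X(K) \to \pi_0^{br}(X)(K)$ is then exactly the localization map $\Hom_{\Smk}(\Spec K, X) \to \Hom_{S_b^{-1}\Smk^{var}}(\Spec K, X)$.

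The second step is to understand the morphisms $\Spec K \to X$ in $S_b^{-1}\Smk^{var}$. Here I would invoke the Kahn-Sujatha calculus of the birational localization \cite{KS1,KS2}: by the equivalence \eqref{eq. cat Smpv Smvar}, $S_b^{-1}\Smk^{var} \cong S_b^{-1}\Smk^{pv}$, and Hom-sets in this localized category between a point and a smooth proper variety are computed by $R$-equivalence classes of rational maps, which for a \emph{proper} target $X$ are the same as $R$-equivalence classes of actual $K$-points. This is precisely where properness of $X$ enters: valuative-type criteria (the valuative criterion of properness, and the specialization arguments underlying the Kahn-Sujatha theory) guarantee that every rational map $\Spec K \dashrightarrow X$ extends and that two $K$-points become equal in the localization if and only if they are $R$-equivalent. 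I therefore expect that the map $X(K) \to \pi_0^{br}(X)(K)$ is surjective (every class in the localization is represented by a genuine point, using properness) and that its fibers are exactly the $R$-equivalence classes.

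The third and final step is to verify that the factorization through $X(K)/R$ is a bijection, which amounts to checking the two matching conditions: (i) two $K$-points with the same image in $\pi_0^{br}(X)(K)$ are $R$-equivalent, and (ii) $R$-equivalent points have the same image. Condition (ii) is the easier direction, since an $R$-equivalence is witnessed by maps $\mathbb{P}^1_K \to X$, and by Lemma \ref{A1-inv of br sh} the birational sheaf $\pi_0^{br}(X)$ is $\aone$-invariant, so the two endpoints of any $\aone$-chain are identified; a short additional argument handles $\mathbb{P}^1$ versus $\aone$ by covering $\mathbb{P}^1_K$ by two affine lines. Condition (i), the injectivity, is the main obstacle: it requires showing that the only relations imposed by the birational localization on $K$-points of a proper variety are those coming from $R$-equivalence. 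I expect to extract this from the Asok-Morel computation of $\pi_0^{b\aone}(X)(K) \cong X(K)/R$ together with a comparison of the universal properties, or directly from the morphism calculus in $S_b^{-1}\Smk^{var}$ established by Kahn-Sujatha. The cleanest route, if available, is to show that the canonical map $\pi_0^{br}(X) \to \pi_0^{b\aone}(X)$ (which exists because $\pi_0^{b\aone}(X)$ is a birational sheaf receiving a map from $X$) is an isomorphism on finitely generated separable field points, reducing statement (i) entirely to \cite[Thm. 6.2.1]{AM}.
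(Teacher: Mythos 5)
Your proposal is essentially the paper's proof: the paper identifies $\pi_0^{br}(X)(K)$ with $\Hom_{S_b^{-1}\Smk^{var}}(U,X)$ for $U$ a smooth model of $K$ (your step 1), and then your steps 2 and 3 collapse into a single citation of \cite[Thm.~6.6.3]{KS2}, which asserts exactly the bijection $X(K)/R \cong \Hom_{S_b^{-1}\Smk^{var}}(U,X)$ for smooth proper $X$ --- so no separate injectivity/surjectivity verification is needed, and no passage through $S_b^{-1}\Smk^{pv}$ via \eqref{eq. cat Smpv Smvar} (which would import a resolution-of-singularities hypothesis that the lemma does not assume). One warning: your ``cleanest route, if available'' --- deducing injectivity from the comparison map $\pi_0^{br}(X) \to \pi_0^{b\aone}(X)$ and \cite[Thm.~6.2.1]{AM} --- is circular in the paper's architecture, since that comparison (Proposition \ref{br isom bA1}) is itself proved using this very lemma; of your two suggested options, only the Kahn-Sujatha morphism calculus is admissible, and it is the one the paper takes.
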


\begin{proof}
This follows from the bijection
\begin{equation*}
X(K)/R \cong \Hom_{S_b^{-1}\Smk^{var}}(U,X)
\end{equation*}
in \cite[Thm. 6.6.3]{KS2}, where $U$ is a smooth model of $K$.
\end{proof}

Next, we give an isomorphism $\pi_0^{br}(X) \cong \pi_0^{b\aone}(X)$ for each $X \in \Smk^{prop}$.

\begin{prop}\label{br isom bA1}
For every $X \in \Smk^{prop}$, there exists an isomorphism of birational sheaves
\begin{equation*}
\pi_0^{br}(X) \cong \pi_0^{b\aone}(X).
\end{equation*}
\end{prop}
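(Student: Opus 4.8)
The plan is to produce a natural comparison morphism $\phi : \pi_0^{br}(X) \to \pi_0^{b\aone}(X)$ of birational sheaves and then to check that it is an isomorphism by evaluating on function fields, where both sides are canonically the set of $R$-equivalence classes. The guiding principle is that a birational sheaf is determined, functorially, by its values at the generic points of smooth varieties, so that a morphism of birational sheaves is an isomorphism as soon as it is bijective on all finitely generated separable field extensions of $k$.

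First I would pass to the category $\mathcal{P}resh(S_b^{-1}\Smk^{var})$ through the equivalence of Lemma \ref{eq. br. sh.}. Since $X \in \Smk^{prop} \subseteq \Smk^{var}$, the explicit description of $\pi_0^{br}$ identifies $\pi_0^{br}(X)$ with the presheaf represented by $X$, that is $U \mapsto \Hom_{S_b^{-1}\Smk^{var}}(U,X)$. On the other hand $\pi_0^{b\aone}(X)$ is a birational sheaf by \cite{AM}, hence also defines a presheaf on $S_b^{-1}\Smk^{var}$. The canonical morphism $X \to \pi_0^{b\aone}(X)$ of \cite{AM} produces a distinguished section over $X$, namely the image of $\mathrm{id}_X$ under $X(X) \to \pi_0^{b\aone}(X)(X)$, and by the Yoneda lemma in $S_b^{-1}\Smk^{var}$ this section corresponds to a morphism $\phi : \pi_0^{br}(X) \to \pi_0^{b\aone}(X)$. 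This constructs the comparison map without invoking the adjunction between $\Shvk^{br}$ and $\Shvk$, which is only proved afterwards.

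Next I would show $\phi$ is an isomorphism. As both source and target are birational sheaves, Lemma \ref{eq. br. sh.} reduces this to showing that $\phi$ is bijective on sections over every $U \in \Smk^{var}$. Using property \textbf{B2} and passing to the generic point (valid since $U$ is irreducible and smooth, so $k(U)/k$ is finitely generated separable), one has $\mathscr{F}(U) \cong \mathscr{F}(k(U))$ for any birational sheaf $\mathscr{F}$; hence it suffices to check that $\phi$ is bijective on $K$-points for every finitely generated separable extension $K/k$. On such a $K$ we have the two identifications $\pi_0^{br}(X)(K) \cong X(K)/R$ of Lemma \ref{Req and pibr} and $\pi_0^{b\aone}(X)(K) \cong X(K)/R$ of \cite[Thm. 6.2.1]{AM}, both obtained as quotients of the canonical map out of $X(K)$.

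The crux, and where I expect the real work, is to verify that $\phi$ is compatible with these two identifications, i.e. that the composite $X(K) \to \pi_0^{br}(X)(K) \xrightarrow{\phi} \pi_0^{b\aone}(X)(K)$ agrees with the Asok-Morel canonical map $X(K) \to \pi_0^{b\aone}(X)(K)$. Establishing this forces one to trace both the Yoneda construction of $\phi$ and the definition of the canonical map $X \to \pi_0^{b\aone}(X)$, and to match them against the Kahn-Sujatha bijection $X(K)/R \cong \Hom_{S_b^{-1}\Smk^{var}}(U,X)$ of \cite[Thm. 6.6.3]{KS2} underlying Lemma \ref{Req and pibr}. Once this commutativity is secured, $\phi$ induces the identity on $X(K)/R$ for every finitely generated separable $K/k$, hence a bijection on all function fields, and is therefore an isomorphism of birational sheaves.
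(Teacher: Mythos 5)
Your proposal is correct, but it takes a genuinely different route from the paper. The paper never constructs a global comparison morphism at all: it restricts both sheaves to Asok--Morel's category $\mathcal{F}_k^r-\mathcal{S}et$ along the functor $\mathrm{Res}$ of \eqref{restriction functor}, which is fully faithful by Lemma \ref{A1-inv of br sh} and \cite[Thm. 6.1.7]{AM}; it then produces the bijection $\pi_0^{br}(X)(K) \cong X(K)/R \cong \pi_0^{b\aone}(X)(K)$ field by field, checks that these bijections commute both with field extensions and with the specialization maps attached to discrete valuation rings (the check being possible because $X(K)$ surjects onto all three sets), and finally invokes full faithfulness of $\mathrm{Res}$ to lift the resulting isomorphism of $\mathcal{F}_k^r$-objects to an isomorphism of birational sheaves. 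You instead build a morphism $\phi$ once and for all by Yoneda in $\mathcal{P}resh(S_b^{-1}\Smk^{var})$, after which being an isomorphism is a sectionwise condition; this avoids both the external input \cite[Thm. 6.1.7]{AM} and the entire compatibility check with specializations, which is a real economy. Note also that your ``crux'' is lighter than you fear: writing $s \in \pi_0^{b\aone}(X)(X)$ for the canonical section, the identity $\phi_U(\mathscr{L}(f)) = f^*(s)$ for $f : U \to X$ is immediate from Yoneda together with naturality of the canonical morphism $X \to \pi_0^{b\aone}(X)$, so the composite $X \to \pi_0^{br}(X) \xrightarrow{\phi} \pi_0^{b\aone}(X)$ agrees with the canonical map for formal reasons; the Kahn--Sujatha bijection of \cite[Thm. 6.6.3]{KS2} enters only afterwards, to see that both canonical maps out of $X(K)$ factor through bijections from $X(K)/R$, whence $\phi_K$ is a bijection. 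One small correction: $\Smk^{prop} \not\subseteq \Smk^{var}$ in this paper's conventions, since smooth proper $k$-schemes need not be irreducible; for general $X \in \Smk^{prop}$ you should first decompose $X$ into irreducible components and use \textbf{B1}, so that $\pi_0^{br}(X)$ is a finite coproduct of representables rather than representable (the paper's own proof glosses over the same reduction).
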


Before proving this proposition, we recall the category $\mathcal{F}_k^r-\mathcal{S}et$ introduced by Asok-Morel \cite{AM}. Let $\mathcal{F}_k$ be the category of finitely generated separable extension fields over $k$. Each object $\mathcal{S} \in \mathcal{F}_k^r-\mathcal{S}et$ is a covariant functor
\begin{equation*}
\mathcal{F}_k \to \mathcal{S}et
\end{equation*}
together with a map $\mathcal{S}(K) \to \mathcal{S}(\kappa_A)$ for each $K \in \mathcal{F}_k$ and its discrete valuation ring $A$ with $\kappa_A \in \mathcal{F}_k$. Moreover, a morphism $\mathcal{S} \to \mathcal{S'}$ in $\mathcal{F}_k^r-\mathcal{S}et$ is a natural transformation $\mathcal{S} \to \mathcal{S'}$ such that the diagram
\begin{equation*}
\begin{CD}
\mathcal{S}(K) @>>> \mathcal{S}(\kappa_A) \\
@VVV @VVV \\
\mathcal{S'}(K) @>>> \mathcal{S'}(\kappa_A)
\end{CD}
\end{equation*}
commutes. The restriction of each $\mathscr{F} \in \Shvk^{br}$ on $\mathcal{F}_k$ together with the map
\begin{equation*}
\mathscr{F}(K) \cong \mathscr{F}(A) \to \mathscr{F}(\kappa_A)
\end{equation*}
is an object of $\mathcal{F}_k^r-\mathcal{S}et$. Thus we have the restriction functor
\begin{equation}\label{restriction functor}
\mathrm{Res} : \Shvk^{br} \to \mathcal{F}_k^r-\mathcal{S}et.
\end{equation}

\begin{proof}[Proof of Proposition \ref{br isom bA1}]
By Lemma \ref{A1-inv of br sh} and \cite[Thm. 6.1.7]{AM}, the restriction functor \eqref{restriction functor} is fully faithful. Thus we only need to construct an isomorphism
\begin{equation}\label{Res isom br bA1}
\mathrm{Res}(\pi_0^{br}(X)) \cong \mathrm{Res}(\pi_0^{b\aone}(X))
\end{equation}
in $\mathcal{F}_k^r-\mathcal{S}et$. Lemma \ref{Req and pibr} and \cite[Thm. 6.2.1]{AM} give a commutative diagram
\begin{equation*}
\begin{CD}
X(K) @= X(K) @= X(K) \\
@VVV @VVV @VVV \\
\pi_0^{br}(X)(K) @>{\cong}>> X(K)/R @<{\cong}<< \pi_0^{b\aone}(X)(K)
\end{CD}
\end{equation*}
for all $K \in \mathcal{F}_k$. Then the vertical maps are surjective. Thus for every extension field ${L/K}$ which is finitely generated and separable over $k$, the obviously commutative diagram
\begin{equation*}
\begin{CD}
X(K) @= X(K) @= X(K) \\
@VVV @VVV @VVV \\
X(L) @= X(L) @= X(L)
\end{CD}
\end{equation*}
shows that
\begin{equation*}
\begin{CD}
\pi_0^{br}(X)(K) @>{\cong}>> X(K)/R @<{\cong}<< \pi_0^{b\aone}(X)(K) \\
@VVV @VVV @VVV \\
\pi_0^{br}(X)(L) @>{\cong}>> X(L)/R @<{\cong}<< \pi_0^{b\aone}(X)(L)
\end{CD}
\end{equation*}
commutes. Similarly, for every discrete valuation ring $A$ of $K$ with $\kappa_A \in \mathcal{F}_k$, the diagram
\begin{equation*}
\begin{CD}
X(K) @= X(K) @= X(K) \\
@VVV @VVV @VVV \\
X(\kappa_A) @= X(\kappa_A) @= X(\kappa_A)
\end{CD}
\end{equation*}
shows that
\begin{equation*}
\begin{CD}
\pi_0^{br}(X)(K) @>{\cong}>> X(K)/R @<{\cong}<< \pi_0^{b\aone}(X)(K) \\
@VVV @VVV @VVV \\
\pi_0^{br}(X)(\kappa_A) @>{\cong}>> X(\kappa_A)/R @<{\cong}<< \pi_0^{b\aone}(X)(\kappa_A)
\end{CD}
\end{equation*}
also commutes. Here the map $X(K) \to X(\kappa_A)$ is the composition of the inverse of the bijection $X(K) \xleftarrow{\cong} X(A)$ (the bijectivity follows from the valuative criterion of properness) and the canonical map $X(A) \to X(\kappa_A)$. Therefore, the bijection
\begin{equation*}
\pi_0^{br}(X)(K) \cong \pi_0^{b\aone}(X)(K)
\end{equation*}
gives an isomorphism \eqref{Res isom br bA1}.
\end{proof}

From now on, we write $\pi_0^{b\aone} = \pi_0^{br}$. We have an adjunction $\Shvk \rightleftarrows \Shvk^{br}$.

\begin{lem}\label{adj pi b aone}
The functor $\pi_0^{b\aone}$ is left adjoint to the inclusion $\Shvk^{br} \hookrightarrow \Shvk$.
\end{lem}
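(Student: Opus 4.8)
The plan is to establish the adjunction $\Hom_{\Shvk}(\mathscr{F}, \mathscr{G}) \cong \Hom_{\Shvk^{br}}(\pi_0^{b\aone}(\mathscr{F}), \mathscr{G})$ naturally in $\mathscr{F} \in \Shvk$ and $\mathscr{G} \in \Shvk^{br}$, by reducing the claim to the universal property of the left Kan extension used in the definition of $\pi_0^{br}$. First I would recall that $\pi_0^{br}$ was constructed as the composite
\begin{equation*}
\Shvk \hookrightarrow \mathcal{P}resh(\Smk) \to \mathcal{P}resh(\Smk^{var}) \xrightarrow{\mathrm{Lan}} \Shvk^{br},
\end{equation*}
where the last arrow is a left Kan extension along $\Smk^{var} \to S_b^{-1}\Smk^{var}$ under the identification $\mathcal{P}resh(S_b^{-1}\Smk^{var}) \cong \Shvk^{br}$ from Lemma \ref{eq. br. sh.}. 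The strategy is to show that each arrow in this composite is left adjoint to the corresponding inclusion (or restriction) on the target side, so that the total composite is left adjoint to the total inclusion $\Shvk^{br} \hookrightarrow \Shvk$.

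Concretely, I would argue in two stages. The restriction $\mathcal{P}resh(\Smk) \to \mathcal{P}resh(\Smk^{var})$ is an equivalence on the relevant subcategories because every object of $\Smk$ is a finite coproduct of smooth $k$-varieties and birational sheaves satisfy \textbf{B1}; this is exactly the content exploited in the proof of Lemma \ref{eq. br. sh.}, and it identifies $\Shvk^{br}$ with $\mathcal{P}resh_{\mathbf{B2}}(\Smk^{var})$. It then suffices to show that the left Kan extension $\mathcal{P}resh(\Smk^{var}) \to \mathcal{P}resh(S_b^{-1}\Smk^{var})$ is left adjoint to the fully faithful inclusion $\mathcal{P}resh(S_b^{-1}\Smk^{var}) \hookrightarrow \mathcal{P}resh(\Smk^{var})$ induced by the localization functor $\mathscr{L} : \Smk^{var} \to S_b^{-1}\Smk^{var}$. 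This is a general and standard fact: for any functor $\mathscr{L}$ between small categories, left Kan extension along $\mathscr{L}$ is left adjoint to restriction $\mathscr{L}^*$, and when $\mathscr{L}$ is a localization, $\mathscr{L}^*$ is fully faithful with essential image the presheaves inverting the localized morphisms (noted already in the opening paragraph of \S\ref{BSALOC}). Composing these adjunctions gives the desired left adjoint, and the unit of the composite adjunction is precisely the natural morphism $\mathscr{F} \to \pi_0^{b\aone}(\mathscr{F})$ constructed after the definition.

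I do not expect a genuine obstacle here, since the result is formal once the construction of $\pi_0^{br}$ is unwound; the only point requiring care is bookkeeping the identifications of categories of presheaves so that the several adjunctions and equivalences compose coherently and the unit of the resulting adjunction matches the canonical map $\mathscr{F} \to \pi_0^{b\aone}(\mathscr{F})$ exhibited earlier. The mildly delicate step is verifying that the essential image of the fully faithful inclusion $\mathcal{P}resh(S_b^{-1}\Smk^{var}) \hookrightarrow \mathcal{P}resh(\Smk^{var})$ corresponds under Lemma \ref{eq. br. sh.} exactly to $\Shvk^{br}$ and not merely to the presheaves satisfying \textbf{B2}; this is handled by invoking \textbf{B1} together with the fact, recorded in \S\ref{BSALOC}, that birational sheaves are automatically Nisnevich. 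Thus the whole argument is an exercise in composing a chain of adjunctions and equivalences already assembled in the previous lemmas, and I would present it in that compressed form rather than re-deriving the Kan-extension adjunction from scratch.
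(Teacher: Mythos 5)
Your route is genuinely different from the paper's, and its core idea is sound, but one step is wrong as literally stated and needs repair. For comparison: the paper does not decompose $\pi_0^{b\aone}$ into a chain of adjunctions at all. It first proves the adjunction isomorphism on representables, $\Hom_{\Shvk^{br}}(\pi_0^{b\aone}(X),\mathscr{G}) \cong \mathscr{G}(X) \cong \Hom_{\Shvk}(X,\mathscr{G})$ for $X \in \Smk^{var}$, by applying Yoneda's lemma twice (once in $\Smk$, once in $S_b^{-1}\Smk^{var}$, using that under Lemma \ref{eq. br. sh.} the sheaf $\pi_0^{b\aone}(X)$ is the presheaf represented by $X$ on the localized category), and then extends to an arbitrary $\mathscr{F} \in \Shvk$ by writing $\mathscr{F}$ as a colimit of smooth $k$-varieties and passing the isomorphism through the colimit. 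Your proposal replaces this representable-plus-colimit bootstrap by the abstract adjunction (left Kan extension along the localization $\mathscr{L} : \Smk^{var} \to S_b^{-1}\Smk^{var}$) $\dashv$ (restriction $\mathscr{L}^*$); that part is correct and standard, and it even sidesteps the step the paper leaves implicit, namely that $\pi_0^{b\aone}$ commutes with the colimits in question.

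The gap is in how you pass from $\Shvk$ to $\mathcal{P}resh(\Smk^{var})$. Your stated strategy, ``each arrow in this composite is left adjoint to the corresponding inclusion,'' fails for the first arrow: the inclusion $\Shvk \hookrightarrow \mathcal{P}resh(\Smk)$ is a \emph{right} adjoint (of sheafification) and has no right adjoint of its own, since it does not preserve colimits; so the composite cannot be exhibited arrow-by-arrow as a composite of left adjoints. Relatedly, your stage 1 only records the equivalence $\Shvk^{br} \cong \mathcal{P}resh_{\mathbf{B2}}(\Smk^{var})$, whereas what the argument needs is a Hom-level statement for a \emph{general} (non-birational) sheaf $\mathscr{F}$: restriction to $\Smk^{var}$ must induce a bijection $\Hom_{\Shvk}(\mathscr{F},\mathscr{G}) \cong \Hom_{\mathcal{P}resh(\Smk^{var})}(\mathscr{F}|_{\Smk^{var}},\mathscr{G}|_{\Smk^{var}})$ for every birational $\mathscr{G}$. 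Both defects are cured by the same maneuver: work at the presheaf level, where the composite $\mathscr{L}_! \circ \iota^* : \mathcal{P}resh(\Smk) \to \mathcal{P}resh(S_b^{-1}\Smk^{var})$ (restriction $\iota^*$ along $\iota : \Smk^{var} \hookrightarrow \Smk$, then Kan extension) \emph{is} a composite of left adjoints, hence left adjoint to $\iota_* \circ \mathscr{L}^*$ (right Kan extension after restriction); then check that $\iota_* \circ \mathscr{L}^*$ is naturally isomorphic to the inclusion $\Shvk^{br} \hookrightarrow \mathcal{P}resh(\Smk)$, using that \textbf{B1} holds for \emph{every} Nisnevich sheaf (every object of $\Smk$ is a finite coproduct of varieties, so $\iota_*$ of a restriction is computed by products over components); finally, since this inclusion factors through the full subcategory $\Shvk$ (birational sheaves are Nisnevich), the adjunction restricts to the desired one, $\pi_0^{b\aone} \dashv (\Shvk^{br} \hookrightarrow \Shvk)$. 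With that patch your argument is complete and is a legitimate alternative to the paper's proof.
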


\begin{proof}
Let $\mathscr{G}$ be an arbitrary birational sheaf. For every $X \in \Smk^{var}$, Yoneda's lemma in $\Smk$ gives a natural isomorphism
\begin{equation*}
\Hom_{\Shvk}(X,\mathscr{G}) \cong \mathscr{G}(X).
\end{equation*}
Under the identification by the equivalence in Lemma \ref{eq. br. sh.}, the presheaf $\pi_0^{b\aone}(X)$ is represented by $X$ in $S_b^{-1}\Smk^{var}$. Thus Yoneda's lemma in $S_b^{-1}\Smk^{var}$ also gives an isomorphism
\begin{equation*}
\Hom_{\Shvk^{br}}(\pi_0^{b\aone}(X),\mathscr{G}) \cong \mathscr{G}(X)
\end{equation*}
and we have
\begin{equation}\label{isom. adj. of br. for X}
\Hom_{\Shvk^{br}}(\pi_0^{b\aone}(X),\mathscr{G}) \cong \Hom_{\Shvk}(X,\mathscr{G}).
\end{equation}
Let $\mathscr{F}$ be a Nisnevich sheaf on $\Smk$ which is the colomit of a diagram $\{X_{\lambda}\}_{\lambda}$ in $\Smk^{var}$. Then \eqref{isom. adj. of br. for X} gives isomorphisms
\begin{align*}
\Hom_{\Shvk^{br}}(\pi_0^{b\aone}(\mathscr{F}),\mathscr{G}) &\cong \Hom_{\Shvk^{br}}(\colim{\lambda}\pi_0^{b\aone}(X_{\lambda}),\mathscr{G}) \\
&\cong \lim_{\lambda}\Hom_{\Shvk^{b\aone}}(\pi_0^{b\aone}(X_{\lambda}),\mathscr{G}) \\
&\cong \lim_{\lambda}\Hom_{\Shvk}(X_{\lambda},\mathscr{G}) \\
&\cong \Hom_{\Shvk}(\colim{\lambda}X_{\lambda},\mathscr{G}) \\
&\cong \Hom_{\Shvk}(\mathscr{F},\mathscr{G}).
\end{align*}
\end{proof}

\subsection{Birational $\aone$-connected components with transfers}

Next, we define a functor $\Lambda\pi_{0,tr}^{b\aone} : \mathbf{NST}_k(\Lambda) \to \mathbf{NST}_k^{br}(\Lambda)$. We denote $\Lambda_{tr}$ for the Yoneda embedding
\begin{equation*}
\mathcal{C}or_{k,\Lambda} \to \mathbf{NST}_k(\Lambda); X \mapsto \mathcal{C}or_{k,\Lambda}(-,X).
\end{equation*}

\begin{defi}
We define a functor $\Lambda\pi_{0,tr}^{b\aone} : \mathbf{NST}_k(\Lambda) \to \mathbf{NST}_k^{br}(\Lambda)$ as the composition
\begin{equation*}
\mathbf{NST}_k(\Lambda) \hookrightarrow \mathbf{PST}_k(\Lambda) \to \mathcal{L}in(\mathcal{C}or_{k,\Lambda}^{var},\Lambda) \to \mathbf{NST}_k^{br}(\Lambda),
\end{equation*}
where the third functor is a left Kan extension of
\begin{equation*}
\mathcal{C}or_{k,\Lambda}^{var} \to S_b^{-1}\mathcal{C}or_{k,\Lambda}^{var} \hookrightarrow \mathcal{L}in(S_b^{-1}\mathcal{C}or_{k,\Lambda}^{var},\Lambda) \cong \mathbf{NST}_k^{br}(\Lambda).
\end{equation*}
We simply write $\Lambda\pi_{0,tr}^{b\aone}(X) = \Lambda\pi_{0,tr}^{b\aone}(\Lambda_{tr}(X))$.
\end{defi}

We have an adjunction $\mathbf{NST}_k(\Lambda) \rightleftarrows \mathbf{NST}_k^{br}(\Lambda)$.

\begin{lem}\label{adj pi b aone tr}
The functor $\Lambda\pi_{0,tr}^{b\aone}$ is left adjoint to ${\mathbf{NST}_k^{br}(\Lambda) \hookrightarrow \mathbf{NST}_k(\Lambda)}$.
\end{lem}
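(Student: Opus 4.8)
The plan is to mimic exactly the proof of Lemma \ref{adj pi b aone} (the untransferred case), transporting every step into the additive/$\Lambda$-linear setting via the equivalence of Lemma \ref{eq. br. sh. cor}. The key point is that $\Lambda\pi_{0,tr}^{b\aone}$ was built as a left Kan extension, so its left-adjointness is essentially formal once the right objects are identified. First I would fix an arbitrary birational sheaf with transfers $\mathscr{G} \in \mathbf{NST}_k^{br}(\Lambda)$ and establish the representability statement for generators: for every $X \in \mathcal{C}or_{k,\Lambda}^{var}$, the enriched Yoneda lemma in $\mathcal{C}or_{k,\Lambda}$ gives a natural isomorphism $\Hom_{\mathbf{NST}_k(\Lambda)}(\Lambda_{tr}(X),\mathscr{G}) \cong \mathscr{G}(X)$. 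Under the equivalence of Lemma \ref{eq. br. sh. cor}, the object $\Lambda\pi_{0,tr}^{b\aone}(X)$ corresponds to the $\Lambda$-linear presheaf represented by $X$ in $S_b^{-1}\mathcal{C}or_{k,\Lambda}^{var}$, so the enriched Yoneda lemma in $S_b^{-1}\mathcal{C}or_{k,\Lambda}^{var}$ likewise yields $\Hom_{\mathbf{NST}_k^{br}(\Lambda)}(\Lambda\pi_{0,tr}^{b\aone}(X),\mathscr{G}) \cong \mathscr{G}(X)$. Combining these produces the adjunction isomorphism on representables,
\begin{equation*}
\Hom_{\mathbf{NST}_k^{br}(\Lambda)}(\Lambda\pi_{0,tr}^{b\aone}(X),\mathscr{G}) \cong \Hom_{\mathbf{NST}_k(\Lambda)}(\Lambda_{tr}(X),\mathscr{G}),
\end{equation*}
natural in both $X$ and $\mathscr{G}$.

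Next I would extend from representables to an arbitrary $M \in \mathbf{NST}_k(\Lambda)$ by writing $M$ as a colimit of representables $\{\Lambda_{tr}(X_\lambda)\}_\lambda$ with each $X_\lambda \in \mathcal{C}or_{k,\Lambda}^{var}$. Since $\Lambda\pi_{0,tr}^{b\aone}$ is a left Kan extension, hence a left adjoint on the relevant presheaf categories, it preserves colimits, so $\Lambda\pi_{0,tr}^{b\aone}(M) \cong \colim{\lambda} \Lambda\pi_{0,tr}^{b\aone}(X_\lambda)$. The isomorphism on representables then propagates through the chain
\begin{align*}
\Hom_{\mathbf{NST}_k^{br}(\Lambda)}(\Lambda\pi_{0,tr}^{b\aone}(M),\mathscr{G}) &\cong \lim_\lambda \Hom_{\mathbf{NST}_k^{br}(\Lambda)}(\Lambda\pi_{0,tr}^{b\aone}(X_\lambda),\mathscr{G}) \\
&\cong \lim_\lambda \Hom_{\mathbf{NST}_k(\Lambda)}(\Lambda_{tr}(X_\lambda),\mathscr{G}) \\
&\cong \Hom_{\mathbf{NST}_k(\Lambda)}(M,\mathscr{G}),
\end{align*}
converting the defining colimit into a limit of hom-groups in the usual way. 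Naturality in $\mathscr{G}$, and hence the adjunction, follows since each isomorphism in the chain is natural.

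The step I expect to require the most care is the representability claim for $\Lambda\pi_{0,tr}^{b\aone}(X)$, namely that under the equivalence of Lemma \ref{eq. br. sh. cor} it really corresponds to the presheaf represented by $X$ in the localized correspondence category. This is exactly analogous to the unlabeled ``Explicit $\pi_0^{br}$'' computation in the untransferred case, where one unwinds the left Kan extension along $\mathcal{C}or_{k,\Lambda}^{var} \to S_b^{-1}\mathcal{C}or_{k,\Lambda}^{var}$ and observes that on a representable it collapses to the localization functor composed with the Yoneda embedding. One must verify that the enriched left Kan extension sends $\Lambda_{tr}(X)$ to the $\Lambda$-linear representable $S_b^{-1}\mathcal{C}or_{k,\Lambda}^{var}(-,X)$, which rests on the fact that a left Kan extension of a functor along a localization, when evaluated on a representable, returns the image of that object under the localization. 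The remaining subtlety is purely bookkeeping: ensuring that all the Yoneda and colimit manipulations stay inside the $\Lambda$-linear enriched categories rather than the underlying categories of sets, but since $\mathbf{NST}_k^{br}(\Lambda) \cong \mathcal{L}in(S_b^{-1}\mathcal{C}or_{k,\Lambda}^{var},\Lambda)$ is itself a category of $\Lambda$-linear presheaves, the enriched Yoneda lemma applies verbatim and no genuinely new obstacle arises.
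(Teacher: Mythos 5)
Your proposal is correct and follows essentially the same route as the paper's proof: establish the adjunction isomorphism on representables by combining the enriched Yoneda lemma in $\mathcal{C}or_{k,\Lambda}$ with the identification, via Lemma \ref{eq. br. sh. cor}, of $\Lambda\pi_{0,tr}^{b\aone}(X)$ with the presheaf represented by $X$ in $S_b^{-1}\mathcal{C}or_{k,\Lambda}^{var}$, then extend to arbitrary $M$ by writing it as a colimit of representables and using that $\Lambda\pi_{0,tr}^{b\aone}$, being a left Kan extension, commutes with colimits. If anything, your bookkeeping of which Hom-groups live in $\mathbf{NST}_k(\Lambda)$ versus $\mathbf{NST}_k^{br}(\Lambda)$ is slightly cleaner than the paper's.
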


\begin{proof}
Let $N$ be an arbitrary birational sheaf with transfers of $\Lambda$-modules. For every $X \in \mathcal{C}or_{k,\Lambda}^{var}$, Yoneda's lemma in $\mathcal{C}or_{k,\Lambda}$ gives an isomorphism
\begin{equation*}
\Hom_{\mathbf{NST}_k^{br}(\Lambda)}(\Lambda\pi_{0,tr}^{b\aone}(X),N) \cong N(X).
\end{equation*}
Under the identification by the equivalence in Lemma \ref{eq. br. sh. cor}, the presheaf $\Lambda\pi_{0,tr}^{b\aone}(X)$ is represented by $X$ in $S_b^{-1}\mathcal{C}or_{k,\Lambda}^{var}$. Thus Yoneda's lemma in $S_b^{-1}\mathcal{C}or_{k,\Lambda}^{var}$ shows that
\begin{equation}\label{eq of adj pi b aone tr}
\Hom_{\mathbf{NST}_k(\Lambda)}(\Lambda_{tr}(X),N) \cong N(X) \cong \Hom_{\mathbf{NST}_k^{br}(\Lambda)}(\Lambda\pi_{0,tr}^{b\aone}(X),N).
\end{equation}
Let $M$ be a Nisnevich sheaf with transfers such that
\begin{equation*}
M \cong \colim{\lambda}\Lambda_{tr}(X_{\lambda})
\end{equation*}
for a diagram $\{X_{\lambda}\}_{\lambda}$ in $\mathcal{C}or_{k,\Lambda}^{var}$. Then there exists a canonical isomorphism
\begin{equation*}
\Lambda\pi_{0,tr}^{b\aone}(M) \cong \colim{\lambda}\Lambda\pi_{0,tr}^{b\aone}(X_{\lambda})
\end{equation*}
by the definition of the functor $\Lambda\pi_{0,tr}^{b\aone}$. Thus \eqref{eq of adj pi b aone tr} gives isomorphisms
\begin{align*}
\Hom_{\mathbf{NST}_k^{br}(\Lambda)}(M,N) &\cong \Hom_{\mathbf{NST}_k^{br}(\Lambda)}(\colim{\lambda}\Lambda_{tr}(X_{\lambda}),N) \\
&\cong \lim_{\lambda} \Hom_{\mathbf{NST}_k^{br}(\Lambda)}(\Lambda_{tr}(X_{\lambda}),N) \\
&\cong \lim_{\lambda} \Hom_{\mathbf{NST}_k^{br}(\Lambda)}(\Lambda\pi_{0,tr}^{b\aone}(X_{\lambda}),N) \\
&\cong \Hom_{\mathbf{NST}_k^{br}(\Lambda)}(\colim{\lambda}\Lambda\pi_{0,tr}^{b\aone}(X_{\lambda}),N) \\
&\cong \Hom_{\mathbf{NST}_k^{br}(\Lambda)}(\Lambda\pi_{0,tr}^{b\aone}(M),N).
\end{align*}
\end{proof}

\begin{rem}
Lemma \ref{adj pi b aone tr} also gives a left adjoint of the forgetful functor $\mathbf{NST}_k^{br}(\Lambda) \to \Shvk$. Indeed, a left Kan extension $\Shvk \to \mathbf{NST}_k(\Lambda)$ of the composition $\Smk \xrightarrow{\Gamma} \mathcal{C}or_{k,\Lambda} \xrightarrow{\Lambda_{tr}} \mathbf{NST}_k(\Lambda)$ is left adjoint to the forgetful functor $\mathbf{NST}_k(\Lambda) \to \Shvk$.
\end{rem}

\section{Birational sheaves of modules}\label{BSOM}

Recall that a Nisnevich sheaf of $\Lambda$-modules $M$ on $\Smk$ is called \textit{strictly $\aone$-invariant}, if the map
\begin{equation*}
H^i_{Nis}(U,M) \to H^i_{Nis}(U \times \aone,M)
\end{equation*}
induced by the projection $U \times \aone \to U$ is an isomorphism for all $i \geq 0$ and all $U \in \Smk$. Our aim of this section is to construct a birational sheaf of $\Lambda$-modules $M^{br}$ with a monomorphism $\mu^M : M^{br} \hookrightarrow M$ which induces an isomorphism $M^{br}(X) \cong M(X)$ for all $X \in \Smk^{prop}$. This morphism $\mu^M$ plays a key role in the proof of Theorem \ref{Intro. str. A1}.

\subsection{Strictly $\aone$-invariant sheaves and birational invariance}

We denote $\Mod^{\aone}(\Lambda)$ (resp. $\Mod^{br}(\Lambda)$) for the category of strictly $\aone$-invariant (resp. birational) sheaves of $\Lambda$-modules. Then we have $\Mod^{br}(\Lambda) \subseteq \Mod^{\aone}(\Lambda)$. Indeed, all birational sheaves are $\aone$-invariant by Lemma \ref{A1-inv of br sh}. On the other hand, $\aone$-invariant birational sheaves of abelian groups are strictly $\aone$-invariant by \cite[proof of Lem. 2.4]{AH}. The following lemma says that every proper birational morphism induces an isomorphism for all strictly $\aone$-invariant sheaves.

\begin{lem}\label{proper birationality}
Let $M$ be a strictly $\aone$-invariant sheaf of $\Lambda$-modules and ${f : X \to Y}$ be a proper birational morphism of smooth $k$-varieties. Then the induced map $M(Y) \to M(X)$ is an isomorphism.
\end{lem}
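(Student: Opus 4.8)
The plan is to exploit the fact that strictly $\aone$-invariant sheaves are \emph{unramified} in the sense of Morel, so that the sections of $M$ over a smooth variety are controlled by codimension one data. Since $f$ is birational and all our varieties are irreducible, $X$ and $Y$ share a common function field $K$, and I will regard $M(X)$ and $M(Y)$ as submodules of $M(K)$ via restriction to the generic point. Concretely, I would first recall from Morel's theory (\cite{Mo1}) the following two properties of a strictly $\aone$-invariant sheaf $M$ of $\Lambda$-modules and a smooth irreducible $W$ with function field $K$: the restriction $r_W \colon M(W) \to M(K)$ to the generic point is injective, and its image equals $\bigcap_{w \in W^{(1)}} M(\mathcal{O}_{W,w})$ taken inside $M(K)$, where $W^{(1)}$ denotes the set of codimension one points. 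The map $f^* \colon M(Y) \to M(X)$ is compatible with these restrictions, that is $r_X \circ f^* = r_Y$, because $f$ induces the identity on $K$; injectivity of $f^*$ is then immediate from injectivity of $r_X$. The whole lemma thus reduces to the equality of submodules $M(X) = M(Y)$ inside $M(K)$, and via the intersection description to comparing the families of local rings $\{\mathcal{O}_{X,x}\}_{x \in X^{(1)}}$ and $\{\mathcal{O}_{Y,y}\}_{y \in Y^{(1)}}$.

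The geometric heart of the argument is that a proper birational morphism onto a smooth variety is an isomorphism in codimension one, and I would establish exactly the form I need as follows. Given $y \in Y^{(1)}$, the closure $Z = \overline{\{y\}}$ is a prime divisor, hence Cartier since $Y$ is smooth, so $f^{-1}(Z)$ is a nonzero effective Cartier divisor on $X$; since $f$ is proper and dominant it is surjective, so some component $D$ of $f^{-1}(Z)$ dominates $Z$, whence $\eta_D \in X^{(1)}$ and $f(\eta_D) = y$. The resulting local homomorphism $\mathcal{O}_{Y,y} \hookrightarrow \mathcal{O}_{X,\eta_D}$ is an inclusion of discrete valuation rings inside $K$ with the same fraction field, and such an inclusion is forced to be an equality. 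This yields the inclusion of families $\{\mathcal{O}_{Y,y}\}_{y \in Y^{(1)}} \subseteq \{\mathcal{O}_{X,x}\}_{x \in X^{(1)}}$ of subrings of $K$; the additional codimension one points appearing on $X$ are precisely the exceptional divisors, whose image in $Y$ has codimension at least two and which therefore impose no new condition.

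Granting these two inputs, surjectivity of $f^*$ is a short formal argument: for $\beta \in M(X)$ the element $r_X(\beta) \in M(K)$ lies in $M(\mathcal{O}_{X,x})$ for every $x \in X^{(1)}$, hence in $M(\mathcal{O}_{Y,y})$ for every $y \in Y^{(1)}$ by the previous step, hence in $\bigcap_{y} M(\mathcal{O}_{Y,y}) = M(Y)$; writing $r_X(\beta) = r_Y(\alpha)$ and using $r_X \circ f^* = r_Y$ with injectivity of $r_X$ gives $f^*\alpha = \beta$. I expect the main obstacle to be the careful justification of the unramifiedness input in the required generality: Morel's intersection formula and the injectivity of the restriction to the generic point are cleanest over a perfect field, so I would either invoke the standing framework in which $k$ admits resolution of singularities, or verify directly that the two properties actually used (generic injectivity and the codimension one description of global sections) persist for sheaves of $\Lambda$-modules in the present setting. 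The second, purely geometric step needs no resolution hypothesis but should be stated with care, as it is precisely where properness of $f$ and normality of the smooth target $Y$ enter.
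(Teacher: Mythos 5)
Your argument is correct, and its skeleton coincides with the paper's: both use the description $M(U) \cong \bigcap_{x \in U^{(1)}} \Image\bigl(M(\mathcal{O}_{U,x}) \to M(k(U))\bigr)$ from \cite[Lem. 4.2]{As} to reduce the lemma to the statement that every $\mathcal{O}_{Y,y}$ with $y \in Y^{(1)}$ occurs among the $\mathcal{O}_{X,x}$ with $x \in X^{(1)}$ as subrings of the common function field $K$. Where you genuinely differ is in how this codimension-one lifting is produced. The paper applies the valuative criterion of properness: the square formed by $\Spec K \to X$ and $\Spec \mathcal{O}_{Y,y} \to Y$ admits a lift $\phi_y \colon \Spec \mathcal{O}_{Y,y} \to X$, and the resulting sandwich $\mathcal{O}_{Y,y} \subseteq \mathcal{O}_{X,\phi_y(y')} \subseteq \mathcal{O}_{Y,y}$ inside $K$ forces equality, so the image of the closed point is already the required point of $X^{(1)}$. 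You instead pull back the prime divisor $\overline{\{y\}}$ (Cartier because $Y$ is smooth), use properness plus dominance to get surjectivity and a component of $f^{-1}(\overline{\{y\}})$ dominating it, and conclude by rigidity of inclusions of DVRs with common fraction field. Both are sound; the valuative-criterion route is shorter and invokes properness exactly once, while yours needs the auxiliary inputs (local factoriality of $Y$, Krull's principal ideal theorem for the codimension of the components, closedness of images) but displays the geometry of exceptional divisors more explicitly. Three small points. First, injectivity of $f^*$ follows from injectivity of $r_Y$, not of $r_X$, since $r_X \circ f^* = r_Y$. Second, your aside that the exceptional divisors of $X$ ``impose no new condition'' because their images have codimension at least two is not what makes the argument work: that they impose no new condition on sections coming from $Y$ is exactly the functoriality inclusion $r_Y(M(Y)) \subseteq r_X(M(X))$, and your closing formal argument is complete without the aside, so nothing breaks. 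Third, your worry about perfectness or resolution of singularities is unnecessary: like the paper, you only need \cite[Lem. 4.2]{As}, which is available for strictly $\aone$-invariant sheaves over an arbitrary field, and indeed this lemma is the one place in the paper stated without any hypothesis on $k$.
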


\begin{proof}
We write $K = k(X)$ and identify $K$ with $k(Y)$ under the isomorphism induced by $f$. For each $U \in \Smk$, the map $M(U) \to M(k(U))$ gives an isomorphism
\begin{equation*}
M(U) \cong \bigcap_{x \in U^{(1)}} \Image (M(\mathcal{O}_{U,x}) \to M(k(U)))
\end{equation*}
by \cite[Lem. 4.2]{As}. Thus we only need to show that
\begin{equation*}
\bigcap_{x \in X^{(1)}} \Image (M(\mathcal{O}_{X,x}) \to M(K)) = \bigcap_{y \in Y^{(1)}} \Image (M(\mathcal{O}_{Y,y}) \to M(K)).
\end{equation*}
The inclusion $\supseteq$ follows from the commutative diagram
\begin{equation*}
\begin{CD}
M(Y) @>>> M(k(Y)) \\
@V{f^*}VV @VV{\cong}V \\
M(X) @>>> M(k(X)).
\end{CD}
\end{equation*}
We prove $\subseteq$. By the valuative criterion of properness, for every $y \in Y^{(1)}$ the commutative diagram
\begin{equation*}
\begin{CD}
\Spec K @>>> X \\
@VVV @VV{f}V \\
\Spec \mathcal{O}_{Y,y} @>>> Y
\end{CD}
\end{equation*}
has a lift $\phi_y : \Spec \mathcal{O}_{Y,y} \to X$. Then the local ring $\mathcal{O}_{X,\phi_y(y')}$ for the closed point $y' \in \Spec \mathcal{O}_{Y,y}$ coincides with the discrete valuation ring $\mathcal{O}_{Y,y}$ of $K$. In particular, $\phi_y(y')$ has codimension $1$ in $X$. Therefore, we have
\begin{align*}
\bigcap_{x \in X^{(1)}} \Image (M(\mathcal{O}_{X,x})  \to M(K)) &\subseteq \bigcap_{y \in Y^{(1)}} \Image (M(\mathcal{O}_{X,\phi_y(y')}) \to M(K)) \\
&= \bigcap_{y \in Y^{(1)}} \Image (M(\mathcal{O}_{Y,y}) \to M(K)).
\end{align*}
\end{proof}

\begin{rem}
More generally, Lemma \ref{proper birationality} holds for unramified sheaves of Morel \cite{Mo2} by the same proof.
\end{rem}

\subsection{Birational construction for strictly $\aone$-invariant sheaves}

Assume $k$ admits a resolution of singularities. We define a canonical functor ${\Mod^{\aone}(\Lambda) \to \Mod^{br}(\Lambda)}$. Lemma \ref{proper birationality} says that the restriction to $\Smk^{pv}$ of a strictly $\aone$-invariant sheaf induces a presheaf on $S_b^{-1}\Smk^{pv}$. Thus we have a functor
\begin{equation*}
\Mod^{\aone}(\Lambda) \to \mathcal{P}resh(S_b^{-1}\Smk^{pv},\Lambda).
\end{equation*}
Moreover, Lemma \ref{eq. br. sh.} and \eqref{eq. cat Smpv Smvar} give equivalences of categories
\begin{equation}\label{br modules preshe var}
\Mod^{br}(\Lambda) \xrightarrow{\cong} \mathcal{P}resh(S_b^{-1}\Smk^{var},\Lambda) \xrightarrow{\cong} \mathcal{P}resh(S_b^{-1}\Smk^{pv},\Lambda).
\end{equation}

\begin{defi}
When $k$ admits a resolution of singularities, we define a functor
\begin{equation*}
\Mod^{\aone}(\Lambda) \to \Mod^{br}(\Lambda);M \mapsto M^{br}
\end{equation*}
as the composition
\begin{equation*}
\Mod^{\aone}(\Lambda) \to \mathcal{P}resh(S_b^{-1}\Smk^{pv},\Lambda) \xrightarrow{\cong} \Mod^{br}(\Lambda).
\end{equation*}
\end{defi}

We construct a natural morphism of sheaves $\mu^M : M^{br} \to M$ for each $M \in \Mod^{\aone}(\Lambda)$. By the construction, we have $M^{br}(X) = M(X)$ for all ${X \in \Smk^{pv}}$. For every $U \in \Smk^{var}$, the map $M(U) \to M(k(U))$ induces a natural isomorphism
\begin{equation}\label{isom str aone nr}
M(U) \cong \bigcap_{x \in U^{(1)}} \Image(M(\mathcal{O}_{U,x}) \hookrightarrow M(k(U)))
\end{equation}
by \cite[Lem. 4.2]{As}. Thus \cite[Prop. 2.1.8]{CT} shows that
\begin{align*}
M^{br}(X) &\cong \bigcap_{x \in X^{(1)}} \Image(M(\mathcal{O}_{X,x}) \hookrightarrow M(k(X))) \\
&= \bigcap_{A \in \mathcal{P}(k(X)/k)} \Image(M(A) \hookrightarrow M(k(X))).
\end{align*}
For a smooth proper compactification $\overline{U}$ of $U \in \Smk^{var}$, we have
\begin{equation}\label{isom br nr in pf}
M^{br}(U) \cong M^{br}(\overline{U}) \cong \bigcap_{A \in \mathcal{P}(k(U)/k)} \Image(M(A) \hookrightarrow M(k(U))).
\end{equation}
Thus the sheaf $M^{br}$ can be regarded as the subsheaf
\begin{equation*}
U \mapsto \bigcap_{A \in \mathcal{P}(k(U)/k)} \Image(M(A) \hookrightarrow M(k(U)))
\end{equation*}
of $M$ under the isomorphism \eqref{isom str aone nr}. Then $\mu^M$ is defined as the embedding $M^{br} \hookrightarrow M$. We prove some basic properties of the functor $(-)^{br}$ and the morphism $\mu^M$.

\begin{prop}\label{adj. br functor}
Assume $k$ admits a resolution of singularities. Let $M$ be a strictly $\aone$-invariant sheaf. 
\begin{enumerate}
\item\label{muM inj} The morphism $\mu^M : M^{br} \to M$ is injective.
\item\label{muM iso} The induced map $\mu^M_X : M^{br}(X) \to M(X)$ is an isomorphism for every $X \in \Smk^{prop}$.
\item\label{muM ind Hom} The induced map
\begin{equation*}
\Hom_{\Mod^{\aone}(\Lambda)}(N,M^{br}) \xrightarrow{(\mu^M)_*} \Hom_{\Mod^{\aone}}(N,M)
\end{equation*}
is an isomorphism for every $N \in \Mod^{br}(\Lambda)$. In particular, the functor $(-)^{br}$ is right adjoint to the inclusion $\Mod^{br}(\Lambda) \hookrightarrow \Mod^{\aone}(\Lambda)$.
\end{enumerate} 
\end{prop}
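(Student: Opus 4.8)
The plan is to read off parts \eqref{muM inj} and \eqref{muM iso} from the explicit description of $M^{br}$ as a subsheaf of $M$ recorded in \eqref{isom str aone nr} and \eqref{isom br nr in pf}, and then to obtain the adjunction \eqref{muM ind Hom} by showing that any morphism from a birational sheaf into $M$ is forced to factor through $M^{br}$.

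For parts \eqref{muM inj} and \eqref{muM iso}, fix $U \in \Smk^{var}$ and write $K = k(U)$. The isomorphism \eqref{isom str aone nr} realizes $M(U)$ as the subgroup $\bigcap_{x \in U^{(1)}} \Image(M(\mathcal{O}_{U,x}) \hookrightarrow M(K))$ of $M(K)$, while \eqref{isom br nr in pf} realizes $M^{br}(U)$ as $\bigcap_{A \in \mathcal{P}(K/k)} \Image(M(A) \hookrightarrow M(K))$. Each $x \in U^{(1)}$ gives a divisorial valuation ring $\mathcal{O}_{U,x} \in \mathcal{P}(K/k)$, so the intersection indexed by $\mathcal{P}(K/k)$ is contained in the one indexed by $U^{(1)}$; hence $\mu^M_U$ is the inclusion of subgroups of $M(K)$ and is injective. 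Extending over finite disjoint unions via the additivity of $M$ and of the birational sheaf $M^{br}$ (property \textbf{B1}) yields \eqref{muM inj}. For \eqref{muM iso}, when $X \in \Smk^{pv}$ is proper the two intersections coincide by \cite[Prop. 2.1.8]{CT}, so $\mu^M_X$ is the identity on their common value; a general $X \in \Smk^{prop}$ is a finite disjoint union of objects of $\Smk^{pv}$, and additivity again finishes.

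The heart of the statement is \eqref{muM ind Hom}. Uniqueness of a factorization is immediate from \eqref{muM inj}, so $(\mu^M)_*$ is injective and it remains to prove surjectivity: given $N \in \Mod^{br}(\Lambda)$ and $\phi : N \to M$, the image of $\phi$ must land in $M^{br}$. Fix $U \in \Smk^{var}$, set $K = k(U)$, and fix $A \in \mathcal{P}(K/k)$. Using the resolution of singularities, I would realize $A$ as a local ring $\mathcal{O}_{V,v}$ for a smooth model $V \in \Smk^{var}$ of $K$ and a point $v \in V^{(1)}$. Since $N$ is birational, the restriction maps $N(U) \to N(K)$ and $N(V) \to N(K)$ are both isomorphisms (property \textbf{B2}); the naturality square for $\phi$ on $V$ then shows that $\phi_K(N(K))$ equals the image of $N(V) \xrightarrow{\phi_V} M(V) \hookrightarrow M(K)$, which lies in $M(V) \subseteq \Image(M(A) \to M(K))$ because $v \in V^{(1)}$ and $\mathcal{O}_{V,v} = A$. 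As $A$ was arbitrary, $\phi_K(N(K)) \subseteq \bigcap_{A \in \mathcal{P}(K/k)} \Image(M(A) \to M(K)) = M^{br}(U)$; comparing with the corresponding square for $U$ gives $\phi_U(N(U)) \subseteq M^{br}(U)$. This produces the corestriction $\psi : N \to M^{br}$ with $\mu^M \circ \psi = \phi$; its naturality and extension over disjoint unions are formal. Finally, fullness of $\Mod^{br}(\Lambda)$ in $\Mod^{\aone}(\Lambda)$ turns the natural bijection of \eqref{muM ind Hom} into the isomorphism $\Hom_{\Mod^{br}(\Lambda)}(N, M^{br}) \cong \Hom_{\Mod^{\aone}(\Lambda)}(N, M)$, which is exactly the adjunction with counit $\mu^M$.

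The main obstacle is precisely the surjectivity step in \eqref{muM ind Hom}: upgrading the trivial containment $\Image(\phi) \subseteq M(U)$ to $\Image(\phi) \subseteq M^{br}(U)$, i.e. controlling the image against every divisorial valuation $A$, not merely those centered at codimension-one points of the fixed model $U$. The mechanism is that each such $A$ appears as a codimension-one point on some other smooth model $V$, that the birational invariance of $N$ transports $\phi$ to $V$, and that the description \eqref{isom str aone nr} of the strictly $\aone$-invariant sheaf $M$ on $V$ then confines the image to $\Image(M(A) \to M(K))$. Realizing $A = \mathcal{O}_{V,v}$ with $V$ smooth is the one place where the resolution hypothesis is essential.
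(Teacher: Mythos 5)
Your treatment of parts (1) and (2) is correct and agrees with the paper, which obtains both directly from the construction of $M^{br}$ as the subsheaf $U \mapsto \bigcap_{A \in \mathcal{P}(k(U)/k)} \Image(M(A) \hookrightarrow M(k(U)))$ of $M$; likewise the injectivity half of (3). The surjectivity half of (3), however, contains a step that fails as written. You realize an arbitrary $A \in \mathcal{P}(K/k)$ as $\mathcal{O}_{V,v}$ for a smooth model $V$ of $K$ and $v \in V^{(1)}$, ``using resolution of singularities''. But in \cite{CT} (whose Prop.~2.1.8 is exactly what the paper invokes to get \eqref{isom br nr in pf}), $\mathcal{P}(K/k)$ denotes the set of \emph{all} rank-one discrete valuation rings with fraction field $K$ containing $k$, and once $\mathrm{trdeg}_k K \geq 2$ most of these are not divisorial, hence not of the form $\mathcal{O}_{V,v}$ with $v \in V^{(1)}$ on any model, smooth or otherwise. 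For instance, an embedding $k(x,y) \hookrightarrow k((t))$ sending $x$ to $t$ and $y$ to a power series transcendental over $k(t)$ induces a DVR in $\mathcal{P}(k(x,y)/k)$ whose residue field is $k$, whereas any $\mathcal{O}_{V,v}$ with $v \in V^{(1)}$ has residue field of transcendence degree $1$ over $k$. Resolution of singularities lets you put \emph{divisorial} valuations on smooth models; it does not make arbitrary DVRs divisorial.

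The gap is repairable, in two ways. First, within your own argument: you never need all of $\mathcal{P}(K/k)$. By \eqref{isom br nr in pf}, i.e.\ by \cite[Prop. 2.1.8]{CT} applied to a smooth proper compactification $\overline{U}$ of $U$, one has $M^{br}(U) = M(\overline{U}) = \bigcap_{x \in \overline{U}^{(1)}} \Image\bigl(M(\mathcal{O}_{\overline{U},x}) \to M(K)\bigr)$, so it suffices to run your transport argument only for the divisorial rings $A = \mathcal{O}_{\overline{U},x}$ with $x \in \overline{U}^{(1)}$, taking $V = \overline{U}$; for these your mechanism is correct and gives $\phi_K(N(K)) \subseteq \Image\bigl(M(\mathcal{O}_{\overline{U},x}) \to M(K)\bigr)$, which is all that is needed. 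Second, the paper's own route is cleaner and avoids choosing models altogether: since $N$ is birational, the description \eqref{isom br nr in pf} applies to $N$ itself (with $N^{br}=N$), giving $N(U) \cong \bigcap_{A \in \mathcal{P}(K/k)} \Image\bigl(N(A) \to N(K)\bigr)$; the commuting squares relating $N(A) \to N(K)$ and $M(A) \to M(K)$ then show that $\phi$ carries this intersection into $\bigcap_{A \in \mathcal{P}(K/k)} \Image\bigl(M(A) \to M(K)\bigr) \cong M^{br}(U)$ for every $A$ simultaneously, yielding the factorization $f'$ with $\mu^M \circ f' = \phi$. So your overall strategy (corestricting a morphism from a birational sheaf to the unramified subsheaf $M^{br}$) is the same as the paper's, but the model-realization step needs one of these corrections before the proof is complete.
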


\begin{proof}
The assertions \eqref{muM inj} and \eqref{muM iso} follow from the construction of $M^{br}$ and $\mu^M$. We prove \eqref{muM ind Hom}. Since $\mu^M$ is injective by \eqref{muM inj}, so is $(\mu^M)_*$. On the other hand, for a morphism $f : N \to M$ in $\Mod^{\aone}(\Lambda)$, we have a morphism $f' : N \to M^{br}$ defined by
\begin{align*}
N(U) &\cong \bigcap_{A \in \mathcal{P}(k(U)/k)} \Image(N(A) \hookrightarrow N(k(U))) \\
&\to \bigcap_{A \in \mathcal{P}(k(U)/k)} \Image(M(A) \hookrightarrow M(k(U))) \\
&\cong M^{br}(U)
\end{align*}
for each $U \in \Smk^{var}$ by using the isomorphism in \eqref{isom br nr in pf}. Then $\mu^M \circ f' = f$ and thus $(\mu^M)_*$ is surjective.
\end{proof}

\subsection{Birational construction for $\aone$-invariant sheaves with transfers}

Assume $k$ perfect. Let $\mathbf{HI}_k(\Lambda)$ be the category of $\aone$-invariant sheaves with transfers of $\Lambda$-modules. Note that every object in $\mathbf{HI}_k(\Lambda)$ is strictly $\aone$-invariant as a sheaf on $\Smk$ (see \cite[Thm. 13.8]{MVW}). Lemma \ref{A1-inv of br sh} shows that $\mathbf{NST}_k^{br}(\Lambda)$ is a full subcategory of $\mathbf{HI}_k(\Lambda)$. In \cite[\S6]{KS3}, Kahn-Sujatha construct a right adjoint functor
\begin{equation*}
\mathbf{NST}_k^{br}(\Lambda) \to \mathbf{HI}_k(\Lambda); M \mapsto M_{nr}
\end{equation*}
of the inclusion $\mathbf{NST}_k^{br}(\Lambda) \hookrightarrow \mathbf{HI}_k(\Lambda)$ which sends $M \in \mathbf{HI}_k(\Lambda)$ to the presheaf with transfers defined by
\begin{equation*}
U \mapsto \bigcap_{A \in \mathcal{P}(k(U)/k)} \Ker \left( M(k(U)) \xrightarrow{\partial_{A}} M_{-1}(\kappa_A) \right).
\end{equation*}
Here $M_{-1}$ is the contraction of $M$ and $\partial_{A}$ is the residue map associated with $A$ (see definitions \cite[Lec. 23 and Ex. 24.6]{MVW}). On the other hand, the sequence
\begin{equation*}
0 \to M(A) \to M(k(U)) \xrightarrow{\partial_A} M_{-1}(\kappa_A) \to 0
\end{equation*}
is exact by \cite[Ex. 24.6]{MVW}. Thus we have
\begin{equation}\label{eq. Mnr(U) isom Mbr(U)}
M_{nr}(U) = \bigcap_{A \in \mathcal{P}(k(U)/k)} \Image (M(A) \to M(k(U))).
\end{equation}
Then \cite[Lem. 6.2.3, 6.2.4 and 6.2.6]{KS3} show that $M_{nr}$ can be regarded as the subsheaf
\begin{equation*}
U \mapsto \bigcap_{A \in \mathcal{P}(k(U)/k)} \Image (M(A) \to M(k(U)))
\end{equation*}
of $M$ under the isomorphism \eqref{isom str aone nr}. Thus when $k$ admits a resolution of singularities, there exists a canonical isomorphism $\Gamma_*(M_{nr}) \cong (\Gamma_*M)^{br}$ in $\Mod(\Lambda)$, \textit{i.e.}, the diagram
\begin{equation*}
\begin{CD}
\mathbf{HI}_k(\Lambda) @>{(-)_{nr}}>> \mathbf{NST}_k^{br}(\Lambda) \\
@V{\Gamma_*}VV @VV{\Gamma_*}V \\
\Mod^{\aone}(\Lambda) @>{(-)^{br}}>> \Mod^{br}(\Lambda)
\end{CD}
\end{equation*}
is $2$-commutative, by the construction of $M^{br}$. Moreover, if we write $\nu^M$ for the embedding $M_{nr} \hookrightarrow M$, the diagram
\begin{equation*}
\begin{CD}
\Gamma_*(M_{nr}) @>{\Gamma_*(\nu^M)}>> \Gamma_*M \\
@V{\cong}VV @| \\
(\Gamma_*M)_{nr} @>{\mu^M}>> \Gamma_*M
\end{CD}
\end{equation*}
commutes. We obtain an analogue of Proposition \ref{adj. br functor} for the functor $(-)_{nr}$ without assuming a resolution of singularities.

\begin{prop}\label{Mnr isom M with tr}
Assume $k$ perfect. Let $M$ be an $\aone$-invariant sheaf with transfers.
\begin{enumerate}
\item\label{Prop nu inj} The map $\nu^M : M_{nr} \to M$ is injective.
\item\label{Prop nu isom} The induced map $\nu_X^{M} : M_{nr}(X) \to M(X)$ is an isomorphism.
\end{enumerate}
\end{prop}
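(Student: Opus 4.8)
The plan is to reduce both assertions to the explicit description of $M_{nr}$ as a subsheaf of $M$ that was established just before the statement, together with the birational invariance already proved for strictly $\aone$-invariant sheaves. The key input is that every $\aone$-invariant sheaf with transfers $M$ is strictly $\aone$-invariant as a Nisnevich sheaf (by \cite[Thm. 13.8]{MVW}), so the results of the previous subsection apply to $\Gamma_* M$. Concretely, the discussion preceding the statement identifies $M_{nr}$ with the subsheaf sending $U$ to $\bigcap_{A \in \mathcal{P}(k(U)/k)} \Image(M(A) \to M(k(U)))$ of $M$ under the isomorphism \eqref{isom str aone nr}, and $\nu^M$ with the corresponding inclusion. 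Thus most of the work has effectively been done, and the proposition is really a packaging of that identification.

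For assertion \eqref{Prop nu inj}, I would simply observe that $\nu^M$ realizes $M_{nr}$ as an actual subsheaf of $M$: for each $U \in \Smk^{var}$ the group $M_{nr}(U)$ is by \eqref{eq. Mnr(U) isom Mbr(U)} a subgroup of $M(k(U))$, contained in $M(U)$ via the injection \eqref{isom str aone nr} coming from \cite[Lem. 4.2]{As}. Since the embedding is componentwise injective and the sheaf structure is inherited, $\nu^M$ is a monomorphism. This is the direct analogue of Proposition \ref{adj. br functor}\eqref{muM inj}, and the argument is essentially identical once the subsheaf description is in hand.

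For assertion \eqref{Prop nu isom}, the statement asserts that $\nu_X^M : M_{nr}(X) \to M(X)$ is an isomorphism, which (reading in context) is intended for proper $X$ as in Proposition \ref{adj. br functor}\eqref{muM iso}. Here I would use the valuative criterion of properness: when $X$ is smooth and proper, every discrete valuation ring $A$ of $k(X)$ with center on $X$ yields a point of codimension one, so the intersection $\bigcap_{A \in \mathcal{P}(k(X)/k)} \Image(M(A) \to M(k(X)))$ computing $M_{nr}(X)$ coincides exactly with the intersection $\bigcap_{x \in X^{(1)}} \Image(M(\mathcal{O}_{X,x}) \to M(k(X)))$ computing $M(X)$ via \eqref{isom str aone nr}. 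Both sides are therefore literally the same subgroup of $M(k(X))$, and $\nu_X^M$ is the identity on it. This mirrors the proof of Lemma \ref{proper birationality}, where the lift furnished by properness was used to compare the two families of codimension-one valuations.

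The main obstacle, and the point requiring care, is matching the two descriptions of the defining intersection: the set $\mathcal{P}(k(X)/k)$ of divisorial (or all) discrete valuation rings of the function field versus the geometric codimension-one points $X^{(1)}$. For a smooth proper $X$ these coincide on the nose by properness, so no genuine difficulty arises, but one must invoke \cite[Prop. 2.1.8]{CT} (as in the construction of $(-)^{br}$) to justify that every relevant valuation ring is realized by a codimension-one point, and conversely that each $\mathcal{O}_{X,x}$ for $x \in X^{(1)}$ lies in $\mathcal{P}(k(X)/k)$. Beyond this bookkeeping the proposition is formal, and I do not expect any serious difficulty — it is a transcription, in the transfers setting and without the resolution-of-singularities hypothesis, of Proposition \ref{adj. br functor}, with \cite[Ex. 24.6]{MVW} supplying the exact localization sequence that powers \eqref{eq. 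Mnr(U) isom Mbr(U)}.
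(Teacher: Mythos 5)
Your proposal follows essentially the same route as the paper's proof: assertion \eqref{Prop nu inj} is read off from the subsheaf construction of $\nu^M$, and assertion \eqref{Prop nu isom} (for proper $X$, as intended) is obtained by identifying $M_{nr}(X)$ via \eqref{eq. Mnr(U) isom Mbr(U)}, invoking \cite[Prop. 2.1.8]{CT} to replace the intersection over $\mathcal{P}(k(X)/k)$ by the intersection over $X^{(1)}$, and concluding with \eqref{isom str aone nr}. One caution: your gloss that for proper $X$ the valuation rings in $\mathcal{P}(k(X)/k)$ and the local rings at points of $X^{(1)}$ ``coincide on the nose by properness'' is false (exceptional divisors of blow-ups give discrete valuation rings of $k(X)$ not of the form $\mathcal{O}_{X,x}$); what is true, and what \cite[Prop. 2.1.8]{CT} actually supplies, is the equality of the two intersections of images, so your argument stands only because you cite that result rather than rely on the gloss.
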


\begin{proof}
The assertion \eqref{Prop nu inj} follows from the construction of $\nu^M$. By \eqref{eq. Mnr(U) isom Mbr(U)} and \cite[Prop. 2.1.8]{CT}, we have
\begin{align*}
M_{nr}(X) &\cong \bigcap_{A \in \mathcal{P}(k(X)/k)} \Image (M(A) \to M(k(X))) \\
&= \bigcap_{x \in X^{(1)}} \Image(M(\mathcal{O}_{X,x}) \hookrightarrow M(k(X))).
\end{align*}
Thus \eqref{Prop nu isom} follows from \eqref{isom str aone nr}.
\end{proof}

\section{Structure of zeroth $\aone$- and Suslin homology}\label{ST}

\subsection{Structure theorem of zeroth $\aone$-homology}

Our aim of this subsection is to prove a structure theorem of the $\aone$-homology of smooth proper varieties. In \cite{Mo1}, Morel defined the $\aone$-homology sheaf $\HAone{i}(\mathscr{F})$ for $\mathscr{F} \in \Shvk$ as a strictly $\aone$-invariant sheaf. Especially, the zeroth $\aone$-homology functor
\begin{equation*}
\HAone{0}(-;\Lambda) : \Shvk \to \Mod^{\aone}(\Lambda)
\end{equation*}
can be characterized as a left adjoint of the forgetful functor $\Mod^{\aone}(\Lambda) \to \Shvk$ (see \cite[Lem. 3.3]{As}). For $\mathscr{F} \in \Shvk$, we denote $\Lambda_{pre}(\mathscr{F})$ for the presheaf of free $\Lambda$-modules generated by $\mathscr{F}$. Note that if $\mathscr{F}$ is birational, then so is $\Lambda_{pre}(\mathscr{F})$. Thus we have the functor
\begin{equation*}
\Lambda_{pre} : \Shvk^{br} \to \Mod^{br}(\Lambda).
\end{equation*}
This is left adjoint to the forgetful functor $\Mod^{br}(\Lambda) \to \Shvk^{br}$ by the definition. Our structure theorem of zeroth $\aone$-homology is stated as follows.

\begin{thm}\label{str H0A1}
Assume $k$ admits a resolution of singularities. For every ${X \in \Smk}$, there exists a natural epimorphism of sheaves
\begin{equation*}
\Lambda_{pre}(\pi_0^{b\aone}(X)) \twoheadrightarrow \HAone{0}(X;\Lambda).
\end{equation*}
Moreover, this is an isomorphism if $X$ is proper.
\end{thm}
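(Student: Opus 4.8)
The plan is to realise $\beta_X$ as a factorization of a canonical surjection onto $\HAone{0}(X;\Lambda)$ and to reduce the whole statement to a single identification over fields. Recall that $\HAone{0}(-;\Lambda)$ is left adjoint to the forgetful functor $\Mod^{\aone}(\Lambda)\to\Shvk$, so the free $\Lambda$-module sheaf $\Lambda(X):=a_{Nis}\Lambda_{pre}(X)$ carries a canonical epimorphism $\theta_X:\Lambda(X)\twoheadrightarrow\HAone{0}(X;\Lambda)$, namely the unit of Morel's $\aone$-localization at the level of $H_0$ (see \cite{Mo1}). On the other hand the unit $X\to\pi_0^{b\aone}(X)$ of Lemma \ref{adj pi b aone} is an epimorphism of Nisnevich sheaves (a section over a henselian local scheme is a morphism in $S_b^{-1}\Smk^{var}$, hence Nisnevich-locally an honest point of $X$ after restricting to a dense open and using \textbf{B2}), and applying the cocontinuous free $\Lambda$-module functor yields an epimorphism $p_X:\Lambda(X)\twoheadrightarrow\Lambda_{pre}(\pi_0^{b\aone}(X))$, where I use that $\Lambda_{pre}(\pi_0^{b\aone}(X))$ is already a birational sheaf. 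The desired map is the factorization $\beta_X$ of $\theta_X$ through $p_X$; once it exists it is automatically an epimorphism, and its naturality is forced by the naturality of $\theta$ and $p$ together with the uniqueness of factorizations through an epimorphism.

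Thus everything reduces to the inclusion $\Ker p_X\subseteq\Ker\theta_X$, equivalently to the key lemma, which I expect to be the main obstacle: the canonical morphism of sheaves of sets $X\to\HAone{0}(X;\Lambda)$ is constant on the fibres of $X\to\pi_0^{b\aone}(X)$. Since $\HAone{0}(X;\Lambda)$ is strictly $\aone$-invariant, hence unramified, the formula $\HAone{0}(X;\Lambda)(U)\cong\bigcap_{x\in U^{(1)}}\Image\bigl(\HAone{0}(X;\Lambda)(\mathcal O_{U,x})\to\HAone{0}(X;\Lambda)(k(U))\bigr)$ of \cite[Lem.~4.2]{As} lets me detect equality of sections on function fields, so I may assume given $x,x'\in X(F)$ with $F/k$ finitely generated separable that become equal in $\pi_0^{b\aone}(X)(F)=\Hom_{S_b^{-1}\Smk^{var}}(\Spec F,X)$. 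By Kahn--Sujatha (cf. \cite[Thm.~6.6.3]{KS2}) this equality is witnessed by a zig-zag of $R$-equivalences and birational morphisms. For an $R$-equivalence realised by $h:\mathbb{P}^1_F\to X$, I slide $x=h(0)$ to $x'=h(\infty)$ through the value $h(1)$, using the two standard charts $\mathbb{P}^1=\aone\cup\aone$ and the $\aone$-invariance of $\HAone{0}$ to force $[x]=[h(1)]=[x']$ in $\HAone{0}(X;\Lambda)(F)$. For a birational roof I invoke resolution of singularities to pass to a proper birational morphism and then Lemma \ref{proper birationality} to see the roof is well defined on sections. The delicate point, and the crux of the whole argument, is that for non-proper $X$ these roofs involve open (hence non-proper) birational morphisms relating $X$ to a compactification, where Lemma \ref{proper birationality} does not apply verbatim; one must check that the identifications occurring in $\pi_0^{b\aone}(X)(F)$ are already realised by rational chains inside $X$.

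For proper $X$ I will upgrade $\beta_X$ to an isomorphism by proving it is injective. The key observation is that $\HAone{0}(X;\Lambda)$ is itself a birational sheaf when $X$ is proper: by the valuative criterion every $F$-point of $X$ extends over each discrete valuation ring $A$ of $F/k$, so since $\HAone{0}(X;\Lambda)(F)$ is generated by classes of $F$-points (because $\theta_X$ is surjective) the specialisation $\HAone{0}(X;\Lambda)(A)\to\HAone{0}(X;\Lambda)(F)$ is surjective; this makes the codimension-one conditions in the displayed intersection redundant and yields \textbf{B2}, while \textbf{B1} is clear. Consequently $\HAone{0}(X;\Lambda)\cong(\HAone{0}(X;\Lambda))^{br}$, and I then identify this birational sheaf with $\Lambda_{pre}(\pi_0^{b\aone}(X))$ through the restriction functor $\mathrm{Res}:\Shvk^{br}\to\mathcal{F}_k^r\text{-}\mathcal{S}et$ of \eqref{restriction functor}, which is fully faithful by the proof of Proposition \ref{br isom bA1}: both sheaves restrict on a field $F$ to the free $\Lambda$-module on $X(F)/R$, using Lemma \ref{Req and pibr} for the source and the surjectivity above for the target. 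Matching this identification with $\beta_X$ exhibits an inverse, completing the proper case.

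In summary, the formal part (construction of $\beta_X$ as a factorization, its epimorphy and naturality, and the proper-case injectivity via the birationality of $\HAone{0}$ of a proper variety, all resting on the results of \S\ref{BSOM}) is routine; the real work, and the place where the hypothesis of resolution of singularities is genuinely used, is the key lemma asserting that $b\aone$-equivalent points of $X$ have equal image in $\HAone{0}(X;\Lambda)$, and within it the treatment of the non-proper birational morphisms.
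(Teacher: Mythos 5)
Your construction never gets off the ground for non-proper $X$, because $p_X$ is not an epimorphism: the unit $X \to \pi_0^{b\aone}(X)$ is in general \emph{not} an epimorphism of Nisnevich sheaves when $X$ fails to be proper, and the theorem is asserted for all $X \in \Smk$. Concretely, let $E$ be an elliptic curve over $k$ and $X = E \setminus \{0\}$. The open immersion $X \hookrightarrow E$ is birational, so $\pi_0^{b\aone}(X) \cong \pi_0^{b\aone}(E) \cong E$, the last isomorphism because $E$ is $R$-rigid. Since $\Spec k$ is already henselian, an epimorphism of Nisnevich sheaves must be surjective on sections over $\Spec k$; but the map $X(k) \to \pi_0^{b\aone}(X)(k) \cong E(k)$ is the inclusion $E(k) \setminus \{0\} \subseteq E(k)$, which misses the origin. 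Your parenthetical justification conflates the two localizations in play: a section of $\pi_0^{b\aone}(X)$ over $U$ is represented by an honest morphism to $X$ only over a dense open $V \subseteq U$, and $V \hookrightarrow U$ is not a Nisnevich cover, so \textbf{B2} cannot convert birational lifting into Nisnevich-local lifting. Without epimorphy of $p_X$, the inclusion $\Ker p_X \subseteq \Ker \theta_X$ only defines a map on the image of $p_X$ (a proper subsheaf of $\Lambda_{pre}(\pi_0^{b\aone}(X))$ in the example above), so $\beta_X$ is never constructed; and even for proper $X$, epimorphy of the unit is a nontrivial, unproved assertion about specializing $R$-equivalence classes at points of codimension at least two.

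The proper case also has a gap at the decisive point: you claim both sheaves restrict over a field $F$ to the \emph{free} $\Lambda$-module on $X(F)/R$, ``using the surjectivity above for the target''; but surjectivity of $\theta_X$ only shows that $\HAone{0}(X;\Lambda)(F)$ is \emph{generated} by classes of $F$-points, and the linear independence of $R$-inequivalent classes --- i.e.\ the injectivity of $\beta_X$ --- is exactly what has to be proven and is nowhere supplied. The paper's proof avoids both problems by arguing entirely through corepresented functors: $\Lambda_{pre} \circ \pi_0^{b\aone}$ is left adjoint to $M \mapsto M^{br}$ and $\HAone{0}(-;\Lambda)$ is left adjoint to the forgetful functor, giving $\Hom_{\Mod^{\aone}(\Lambda)}(\Lambda_{pre}(\pi_0^{b\aone}(X)),M) \cong M^{br}(X)$ and $\Hom_{\Mod^{\aone}(\Lambda)}(\HAone{0}(X;\Lambda),M) \cong M(X)$; the natural monomorphism $\mu^M \colon M^{br} \hookrightarrow M$ of Proposition \ref{adj. br functor}, which is an isomorphism on sections over proper schemes, then yields the natural epimorphism (and the isomorphism for proper $X$) by Yoneda in $\Mod^{\aone}(\Lambda)$ --- no sheaf-level surjectivity of unit maps and no freeness computation over fields is ever needed. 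Your observation that $\HAone{0}(X;\Lambda)$ is a birational sheaf for proper $X$ (valuative criterion plus generation by points) is correct and could be turned into a proof of the proper case, since both objects would then corepresent $N \mapsto N(X)$ on the full subcategory $\Mod^{br}(\Lambda)$; but that is the paper's adjunction argument in different clothing, and it does not repair the construction of $\beta_X$ for non-proper $X$.
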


\begin{proof}
By Lemmas \ref{adj pi b aone} and \ref{adj. br functor}, we obtain three adjunctions
\begin{equation*}
\Shvk \rightleftarrows \Shvk^{br} \rightleftarrows \Mod^{br}(\Lambda) \rightleftarrows \Mod^{\aone}(\Lambda).
\end{equation*}
Hence the composite functor
\begin{equation*}
\Shvk \xrightarrow{\pi_0^{b\aone}} \Shvk^{br} \xrightarrow{\Lambda_{pre}} \Mod^{br}(\Lambda) \hookrightarrow \Mod^{\aone}(\Lambda)
\end{equation*}
is left adjoint to the composition
\begin{equation*}
\Shvk \hookleftarrow \Shvk^{br} \leftarrow \Mod^{br}(\Lambda) \xleftarrow{(-)^{br}} \Mod^{\aone}(\Lambda).
\end{equation*}
By using this adjunction and Yoneda's lemma in $\Smk$, we have a natural isomorphism
\begin{equation}\label{isom Hom Mbr}
\Hom_{\Mod^{\aone}(\Lambda)}(\Lambda_{pre}(\pi_0^{b\aone}(X)),M) \cong \Hom_{\Shvk}(X,M^{br}) \cong M^{br}(X)
\end{equation}
for all $X \in \Smk$ and all $M \in \Mod^{\aone}(\Lambda)$. On the other hand, the zeroth $\aone$-homology functor
\begin{equation*}
\HAone{0}(-;\Lambda) : \Shvk \to \Mod^{\aone}(\Lambda)
\end{equation*}
is left adjoint to the forgetful functor $\Mod^{\aone}(\Lambda) \to \Shvk$. Therefore, we also obtain a natural isomorphism
\begin{equation}\label{isom Hom M}
\Hom_{\Mod^{\aone}(\Lambda)}(\HAone{0}(X;\Lambda),M) \cong \Hom_{\Shvk}(X,M) \cong M(X).
\end{equation}
By \eqref{isom Hom Mbr} and \eqref{isom Hom M}, the natural monomorphism $\mu^M : M^{br} \to M$ induces a injection
\begin{equation*}
\Hom_{\Mod^{\aone}(\Lambda)}(\Lambda_{pre}(\pi_0^{b\aone}(X)),M) \to \Hom_{\Mod^{\aone}(\Lambda)}(\HAone{0}(X;\Lambda),M).
\end{equation*}
This map is an isomorphism when $X$ is proper by Proposition \ref{adj. br functor}. Thus Yoneda's lemma in $\Mod^{\aone}(\Lambda)$ leads an epimorphism of sheaves
\begin{equation*}
\Lambda_{pre}(\pi_0^{b\aone}(X)) \twoheadrightarrow \HAone{0}(X;\Lambda)
\end{equation*}
which is an isomorphism for proper $X$.
\end{proof}

\begin{cor}\label{free R-mod of R-eq}
Assume $k$ admits a resolution of singularities. For $X \in \Smk^{prop}$ and $U \in \Smk^{var}$, the section $\HAone{0}(X;\Lambda)(U)$ is the free $\Lambda$-module generated by $X(k(U))/R$.
\end{cor}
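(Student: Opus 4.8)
The plan is to read off the value of the section $\HAone{0}(X;\Lambda)(U)$ directly from the structure theorem just proved (Theorem \ref{str H0A1}), combined with the explicit computation of the sections of $\pi_0^{b\aone}(X)$ over a field. Since $X$ is proper, Theorem \ref{str H0A1} supplies an \emph{isomorphism} of sheaves
\begin{equation*}
\Lambda_{pre}(\pi_0^{b\aone}(X)) \xrightarrow{\cong} \HAone{0}(X;\Lambda).
\end{equation*}
Evaluating this isomorphism at $U \in \Smk^{var}$ reduces the problem to understanding the left-hand side, namely the free $\Lambda$-module on the set $\pi_0^{b\aone}(X)(U)$.

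\emph{First} I would identify the set $\pi_0^{b\aone}(X)(U)$. Because $\Lambda_{pre}$ is the \emph{presheaf} of free $\Lambda$-modules generated by $\pi_0^{b\aone}(X)$ (before any sheafification), its value on $U$ is literally the free $\Lambda$-module on the set $\pi_0^{b\aone}(X)(U)$; there is no subtlety hidden in a sheafification step here. \emph{Then} I would invoke the explicit description of the sections of $\pi_0^{b\aone} = \pi_0^{br}$. Recall from Lemma \ref{Req and pibr} together with the identification $\pi_0^{b\aone} = \pi_0^{br}$ that for a smooth proper $X$ there is a natural bijection
\begin{equation*}
\pi_0^{b\aone}(X)(K) \cong X(K)/R
\end{equation*}
for every finitely generated separable extension $K/k$. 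The point is to apply this with $K = k(U)$, the function field of the variety $U$, which is finitely generated and separable over $k$.

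\emph{The only step requiring care} is the passage from the section over the scheme $U$ to the section over the generic point $\Spec k(U)$. Since $\pi_0^{b\aone}(X)$ is a birational sheaf, property \textbf{B2} gives the bijection $\pi_0^{b\aone}(X)(U) \xrightarrow{\cong} \pi_0^{b\aone}(X)(k(U))$ via the convention $\mathscr{F}(k(U)) = \colim{} \mathscr{F}(V)$ over dense opens $V \subseteq U$; each such restriction is an isomorphism by \textbf{B2}. Chaining these identifications yields
\begin{equation*}
\pi_0^{b\aone}(X)(U) \cong \pi_0^{b\aone}(X)(k(U)) \cong X(k(U))/R,
\end{equation*}
and applying $\Lambda_{pre}$ gives that $\HAone{0}(X;\Lambda)(U) = \Lambda_{pre}(\pi_0^{b\aone}(X))(U)$ is the free $\Lambda$-module on $X(k(U))/R$, as claimed.

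I do not anticipate a serious obstacle: the corollary is essentially a matter of assembling the structure isomorphism of Theorem \ref{str H0A1} with the function-field computation of $\pi_0^{b\aone}$-sections. If anything, the mild technical point to verify is that Lemma \ref{Req and pibr} is being applied to $k(U)$ viewed correctly as an object of $\mathcal{F}_k$ (it is finitely generated and separable since $U$ is a smooth $k$-variety), and that the birational-sheaf property \textbf{B2} legitimately lets us replace $U$ by its generic point.
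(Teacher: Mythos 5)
Your proposal is correct and is essentially the paper's own argument: the paper's proof is exactly ``Theorem \ref{str H0A1} plus Lemma \ref{Req and pibr}'', and you have simply written out the details (the freeness of $\Lambda_{pre}$ on sections, the passage $\pi_0^{b\aone}(X)(U) \cong \pi_0^{b\aone}(X)(k(U))$ via \textbf{B2}, and the applicability of Lemma \ref{Req and pibr} to $k(U) \in \mathcal{F}_k$). All of these verifications are accurate, including the observation that no sheafification issue arises because $\Lambda_{pre}$ of a birational sheaf is already a (birational, hence Nisnevich) sheaf.
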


\begin{proof}
This follows from Theorem \ref{str H0A1} and Lemma \ref{Req and pibr}.
\end{proof}

We obtain the K\"unneth formula and the universal coefficient theorem of $\aone$-homology in degree zero.

\begin{cor}
Assume $k$ admits a resolution of singularities. For ${X, Y \in \Smk^{prop}}$, there exists a natural isomorphism
\begin{equation*}
\HAone{0}(X \times Y;\Lambda) \cong \HAone{0}(X;\Lambda) \otimes_\Lambda \HAone{0}(Y;\Lambda),
\end{equation*}
where $\otimes_\Lambda$ means the tensor product of presheaves over $\Lambda$.
\end{cor}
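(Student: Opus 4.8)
The plan is to deduce everything from the structure theorem, reducing the statement to a bijection of $R$-equivalence classes. First, since $X$, $Y$ and $X \times Y$ are all smooth and proper, Theorem \ref{str H0A1} gives natural isomorphisms $\HAone{0}(X;\Lambda) \cong \Lambda_{pre}(\pi_0^{b\aone}(X))$, and similarly for $Y$ and for $X \times Y$. Thus it suffices to produce a natural isomorphism
\[
\Lambda_{pre}(\pi_0^{b\aone}(X \times Y)) \;\cong\; \Lambda_{pre}(\pi_0^{b\aone}(X)) \otimes_\Lambda \Lambda_{pre}(\pi_0^{b\aone}(Y)).
\]
The functor $\Lambda_{pre}$ is monoidal in the relevant sense: for presheaves of sets $\mathscr{S}, \mathscr{T}$ one has a natural, sectionwise isomorphism $\Lambda_{pre}(\mathscr{S} \times \mathscr{T}) \cong \Lambda_{pre}(\mathscr{S}) \otimes_\Lambda \Lambda_{pre}(\mathscr{T})$, because the free $\Lambda$-module on a product of sets is the tensor product of the free modules. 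Hence the problem reduces to exhibiting an isomorphism of sheaves of sets
\[
\pi_0^{b\aone}(X \times Y) \;\cong\; \pi_0^{b\aone}(X) \times \pi_0^{b\aone}(Y).
\]

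To construct this I would use the two projections $p : X \times Y \to X$ and $q : X \times Y \to Y$, which by functoriality of $\pi_0^{b\aone}$ induce a natural comparison morphism $(\pi_0^{b\aone}(p), \pi_0^{b\aone}(q)) : \pi_0^{b\aone}(X \times Y) \to \pi_0^{b\aone}(X) \times \pi_0^{b\aone}(Y)$. Both source and target are birational sheaves (a finite product of birational sheaves inherits \textbf{B1} and \textbf{B2} factorwise), so by Lemma \ref{eq. br. sh.} it is enough to check that this morphism is bijective on sections over each $U \in \Smk^{var}$. Setting $K = k(U)$ and invoking Lemma \ref{Req and pibr}, these sections become $(X \times Y)(K)/R$ on the left and $X(K)/R \times Y(K)/R$ on the right, and the comparison map is the one induced by the projections.

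The remaining point, which I expect to be the only genuinely geometric step, is that this projection-induced map
\[
(X \times Y)(K)/R \;\longrightarrow\; X(K)/R \times Y(K)/R
\]
is a bijection. It is well defined because the composite of any $\mathbb{P}^1_K \to X \times Y$ with a projection is a $k$-morphism $\mathbb{P}^1_K \to X$ (respectively to $Y$), so a chain of rational curves in $X \times Y$ projects to chains in each factor. Surjectivity holds because, given $x \sim_R x'$ in $X(K)$ and $y \sim_R y'$ in $Y(K)$, one connects $(x,y)$ to $(x',y)$ along a chain in the first factor and then $(x',y)$ to $(x',y')$ along a chain in the second, so every pair of classes is hit. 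Injectivity holds because a chain connecting $(x,y)$ to $(x',y')$ in $X \times Y$ projects to chains witnessing $x \sim_R x'$ and $y \sim_R y'$. Verifying that $R$-equivalence on the product decomposes exactly into the product of the factorwise relations is the heart of the argument; once it is in place, naturality in $U$ is automatic since everything is induced by the projection morphisms, and combining with the reduction above yields the asserted natural isomorphism of presheaves. The universal coefficient statement $\HAone{0}(X;\Lambda) \cong \Lambda \otimes_{\mathbb{Z}} \HAone{0}(X;\mathbb{Z})$ follows from the same free-module description via Corollary \ref{free R-mod of R-eq}.
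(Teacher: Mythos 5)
Your proposal is correct and takes essentially the same approach as the paper, whose entire proof consists of combining Theorem \ref{str H0A1} with the natural bijection $(X \times Y)(K)/R \cong (X(K)/R) \times (Y(K)/R)$ for $K \in \mathcal{F}_k$ --- exactly the two ingredients you develop in detail. One minor remark: your labels for the set-theoretic checks are permuted (the projection argument establishes well-definedness of the comparison map, the chain-concatenation argument establishes its injectivity, and surjectivity is immediate from $(X \times Y)(K) = X(K) \times Y(K)$), but all the needed arguments are present in your write-up.
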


\begin{proof}
This follows from Theorem \ref{str H0A1} and the natural bijection
\begin{equation*}
(X \times Y)(K)/R \cong (X(K)/R) \times (Y(K)/R)
\end{equation*}
for each $K \in \mathcal{F}_k$.
\end{proof}

\begin{cor}
Assume $k$ admits a resolution of singularities. For ${X \in \Smk^{prop}}$, there exists a natural isomorphism
\begin{equation*}
\HAone{0}(X;\Lambda) \cong \HAone{0}(X;\mathbb{Z}) \otimes \Lambda,
\end{equation*}
where $\otimes$ means the tensor product of presheaves over $\mathbb{Z}$.
\end{cor}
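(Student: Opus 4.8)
The plan is to read this off from the structure theorem (Theorem \ref{str H0A1}) together with the elementary fact that the free $\Lambda$-module functor is the base change along $\mathbb{Z} \to \Lambda$ of the free abelian group functor. The point is that both sides of the asserted isomorphism are, for proper $X$, free presheaves on the same birational sheaf $\pi_0^{b\aone}(X)$, and forming free modules commutes with extension of the coefficient ring.

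First I would record, for an arbitrary $\mathscr{F} \in \Shvk$, the natural isomorphism of presheaves of $\Lambda$-modules
\[
\Lambda_{pre}(\mathscr{F}) \cong \mathbb{Z}_{pre}(\mathscr{F}) \otimes \Lambda,
\]
where the right-hand side is the objectwise tensor product $U \mapsto \mathbb{Z}_{pre}(\mathscr{F})(U) \otimes_{\mathbb{Z}} \Lambda$ against the constant presheaf $\Lambda$. This holds because for a set $S$ the free $\Lambda$-module on $S$ is canonically $\mathbb{Z}[S] \otimes_{\mathbb{Z}} \Lambda$, and this identification is functorial in $S$, hence compatible with the restriction maps of $\mathscr{F}$. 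Then for proper $X$ the structure theorem supplies natural isomorphisms $\HAone{0}(X;\Lambda) \cong \Lambda_{pre}(\pi_0^{b\aone}(X))$ and $\HAone{0}(X;\mathbb{Z}) \cong \mathbb{Z}_{pre}(\pi_0^{b\aone}(X))$. Specializing the base-change identity to $\mathscr{F} = \pi_0^{b\aone}(X)$ and composing yields
\[
\HAone{0}(X;\Lambda) \cong \Lambda_{pre}(\pi_0^{b\aone}(X)) \cong \mathbb{Z}_{pre}(\pi_0^{b\aone}(X)) \otimes \Lambda \cong \HAone{0}(X;\mathbb{Z}) \otimes \Lambda,
\]
which is the desired isomorphism. (Alternatively, one can argue sectionwise via Corollary \ref{free R-mod of R-eq}, using $\mathbb{Z}[X(k(U))/R] \otimes_{\mathbb{Z}} \Lambda \cong \Lambda[X(k(U))/R]$; this makes the same computation concrete but is essentially the same proof.)

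Naturality in $X$ is then automatic: a morphism $X \to X'$ in $\Smk^{prop}$ induces a morphism $\pi_0^{b\aone}(X) \to \pi_0^{b\aone}(X')$ of birational sheaves, to which all three displayed isomorphisms are natural, and the endofunctor $(-) \otimes \Lambda$ of presheaves preserves this. The only point requiring any care — and it is a mild one — is to confirm that the tensor product implicit in $\HAone{0}(X;\mathbb{Z}) \otimes \Lambda$ is the objectwise one, so that no sheafification intervenes and it genuinely commutes with the free-presheaf construction; this is unproblematic here since both $\Lambda_{pre}(\pi_0^{b\aone}(X))$ and $\mathbb{Z}_{pre}(\pi_0^{b\aone}(X))$ are already the relevant free presheaves and we compare them at the level of presheaves. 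Thus there is no substantial obstacle: the corollary is a formal consequence of Theorem \ref{str H0A1} and the interaction of free modules with base change.
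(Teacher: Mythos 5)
Your proposal is correct and is essentially the paper's own proof: the paper's argument is exactly the combination of the base-change identity $\Lambda_{pre}(-) \cong \mathbb{Z}_{pre}(-) \otimes \Lambda$ with Theorem \ref{str H0A1} applied to both coefficient rings, which is precisely the chain of isomorphisms you write out. Your additional remarks on objectwise tensor products and naturality are fine elaborations of the same one-line argument.
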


\begin{proof}
This follows from $\Lambda_{pre}(-) \cong \mathbb{Z}_{pre}(-) \otimes \Lambda$ and Theorem \ref{str H0A1}.
\end{proof}

\begin{rem}\label{rmk pairing of A1 homology}
Assume $k$ admits a resolution of singularities. By Theorem \ref{str H0A1}, there exists a pairing
\begin{equation*}
\HAone{0}(X;\Lambda)(Y) \times \HAone{0}(Y;\Lambda)(Z) \to \HAone{0}(X;\Lambda)(Z)
\end{equation*}
defined by the map
\begin{equation*}
\Hom_{S_b^{-1}\Smk}(Y,X) \times \Hom_{S_b^{-1}\Smk}(Z,Y) \to \Hom_{S_b^{-1}\Smk}(Z,X); (f,g) \mapsto f \circ g
\end{equation*}
for all $X,Y,Z \in \Smk^{prop}$. In particular, this pairing gives a ring structure for $\HAone{0}(X;\Lambda)(X)$.
\end{rem}

\subsection{Rational points and finiteness}

In this subsection, we give some examples where $\HAone{0}(X;\Lambda)(k)$ is finitely generated. We first introduce the following notation.

\begin{defi}
We write $b_0^{\aone}(X) = \dim_{\mathbb{Q}} \HAone{0}(X;\mathbb{Q})(k)$ for $X \in \Smk$. This is called \textit{the zeroth $\aone$-Betti number of $X$}.
\end{defi}

By Corollary \ref{free R-mod of R-eq}, $b_0^{\aone}(X)$ coincides with the rank of the free $\Lambda$-module $\HAone{0}(X;\Lambda)(k)$ if $X$ is proper. Our structure theorem of zeroth $\aone$-homology enables to relate the $\aone$-Betti number to $k$-rational points modulo $R$-equivalence.

\begin{thm}\label{A1-betti and rat pt}
Assume $k$ admits a resolution of singularities. For a smooth proper $k$-variety $X$, we have 
\begin{equation*}
b_0^{\aone}(X) = \#(X(k)/R).
\end{equation*}
In particular, $b_0^{\aone}(X) = 0$ if and only if $X(k) = \emptyset$.
\end{thm}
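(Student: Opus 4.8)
The plan is to deduce this statement almost immediately from the structure theorem (Theorem~\ref{str H0A1}) together with Corollary~\ref{free R-mod of R-eq}. First I would apply Corollary~\ref{free R-mod of R-eq} in the special case $U = \Spec k$, which is an object of $\Smk^{var}$ with function field $k(U) = k$. That corollary tells us that $\HAone{0}(X;\Lambda)(\Spec k)$ is the free $\Lambda$-module generated by the set $X(k(U))/R = X(k)/R$. Taking $\Lambda = \mathbb{Q}$, we conclude that $\HAone{0}(X;\mathbb{Q})(\Spec k)$ is the free $\mathbb{Q}$-vector space on the set $X(k)/R$.

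The main computation is then just a dimension count: the dimension over $\mathbb{Q}$ of the free $\mathbb{Q}$-vector space on a set $S$ equals the cardinality $\#S$ of that set. Applying this with $S = X(k)/R$ gives
\begin{equation*}
b_0^{\aone}(X) = \dim_{\mathbb{Q}} \HAone{0}(X;\mathbb{Q})(\Spec k) = \#(X(k)/R),
\end{equation*}
which is the first assertion. I would note that this identity holds even when $X(k)/R$ is infinite, interpreting both sides as (possibly infinite) cardinals, though in the geometrically interesting cases one expects finiteness.

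For the final ``in particular'' clause, I would argue that $b_0^{\aone}(X) = 0$ precisely when the free $\mathbb{Q}$-vector space on $X(k)/R$ is zero, which happens if and only if the generating set $X(k)/R$ is empty. Since $X(k)/R$ is a quotient of $X(k)$, it is empty if and only if $X(k)$ itself is empty. This yields the equivalence $b_0^{\aone}(X) = 0 \iff X(k) = \emptyset$ and simultaneously recovers the integral statement $\HAone{0}(X)(\Spec k) = 0 \iff X(k) = \emptyset$ of Theorem~\ref{Intro. rat. pt.}, since the free abelian group on the empty set is trivial.

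Because everything reduces to the structure theorem, there is essentially no obstacle here beyond correctly invoking the earlier results; the real content was already packaged into Theorem~\ref{str H0A1} and its corollary. The only point requiring a small amount of care is the reduction of the general section $\HAone{0}(X;\Lambda)(U)$ to the fiber over $\Spec k$: one must observe that $\Spec k$ is a legitimate object of $\Smk^{var}$ so that Corollary~\ref{free R-mod of R-eq} applies verbatim, and that the residue field computation gives $k(U) = k$ in this case. With that observation in hand, the theorem follows formally.
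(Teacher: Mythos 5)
Your proposal is correct and is exactly the paper's own argument: the paper proves Theorem~\ref{A1-betti and rat pt} by the one-line deduction from Corollary~\ref{free R-mod of R-eq}, which you have simply spelled out (specializing to $U = \Spec k$, so $k(U) = k$, taking $\Lambda = \mathbb{Q}$, and counting the dimension of a free module on the set $X(k)/R$). The observation that $\Spec k$ is a legitimate object of $\Smk^{var}$ and the handling of the ``in particular'' clause via emptiness of the quotient $X(k)/R$ are both sound, so there is nothing to add.
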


\begin{proof}
This follows from Corollary \ref{free R-mod of R-eq}.
\end{proof}

The following proposition implies the finiteness of $b_0^{\aone}(X)$ for a rationally connected smooth proper real variety $X$.

\begin{prop}
Let $X$ be a rationally connected smooth proper variety over $\mathbb{R}$. Then we have
\begin{equation*}
\HAone{0}(X;\Lambda)(\mathbb{R}) \cong H_0(X(\mathbb{R});\Lambda).
\end{equation*}
In particular, $b_0^{\aone}(X)$ coincides with the ordinary zeroth Betti number of the real manifold $X(\mathbb{R})$.
\end{prop}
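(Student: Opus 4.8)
The plan is to identify $\HAone{0}(X;\Lambda)(\mathbb{R})$ with a set of $R$-equivalence classes using Theorem \ref{A1-betti and rat pt} and Corollary \ref{free R-mod of R-eq}, and then to match these $R$-equivalence classes with connected components of the real manifold $X(\mathbb{R})$. By Corollary \ref{free R-mod of R-eq}, the module $\HAone{0}(X;\Lambda)(\mathbb{R})$ is the free $\Lambda$-module generated by $X(\mathbb{R})/R$, so it suffices to produce a bijection
\begin{equation*}
X(\mathbb{R})/R \cong \pi_0(X(\mathbb{R})),
\end{equation*}
where $\pi_0(X(\mathbb{R}))$ denotes the set of connected components of the real points, and then observe that $H_0(X(\mathbb{R});\Lambda)$ is the free $\Lambda$-module on $\pi_0(X(\mathbb{R}))$.

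First I would fix the topological description: since $X$ is smooth and proper over $\mathbb{R}$, the real locus $X(\mathbb{R})$ is a compact smooth manifold (possibly empty), and its ordinary zeroth homology $H_0(X(\mathbb{R});\Lambda)$ is the free $\Lambda$-module on the finite set $\pi_0(X(\mathbb{R}))$ of its connected components. Second, I would set up the comparison map $X(\mathbb{R})/R \to \pi_0(X(\mathbb{R}))$ sending an $R$-equivalence class to the path-component of the underlying real point. This map is well-defined because any $\mathbb{R}$-morphism $\mathbb{P}^1_{\mathbb{R}} \to X$ witnessing an elementary $R$-equivalence between two points has real locus $\mathbb{P}^1(\mathbb{R}) \cong S^1$, which is connected; the image of this connected space in $X(\mathbb{R})$ is connected and contains both points, so $R$-equivalent real points lie in the same topological component.

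The main obstacle, and the heart of the argument, is surjectivity and injectivity of this comparison map, which is precisely the statement that for a rationally connected $X$ the map from $R$-equivalence classes to real connected components is a bijection. This is a known theorem in real algebraic geometry, due to Kollár (and in related forms to Kollár--Szabó and to Comessatti for rational surfaces): for a smooth proper rationally connected real variety $X$, two real points are $R$-equivalent if and only if they lie in the same connected component of $X(\mathbb{R})$, and every connected component contains a real point in a single $R$-class. I would cite this directly rather than reprove it. Concretely, rational connectedness supplies enough rational curves through pairs of points in the same component to realize topological connectivity by chains of $\mathbb{P}^1$'s, giving surjectivity onto each component and collapsing each component to one $R$-class.

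Combining these, the bijection $X(\mathbb{R})/R \cong \pi_0(X(\mathbb{R}))$ upgrades to an isomorphism of free $\Lambda$-modules
\begin{equation*}
\HAone{0}(X;\Lambda)(\mathbb{R}) \cong \Lambda[X(\mathbb{R})/R] \cong \Lambda[\pi_0(X(\mathbb{R}))] \cong H_0(X(\mathbb{R});\Lambda),
\end{equation*}
where the first isomorphism is Corollary \ref{free R-mod of R-eq} and the last is the standard computation of $H_0$. Specializing to $\Lambda = \mathbb{Q}$ and taking dimensions gives $b_0^{\aone}(X) = \#\pi_0(X(\mathbb{R}))$, which is exactly the ordinary zeroth Betti number of the real manifold $X(\mathbb{R})$, as claimed. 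The only genuine input beyond the bookkeeping already assembled in the paper is the Kollár-type identification of $R$-equivalence with topological connectivity for rationally connected real varieties.
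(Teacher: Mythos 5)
Your proposal is correct and follows essentially the same route as the paper: the paper's proof likewise combines Corollary \ref{free R-mod of R-eq} (taking $U = \Spec\mathbb{R}$) with Koll\'ar's theorem \cite[Cor. 1.7]{Ko} that $X(\mathbb{R})/R$ coincides with the set of connected components of $X(\mathbb{R})$ for a rationally connected smooth proper real variety. The extra details you supply (well-definedness of the comparison map via connectedness of $\mathbb{P}^1(\mathbb{R})$, compactness of $X(\mathbb{R})$) are fine but are already subsumed in the cited result.
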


\begin{proof}
By \cite[Cor. 1.7]{Ko}, the set of $R$-equivalence classes $X(\mathbb{R})/R$ coincides with the connected components of $X(\mathbb{R})$. Thus the assertion follows from Corollary \ref{free R-mod of R-eq}.
\end{proof}

By using Theorem \ref{A1-betti and rat pt}, we also obtain some examples where $b_0^{\aone}(X)$ is finite.

\begin{examp}
\begin{enumerate}
\item Let $X$ be a smooth proper variety with finitely many $k$-rational points. Then $b_0^{\aone}(X)$ is finite. Thus for example, $b_0^{\aone}(C)$ is finite for a smooth projective curve $C$ of genus $\geq 2$ over a number field by Mordell conjecture proved by Faltings \cite{Fa}.
\item Let $X$ be a rationally connected smooth proper variety over either of $\mathbb{C}$ or a $p$-adic field. Then $b_0^{\aone}(X)$ is also finite by \cite[Cor. 1.5]{Ko}.
\end{enumerate}
\end{examp}

\begin{rem}\label{other expressions of b0A1}
For $X$ a smooth $k$-variety, $\HAone{0}(X;\mathbb{Z})(k)$ can be expressed in terms of triangulated categories as follows.
\begin{enumerate}
\item There exist natural isomorphisms
\begin{equation*}
\HAone{0}(X;\mathbb{Z})(k) \cong \Hom_{\Mod(\mathbb{Z})}(\mathbb{Z},\HAone{0}(X;\mathbb{Z})) \cong \Hom_{D_{\aone}(k)}(\mathbb{Z},\mathbb{Z}(X)).
\end{equation*}
Here $D_{\aone}(k)$ is the $\aone$-derived category defined by Morel \cite{Mo1} and $\mathbb{Z}(X)$ is the Nisnevich sheafification of $\mathbb{Z}_{pre}(X)$. This follows from \cite[Lem. 3.3]{As} and its proof.
\item There also exist natural isomorphisms
\begin{equation*}
\HAone{0}(X;\mathbb{Z})(k) \cong \pi_0^{S}(\Sigma_{S^1}^{\infty}(X_+))(k) \cong \Hom_{\mathcal{SH}_{S^1}(k)}(S^0,\Sigma_{S^1}^{\infty}(X_+)).
\end{equation*}
Here $\Sigma_{S^1}^{\infty}(X_+)$ is the infinity $S^1$-suspension of $X_+ = X \sqcup \Spec k$, $\pi_0^{S}(-)$ is the $S^1$-stable zeroth $\aone$-homotopy sheaf, $\mathcal{SH}_{S^1}(k)$ is the $\aone$-homotopy category of $S^1$-spectra, and $S^0$ is the sphere spectrum (see definitions \cite{Mo1}). The first isomorphism is given by \cite[Prop. 2.1]{AH} and the second isomorphism is given by the definition of $\pi_0^{S}(-)$.
\end{enumerate}
\end{rem}

\subsection{Zeroth Suslin homology and zero cycles}

In this subsection, we prove the Suslin homology version of Theorem \ref{str H0A1}. For ${X \in \Smk}$, the Suslin homology sheaf $\mathbf{H}_i^S(X;\Lambda)$ is defined as the $i$-th homology sheaf with transfers of the motive of $X$ with $\Lambda$-coefficients in the sense of Voevodsky \cite{Vo00}. Then the section $\mathbf{H}_i^S(X;\Lambda)(k)$ coincides with the singular homology of Suslin-Voevodsky \cite{SV}, called Suslin homology group. Note that $\mathbf{H}_i^S(X;\Lambda)$ is $\aone$-invariant by the $\aone$-invariance of the motive of $X$. Thus we have a functor
\begin{equation*}
\mathbf{H}_0^S(-;\Lambda) : \mathcal{C}or_{k,\Lambda} \to \mathbf{HI}_k(\Lambda).
\end{equation*}
As a counterpart of Theorem \ref{str H0A1}, we construct an isomorphism of sheaves with transfers $\Lambda\pi_{0,tr}^{b\aone}(X) \cong \mathbf{H}_0^S(X;\Lambda)$ for $X \in \Smk^{prop}$.

\begin{thm}\label{str thm of Suslin}
Assume $k$ perfect. For every $X \in \Smk$, there exists a natural epimorphism of sheaves with transfers
\begin{equation*}
\Lambda\pi_{0,tr}^{b\aone}(X) \twoheadrightarrow \mathbf{H}_0^S(X;\Lambda).
\end{equation*}
Moreover, this is an isomorphism if $X$ is proper.
\end{thm}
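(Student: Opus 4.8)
The plan is to transport the proof of Theorem \ref{str H0A1} through the ``transfer dictionary'': replace the pair $(\pi_0^{b\aone},\Lambda_{pre})$ by $\Lambda\pi_{0,tr}^{b\aone}$, the forgetful functor $\Mod^{\aone}(\Lambda)\to\Shvk$ by the inclusion $\mathbf{HI}_k(\Lambda)\hookrightarrow\mathbf{NST}_k(\Lambda)$, the functor $(-)^{br}$ by the Kahn--Sujatha functor $(-)_{nr}$ of \cite[\S6]{KS3}, and $\mu^M$ by $\nu^M$. First I would assemble three adjunctions
\begin{equation*}
\mathbf{NST}_k(\Lambda)\rightleftarrows\mathbf{NST}_k^{br}(\Lambda)\rightleftarrows\mathbf{HI}_k(\Lambda)\rightleftarrows\mathbf{NST}_k(\Lambda),
\end{equation*}
whose left adjoints are $\Lambda\pi_{0,tr}^{b\aone}$ (Lemma \ref{adj pi b aone tr}), the inclusion $\mathbf{NST}_k^{br}(\Lambda)\hookrightarrow\mathbf{HI}_k(\Lambda)$ (which is left adjoint to $(-)_{nr}$ by Kahn--Sujatha), and the zeroth Suslin homology functor, regarded as a functor $\mathbf{H}_0^S(-;\Lambda):\mathbf{NST}_k(\Lambda)\to\mathbf{HI}_k(\Lambda)$. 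For the third one I need the analogue for transfers of \cite[Lem. 3.3]{As}, namely that $\mathbf{H}_0^S(-;\Lambda)$ is left adjoint to the inclusion $\mathbf{HI}_k(\Lambda)\hookrightarrow\mathbf{NST}_k(\Lambda)$; here the hypothesis that $k$ is perfect enters through \cite[Thm. 13.8]{MVW}, which ensures that objects of $\mathbf{HI}_k(\Lambda)$ are strictly $\aone$-invariant as sheaves.

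Next I would compose the first two adjunctions, so that $\mathbf{NST}_k(\Lambda)\xrightarrow{\Lambda\pi_{0,tr}^{b\aone}}\mathbf{NST}_k^{br}(\Lambda)\hookrightarrow\mathbf{HI}_k(\Lambda)$ becomes a left adjoint whose right adjoint is $N\mapsto N_{nr}$. Applying Yoneda's lemma in $\mathcal{C}or_{k,\Lambda}$ to $\Lambda_{tr}(X)$ then yields, for every $N\in\mathbf{HI}_k(\Lambda)$, natural isomorphisms
\begin{equation*}
\Hom_{\mathbf{HI}_k(\Lambda)}(\Lambda\pi_{0,tr}^{b\aone}(X),N)\cong N_{nr}(X),\qquad\Hom_{\mathbf{HI}_k(\Lambda)}(\mathbf{H}_0^S(X;\Lambda),N)\cong N(X),
\end{equation*}
strictly parallel to \eqref{isom Hom Mbr} and \eqref{isom Hom M}. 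The monomorphism $\nu^N:N_{nr}\hookrightarrow N$ of Proposition \ref{Mnr isom M with tr}\eqref{Prop nu inj} then induces a map between these two groups which is injective in general and bijective when $X$ is proper by Proposition \ref{Mnr isom M with tr}\eqref{Prop nu isom}. Yoneda's lemma in $\mathbf{HI}_k(\Lambda)$ turns this into the desired natural epimorphism $\Lambda\pi_{0,tr}^{b\aone}(X)\twoheadrightarrow\mathbf{H}_0^S(X;\Lambda)$, which is an isomorphism for proper $X$.

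I expect the argument to be essentially formal once the inputs are in place, so the main point to secure is the adjunction $\mathbf{H}_0^S(-;\Lambda)\dashv(\mathbf{HI}_k(\Lambda)\hookrightarrow\mathbf{NST}_k(\Lambda))$ together with the identification of $N\mapsto N_{nr}$ as the right adjoint of the composite left adjoint; after that, Proposition \ref{Mnr isom M with tr} plays exactly the role that Proposition \ref{adj. br functor} played in the proof of Theorem \ref{str H0A1}. A notable simplification compared with Theorem \ref{str H0A1} is that no resolution of singularities is required: both $(-)_{nr}$ and Proposition \ref{Mnr isom M with tr} are available over an arbitrary perfect field, which is exactly why the hypothesis here relaxes to ``$k$ perfect''.
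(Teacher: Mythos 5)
Your proposal matches the paper's own proof essentially step for step: the paper likewise composes the adjunction of Lemma \ref{adj pi b aone tr} with the Kahn--Sujatha adjunction for $(-)_{nr}$ to obtain $\Hom_{\mathbf{HI}_k(\Lambda)}(\Lambda\pi_{0,tr}^{b\aone}(X),M)\cong M_{nr}(X)$, invokes \cite[Lem. 3.3]{As} for $\Hom_{\mathbf{HI}_k(\Lambda)}(\mathbf{H}_0^S(X;\Lambda),M)\cong M(X)$, and then uses $\nu^M$, Proposition \ref{Mnr isom M with tr}, and Yoneda's lemma in $\mathbf{HI}_k(\Lambda)$ exactly as you do. Your closing remarks on where perfectness of $k$ enters and why no resolution of singularities is needed are likewise consistent with the paper.
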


\begin{proof}
By Lemma \ref{adj pi b aone tr}, the composite functor
\begin{equation*}
\mathbf{NST}_k(\Lambda) \xrightarrow{\Lambda\pi_{0,tr}^{b\aone}} \mathbf{NST}_k^{br}(\Lambda) \hookrightarrow \mathbf{HI}_k(\Lambda)
\end{equation*}
is left adjoint to the functor $(-)_{nr} : \mathbf{HI}_k(\Lambda) \to \mathbf{NST}_k(\Lambda)$. For all $X \in \Smk$ and all $M \in \mathbf{HI}_k(\Lambda)$, thus we obtain natural isomorphisms
\begin{equation*}
\Hom_{\mathbf{HI}_k(\Lambda)}(\Lambda\pi_{0,tr}^{b\aone}(X),M) \cong \Hom_{\mathbf{NST}_k(\Lambda)}(\Lambda_{tr}(X),M_{nr}) \cong M_{nr}(X)
\end{equation*}
by Yoneda's lemma in $\mathcal{C}or_{k,\Lambda}$. On the other hand, \cite[Lem. 3.3]{As} gives an isomorphism
\begin{equation*}
\Hom_{\mathbf{HI}_k(\Lambda)}(\mathbf{H}_0^S(X;\Lambda),M) \cong M(X).
\end{equation*}
Thus the map
\begin{equation*}
\Hom_{\mathbf{HI}_k(\Lambda)}(\Lambda\pi_{0,tr}^{b\aone}(X),M_{nr}) \to \Hom_{\mathbf{HI}_k(\Lambda)}(\mathbf{H}_0^S(X;\Lambda),M)
\end{equation*}
obtained by $\nu^M : M_{nr} \to M$ introduces an epimorphism of sheaves with transfers
\begin{equation*}
\Lambda\pi_{0,tr}^{b\aone}(X) \to \mathbf{H}_0^S(X;\Lambda)
\end{equation*}
by Yoneda's lemma in $\mathbf{HI}_k(\Lambda)$. By Proposition \ref{Mnr isom M with tr}, this morphism is an isomorphism if $X$ is proper.
\end{proof}

Assume $k$ perfect. For every $X \in \Smk^{prop}$, there exists a canonical isomorphism
\begin{equation*}
\mathbf{H}_0^S(X;\mathbb{Z})(K) \cong \mathrm{CH}_0(X_{K})
\end{equation*}
for all $K \in \mathcal{F}_k$ (see \cite[Ex. 4.9]{As}). Thus we obtain an application to Chow groups of zero cycles. We simply write 
\begin{equation*}
S_b^{-1}\mathcal{C}or_{k}(X,Y) = \Hom_{S_b^{-1}\mathcal{C}or_{k,\mathbb{Z}}}(X,Y)
\end{equation*}
for $X,Y \in \Smk$.

\begin{cor}\label{cor Sb-1Cor isom CH0}
Assume $k$ perfect. Let $X$ be a smooth proper $k$-variety and $U$ be a smooth $k$-variety. Then we have
\begin{equation*}
S_b^{-1}\mathcal{C}or_{k}(U,X) \cong \mathrm{CH}_0(X_{k(U)}).
\end{equation*}
\end{cor}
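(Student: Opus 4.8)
The plan is to identify $S_b^{-1}\mathcal{C}or_k(U,X)$ with a section of the birational sheaf with transfers $\mathbb{Z}\pi_{0,tr}^{b\aone}(X)$, transport that section across the structure isomorphism of Theorem \ref{str thm of Suslin}, and read off $\mathrm{CH}_0$ after restricting to the function field $k(U)$. First I would invoke representability: by the proof of Lemma \ref{adj pi b aone tr}, under the equivalence of Lemma \ref{eq. br. sh. cor} the sheaf $\mathbb{Z}\pi_{0,tr}^{b\aone}(X)$ corresponds to the presheaf on $S_b^{-1}\mathcal{C}or^{var}_{k,\mathbb{Z}}$ represented by $X$, so that evaluation at the irreducible variety $U$ gives a natural isomorphism
\begin{equation*}
\mathbb{Z}\pi_{0,tr}^{b\aone}(X)(U) \cong \Hom_{S_b^{-1}\mathcal{C}or^{var}_{k,\mathbb{Z}}}(U,X).
\end{equation*}
Because $U$ and $X$ are irreducible and a birational morphism can relate only irreducible schemes, I would then argue that this localized morphism group is unchanged if computed in the full category $\mathcal{C}or_{k,\mathbb{Z}}$, identifying the right-hand side with $S_b^{-1}\mathcal{C}or_k(U,X)$.

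Next I would pass to the generic point. Since $\mathbb{Z}\pi_{0,tr}^{b\aone}(X)$ is a birational sheaf, property \textbf{B2} applied to the dense open subschemes of $U$ yields $\mathbb{Z}\pi_{0,tr}^{b\aone}(X)(U) \cong \mathbb{Z}\pi_{0,tr}^{b\aone}(X)(k(U))$. Finally, as $X$ is proper, Theorem \ref{str thm of Suslin} provides an isomorphism of sheaves with transfers $\mathbb{Z}\pi_{0,tr}^{b\aone}(X) \cong \mathbf{H}_0^S(X;\mathbb{Z})$; evaluating at the field $k(U)$ and using the identification $\mathbf{H}_0^S(X;\mathbb{Z})(k(U)) \cong \mathrm{CH}_0(X_{k(U)})$ recorded in \cite[Ex. 4.9]{As} closes the chain of isomorphisms.

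The two middle steps are routine given the constructions of the preceding sections, so I expect the delicate point to be the comparison, in the first step, between the localization of the full correspondence category and that of its variety subcategory. The task is to verify that adjoining disjoint-union objects to $\mathcal{C}or^{var}_{k,\mathbb{Z}}$ produces no new morphisms between the irreducible schemes $U$ and $X$ once birational morphisms are inverted. I would handle this through the calculus of fractions underlying the Kahn--Sujatha localization: every morphism $U \to X$ is represented by a fraction $U \xleftarrow{s} U' \xrightarrow{c} X$ with $s$ birational and $c$ a correspondence, which forces $U'$ to be irreducible, so that all the data stay within $\mathcal{C}or^{var}_{k,\mathbb{Z}}$ and no disjoint-union object can intervene. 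Keeping careful track of naturality in $U$ throughout is the one place where the argument must be written out rather than merely assembled from the cited results.
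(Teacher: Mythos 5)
Your proposal reproduces the paper's own proof essentially step for step: identify $S_b^{-1}\mathcal{C}or_{k}(U,X)$ with $\mathbb{Z}\pi_{0,tr}^{b\aone}(X)(U)$ via the construction of $\Lambda\pi_{0,tr}^{b\aone}$, apply Theorem \ref{str thm of Suslin} (using that $X$ is proper), pass to the generic point by birationality, and conclude with \cite[Ex. 4.9]{As}. The one point where you go beyond the paper---comparing $\Hom_{S_b^{-1}\mathcal{C}or^{var}_{k,\mathbb{Z}}}(U,X)$ with $\Hom_{S_b^{-1}\mathcal{C}or_{k,\mathbb{Z}}}(U,X)$, which you propose to settle by a calculus-of-fractions argument---is precisely what the paper's phrase ``by the construction of $\Lambda\pi_{0,tr}^{b\aone}(X)$'' leaves implicit, so this is added diligence rather than a divergence in method.
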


\begin{proof}
By Theorem \ref{str thm of Suslin} and the construction of $\Lambda\pi_{0,tr}^{b\aone}(X)$, we have
\begin{equation*}
S_b^{-1}\mathcal{C}or_{k}(U,X) \cong \mathbb{Z}\pi_{0,tr}^{b\aone}(X)(U) \cong \mathbf{H}_0^S(X;\mathbb{Z})(U) \cong \mathbf{H}_0^S(X;\mathbb{Z})(k(U)).
\end{equation*}
On the other hand, we also have $\mathbf{H}_0^S(X;\mathbb{Z})(k(U)) \cong \mathrm{CH}_0(X_{k(U)})$ by \cite[Ex. 4.9]{As}.
\end{proof}

\begin{rem}
Assume $k$ perfect. Like Remark \ref{rmk pairing of A1 homology}, the composition of morphisms in $S_b^{-1}\mathcal{C}or_{k,\Lambda}$ also induces a pairing
\begin{equation*}
\mathbf{H}_0^S(X;\Lambda)(Y) \times \mathbf{H}_0^S(Y;\Lambda)(Z) \to \mathbf{H}_0^S(X;\Lambda)(Z)
\end{equation*}
and a ring structure of $\mathbf{H}_0^S(X;\Lambda)(X)$ for all $X,Y,Z \in \Smk^{pv}$. Thus we obtain a pairing
\begin{equation*}
\mathrm{CH}_0(X_{k(Y)}) \times \mathrm{CH}_0(Y_{k(Z)}) \to \mathrm{CH}_0(X_{k(Z)})
\end{equation*}
and a ring structure of $\mathrm{CH}_0(X_{k(X)})$.
\end{rem}

\section{Universal birational invariance}\label{UBI}

\subsection{Universal birational invariance of $\aone$-homology}\label{subsec univ birat A1}

Our aim of this subsection is to prove the main theorem of this paper, \textit{i.e.}, the universal birational invariance of zeroth $\aone$-homology. For a category $\mathcal{C}$, we denote $\Lambda(\mathcal{C})$ for the category whose objects are same with $\mathcal{C}$ and that $\Hom_{\Lambda(\mathcal{C})}(A,B)$ for each $A,B \in \Lambda(\mathcal{C})$ is the free $\Lambda$-module generated by $\Hom_{\mathcal{C}}(A,B)$. Note that every functor from $\mathcal{C}$ to an $\Lambda$-enriched category uniquely factors through the canonical $\Lambda$-linear functor $\mathcal{C} \to \Lambda(\mathcal{C})$. Thus we have a functor
\begin{equation}\label{zeroth aone homology RSb-1}
\HAone{0}(-;\Lambda) : \Lambda(S_b^{-1}\Smk^{prop}) \to \Mod(\Lambda).
\end{equation}

\begin{prop}\label{A1homology is fully faithful}
Assume $k$ admits a resolution of singularities. Then the functor \eqref{zeroth aone homology RSb-1} is fully faithful.
\end{prop}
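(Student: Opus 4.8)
The plan is to show that the functor $\HAone{0}(-;\Lambda) : \Lambda(S_b^{-1}\Smk^{prop}) \to \Mod(\Lambda)$ induces an isomorphism on $\Hom$-groups, i.e.\ that for all $X, Y \in \Smk^{prop}$ the natural map
\begin{equation*}
\Lambda\left(\Hom_{S_b^{-1}\Smk^{prop}}(X,Y)\right) \to \Hom_{\Mod(\Lambda)}\left(\HAone{0}(X;\Lambda),\HAone{0}(Y;\Lambda)\right)
\end{equation*}
is an isomorphism of $\Lambda$-modules. The key input is the structure theorem (Theorem \ref{str H0A1}), which for proper $X$ gives an isomorphism $\HAone{0}(X;\Lambda) \cong \Lambda_{pre}(\pi_0^{b\aone}(X))$. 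First I would compute the right-hand $\Hom$-module. Using this identification together with the adjunction chain assembled in the proof of Theorem \ref{str H0A1}, the isomorphism \eqref{isom Hom Mbr} reads $\Hom_{\Mod^{\aone}(\Lambda)}(\HAone{0}(X;\Lambda),M) \cong M^{br}(X)$ for every strictly $\aone$-invariant $M$. Applying this with $M = \HAone{0}(Y;\Lambda)$ and invoking Proposition \ref{adj. br functor}\eqref{muM iso}, which says $\mu^M_X : M^{br}(X) \xrightarrow{\cong} M(X)$ for proper $X$, I obtain
\begin{equation*}
\Hom_{\Mod(\Lambda)}(\HAone{0}(X;\Lambda),\HAone{0}(Y;\Lambda)) \cong \HAone{0}(Y;\Lambda)(X).
\end{equation*}

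Next I would identify the section $\HAone{0}(Y;\Lambda)(X)$ explicitly. By Corollary \ref{free R-mod of R-eq}, for proper $Y$ and any $U \in \Smk^{var}$ the section $\HAone{0}(Y;\Lambda)(U)$ is the free $\Lambda$-module on $Y(k(U))/R$; combining this with the bijection $X(k(U))/R \cong \Hom_{S_b^{-1}\Smk^{var}}(U,X)$ from Lemma \ref{Req and pibr} (equivalently \cite[Thm. 6.6.3]{KS2}), the section $\HAone{0}(Y;\Lambda)(X)$ is the free $\Lambda$-module generated by $\Hom_{S_b^{-1}\Smk^{var}}(X,Y)$. Under the equivalence \eqref{eq. cat Smpv Smvar} identifying $S_b^{-1}\Smk^{pv}$ with $S_b^{-1}\Smk^{var}$, and since the morphisms in $S_b^{-1}\Smk^{prop}$ between proper objects agree with those in $S_b^{-1}\Smk^{var}$, this is exactly $\Lambda\left(\Hom_{S_b^{-1}\Smk^{prop}}(X,Y)\right)$, which is by definition the source $\Hom$-module.

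The final step is to verify that these two identifications \emph{compose to the map induced by the functor $\HAone{0}$ itself}, not merely that source and target are abstractly isomorphic $\Lambda$-modules. I would trace a morphism $X \to Y$ in $S_b^{-1}\Smk^{prop}$ (given by a chain of birational morphisms of proper varieties) through the adjunction isomorphisms, checking via Yoneda's lemma that the induced map $\HAone{0}(X;\Lambda) \to \HAone{0}(Y;\Lambda)$ corresponds under the composite isomorphism above to the generator of $\Lambda(\Hom_{S_b^{-1}\Smk^{prop}}(X,Y))$ attached to that morphism. The naturality in $X$ of the isomorphism \eqref{isom Hom Mbr} and of $\mu^M$, recorded in Proposition \ref{adj. br functor}, is what makes this bookkeeping go through.

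The main obstacle is this last compatibility check: it is routine to exhibit the abstract isomorphism of $\Hom$-modules from the adjunctions, but one must confirm that the isomorphism is implemented by the functor $\HAone{0}$ and is compatible with composition, so that fully faithfulness (as opposed to a pointwise bijection) genuinely follows. I expect this to reduce, after unwinding the Yoneda identifications, to the fact that all the isomorphisms in Theorem \ref{str H0A1}, Corollary \ref{free R-mod of R-eq}, and Proposition \ref{adj. br functor} are natural in the relevant variable; once naturality is in hand the composition-compatibility is automatic, since both the source and target assign to $X,Y$ free modules functorially in the birational localization.
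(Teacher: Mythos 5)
Your overall strategy --- compute both Hom-modules explicitly and match them --- is genuinely different from the paper's, and it could be pushed through, but as written it has a real gap at the step where you identify the source of the map. You assert that ``the morphisms in $S_b^{-1}\Smk^{prop}$ between proper objects agree with those in $S_b^{-1}\Smk^{var}$.'' This is not a formality: localizing the \emph{larger} category $\Smk^{prop}$ can a priori both create and identify morphisms between objects of the subcategory, so the full faithfulness of $S_b^{-1}\Smk^{pv} \to S_b^{-1}\Smk^{prop}$ is a substantive theorem --- it is exactly \cite[Thm. 6.4]{KS1}, which together with the equivalence \eqref{eq. cat Smpv Smvar} is what the paper invokes in the very first sentence of its proof. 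Without this input, your computation of $\Hom_{\Mod(\Lambda)}(\HAone{0}(X;\Lambda),\HAone{0}(Y;\Lambda))$ as the free $\Lambda$-module on $\Hom_{S_b^{-1}\Smk^{var}}(X,Y)$ never connects to the actual source, the free $\Lambda$-module on $\Hom_{S_b^{-1}\Smk^{prop}}(X,Y)$. Relatedly, your identifications (Corollary \ref{free R-mod of R-eq} applied with $U=X$, and \cite[Thm. 6.6.3]{KS2}) only make sense for $X$ and $Y$ irreducible, whereas the proposition concerns all smooth proper $k$-schemes; the reduction to $\Smk^{pv}$ (decomposition into components plus additivity of $\HAone{0}$ and of $\Hom$) is precisely what the citation of \cite[Thm. 6.4]{KS1} is doing for the paper, and it must be stated, not treated as self-evident.

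The compatibility step you flag as the main obstacle is, by contrast, sound in outline but not executed, and you misplace the difficulty slightly. Your claim that binaturality suffices is correct: if $\theta_{X,Y}$ is a Hom-isomorphism natural in both variables with respect to the functor, then $u_X = \theta_{X,X}(\mathrm{id}_X)$ is forced to be invertible and $\theta_{X,Y}(h) = u_Y \circ \HAone{0}(h)$, so full faithfulness follows. The real issue is that the naturality recorded in \eqref{isom Hom Mbr} and Proposition \ref{adj. br functor} is naturality for honest scheme morphisms, while your $\theta$ must be natural for morphisms of $\Lambda(S_b^{-1}\Smk^{prop})$, i.e.\ $\Lambda$-combinations of zig-zags with birational legs inverted; you need the (easy, but missing) remark that a natural transformation between functors inverting birational morphisms descends uniquely to the localization. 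The paper's proof finesses all of this bookkeeping: it shows $\HAone{0}(-;\Lambda) \cong \Lambda_{pre}(\pi_0^{b\aone}(-))$ naturally on $\Smk^{pv}$ (Theorem \ref{str H0A1}), observes that under Lemma \ref{eq. br. sh.} and \eqref{eq. cat Smpv Smvar} the right-hand side is the Yoneda embedding of $\Lambda(S_b^{-1}\Smk^{pv})$ into $\Mod^{br}(\Lambda) \subseteq \Mod(\Lambda)$, hence fully faithful, and then uses the uniqueness clause in the universal property of the localization to identify the induced functor on $\Lambda(S_b^{-1}\Smk^{pv})$ with this embedding up to natural equivalence. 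Reorganizing your argument around that device would make your ``main obstacle'' and the zig-zag naturality question disappear simultaneously, leaving only the Kahn--Sujatha input above as the essential external ingredient.
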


\begin{proof}
By \cite[Thm. 6.4]{KS1}, we only need to show that the functor
\begin{equation}\label{univ. birat. of H0A1 equiv}
\HAone{0}(-;\Lambda) : \Lambda(S_b^{-1}\Smk^{pv}) \to \Mod(\Lambda)
\end{equation}
is fully faithful. The canonical functor $S_b^{-1}\Smk^{pv} \to \Lambda(S_b^{-1}\Smk^{pv})$ induces an equivalence of categories
\begin{equation*}
\mathcal{P}resh(S_b^{-1}\Smk^{pv},\Lambda) \xrightarrow{\cong} \mathcal{L}in(\Lambda(S_b^{-1}\Smk^{pv}),\Lambda).
\end{equation*}
On the other hand, we obtain equivalences
\begin{equation*}
\Mod^{br}(\Lambda) \xrightarrow{\cong} \mathcal{P}resh(S_b^{-1}\Smk^{var},\Lambda) \xrightarrow{\cong} \mathcal{P}resh(S_b^{-1}\Smk^{pv},\Lambda)
\end{equation*}
by \eqref{br modules preshe var}. Thus we have a functor
\begin{equation*}
\mathscr{H} : \Lambda(S_b^{-1}\Smk^{pv}) \hookrightarrow \mathcal{L}in(\Lambda(S_b^{-1}\Smk^{pv}),\Lambda) \cong \Mod^{br}(\Lambda) \hookrightarrow \Mod(\Lambda),
\end{equation*}
where the first functor is the Yoneda embedding. Therefore, $\mathscr{H}$ is fully faithful by Yoneda's lemma in $\Lambda(S_b^{-1}\Smk^{pv})$. By Theorem \ref{str H0A1}, there exists a natural isomorphism
\begin{equation*}
\mathscr{H}(X) \cong \Hom_{\Lambda(S_b^{-1}\Smk^{pv})}(-,X) \cong \Lambda_{pre}(\pi_0^{b\aone}(X)) \cong \HAone{0}(X;\Lambda)
\end{equation*}
for all $X \in \Smk^{pv}$. Thus the diagram
\begin{equation*}
\xymatrix{
\Smk^{pv} \ar[r]^-{\HAone{0}(-;\Lambda)} \ar[d] &\Mod(\Lambda) \ar@{=}[d]\\
\Lambda(S_b^{-1}\Smk^{pv}) \ar[r]^-{\mathscr{H}} &\Mod(\Lambda)
}
\end{equation*}
is $2$-commutative. This diagram shows that the functor \eqref{univ. birat. of H0A1 equiv} is naturally equivalent to $\mathscr{H}$. Since $\mathscr{H}$ is fully faithful, so is \eqref{univ. birat. of H0A1 equiv}.
\end{proof}

Proposition \ref{A1homology is fully faithful} shows that the zeroth $\aone$-homology functor on $S_b^{-1}\Smk^{prop}$ is conservative and faithful.

\begin{cor}\label{cons. and faithful}
Assume $k$ admits a resolution of singularities. Then the functor
\begin{equation}\label{H0A1 on brSmkpv}
\HAone{0}(-;\Lambda) : S_b^{-1}\Smk^{prop} \to \Mod(\Lambda).
\end{equation}
is conservative and faithful.
\end{cor}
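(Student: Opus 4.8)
The plan is to deduce Corollary \ref{cons. and faithful} directly from the fully faithfulness established in Proposition \ref{A1homology is fully faithful}. The key observation is that the functor \eqref{H0A1 on brSmkpv} factors through the $\Lambda$-linearization: there is a canonical $\Lambda$-linear functor $S_b^{-1}\Smk^{prop} \to \Lambda(S_b^{-1}\Smk^{prop})$, and the $\aone$-homology functor on $S_b^{-1}\Smk^{prop}$ is the composite of this canonical functor with the fully faithful functor \eqref{zeroth aone homology RSb-1}. Thus all the hard work has already been done, and what remains is to translate fully faithfulness of the linearized functor into conservativity and faithfulness of \eqref{H0A1 on brSmkpv}.

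First I would verify faithfulness. Let $f, g : X \to Y$ be two morphisms in $S_b^{-1}\Smk^{prop}$ that induce the same map $\HAone{0}(f;\Lambda) = \HAone{0}(g;\Lambda)$. Under the canonical functor $S_b^{-1}\Smk^{prop} \to \Lambda(S_b^{-1}\Smk^{prop})$, the morphisms $f$ and $g$ map to the corresponding generators of the free $\Lambda$-module $\Hom_{\Lambda(S_b^{-1}\Smk^{prop})}(X,Y)$. Since the functor \eqref{zeroth aone homology RSb-1} is fully faithful, it is in particular injective on Hom-sets, so equality of images forces $f$ and $g$ to coincide as elements of the free $\Lambda$-module; but distinct morphisms of $S_b^{-1}\Smk^{prop}$ give distinct free generators, hence $f = g$. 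This gives faithfulness.

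Next I would establish conservativity. Suppose $f : X \to Y$ is a morphism in $S_b^{-1}\Smk^{prop}$ such that $\HAone{0}(f;\Lambda)$ is an isomorphism. Viewing $f$ as a morphism in $\Lambda(S_b^{-1}\Smk^{prop})$ and using that \eqref{zeroth aone homology RSb-1} is fully faithful, there is an inverse morphism in $\Lambda(S_b^{-1}\Smk^{prop})$, i.e. an element $h \in \Hom_{\Lambda(S_b^{-1}\Smk^{prop})}(Y,X)$ with $h \circ f = \mathrm{id}_X$ and $f \circ h = \mathrm{id}_Y$ in the $\Lambda$-linearized category. The delicate point is that $h$ is a priori only a $\Lambda$-linear combination of honest morphisms of $S_b^{-1}\Smk^{prop}$, so it need not itself lie in $S_b^{-1}\Smk^{prop}$; conservativity of \eqref{H0A1 on brSmkpv} requires an actual two-sided inverse in $S_b^{-1}\Smk^{prop}$ rather than merely in its linearization. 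This is the main obstacle.

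To overcome it I would exploit the explicit description coming from Theorem \ref{str H0A1} and Corollary \ref{free R-mod of R-eq}. Since $\HAone{0}(X;\Lambda) \cong \Lambda_{pre}(\pi_0^{b\aone}(X))$ is a free presheaf on the birational $\aone$-connected components, an isomorphism of such free sheaves is induced by a bijection on generators, namely on the sets $\pi_0^{br}(X)(U) \cong \Hom_{S_b^{-1}\Smk^{var}}(U,X)$ of Lemma \ref{Req and pibr}. By the Yoneda identifications of Lemma \ref{adj pi b aone} and the equivalence of Lemma \ref{eq. br. sh.}, an isomorphism $\HAone{0}(f;\Lambda)$ that respects the free-generating sets is induced by an isomorphism $f$ already in $S_b^{-1}\Smk^{var}$, which under the equivalence \eqref{eq. cat Smpv Smvar} and \cite[Thm. 6.4]{KS1} identifies with an isomorphism in $S_b^{-1}\Smk^{prop}$. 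Concretely, I would argue that a $\Lambda$-linear isomorphism of free $\Lambda$-modules sending basis elements to basis elements must permute the bases, so the inverse $h$ is forced to be a genuine morphism rather than a nontrivial linear combination; tracking this through the Yoneda isomorphism of Proposition \ref{A1homology is fully faithful} yields an honest inverse to $f$ in $S_b^{-1}\Smk^{prop}$, completing the proof of conservativity.
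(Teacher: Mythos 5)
Your proposal is correct, and its faithfulness half is exactly the paper's argument: factor \eqref{H0A1 on brSmkpv} through the linearization $S_b^{-1}\Smk^{prop} \to \Lambda(S_b^{-1}\Smk^{prop})$ and apply Proposition \ref{A1homology is fully faithful}. For conservativity, however, you take a genuinely different route. The paper handles it in one line by using the general categorical fact that the linearization functor $\mathcal{C} \to \Lambda(\mathcal{C})$ is itself conservative whenever $\Lambda \neq 0$: if $h = \sum_i \lambda_i g_i$ is a two-sided inverse of $f$ in $\Lambda(\mathcal{C})$, then comparing the coefficient of $\mathrm{id}_X$ in $h \circ f = \mathrm{id}_X$ and of $\mathrm{id}_Y$ in $f \circ h = \mathrm{id}_Y$ produces morphisms $g_i, g_j$ of $\mathcal{C}$ with $g_i \circ f = \mathrm{id}_X$ and $f \circ g_j = \mathrm{id}_Y$, so $f$ is already invertible in $\mathcal{C}$; composing with the fully faithful (hence conservative and faithful) functor \eqref{zeroth aone homology RSb-1} then gives the corollary. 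You instead treat the possibility that the inverse is a nontrivial linear combination as the main obstacle and resolve it geometrically via Theorem \ref{str H0A1}: since $\HAone{0}(f;\Lambda)$ is identified with $\Lambda_{pre}(\pi_0^{b\aone}(f))$, it carries canonical generators to canonical generators; an isomorphism of free $\Lambda$-modules with this property restricts to a bijection of bases, so $\pi_0^{b\aone}(f)$ is an isomorphism of birational sheaves, and Yoneda in $S_b^{-1}\Smk^{var}$, together with \eqref{eq. cat Smpv Smvar} and \cite[Thm. 6.4]{KS1}, produces an honest inverse of $f$. Both arguments ultimately rest on the same coefficient-counting observation (and both implicitly need $\Lambda \neq 0$); the paper's version is shorter, purely formal, and independent of Theorem \ref{str H0A1}, while yours stays inside the geometric machinery of the paper and exhibits the inverse explicitly as a morphism of birational sheaves. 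Two points to tighten if you write yours up: the identification $\pi_0^{br}(X)(U) \cong \Hom_{S_b^{-1}\Smk^{var}}(U,X)$ is the displayed formula following the definition of $\pi_0^{br}$, not Lemma \ref{Req and pibr}; and you should observe that the natural isomorphism of Theorem \ref{str H0A1} descends to the localized category (both functors invert birational morphisms of smooth proper varieties), so that generator-preservation holds for every morphism of $S_b^{-1}\Smk^{prop}$, not only for those coming from $\Smk^{prop}$.
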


\begin{proof}
Since the functors $S_b^{-1}\Smk^{prop} \to \Lambda(S_b^{-1}\Smk^{prop})$ and \eqref{zeroth aone homology RSb-1} are conservative and faithful by Proposition \ref{A1homology is fully faithful}, so is \eqref{H0A1 on brSmkpv}.
\end{proof}

For describing universal birational invariance, we introduce the following term.

\begin{defi}\label{def. br. ff}
Let $\mathcal{C}$ be a full subcategory of $\Smk^{prop}$. Then $\mathcal{C}$ is called \textit{a birationally fully faithful subcategory of $\Smk^{prop}$}, if the functor ${S_b^{-1}\mathcal{C} \to S_b^{-1}\Smk^{prop}}$ is fully faithful.
\end{defi}

We see some basic examples of birationally fully faithful subcategories of $\Smk^{prop}$.

\begin{examp}\label{ex. br. ff}
Clearly, $\Smk^{prop}$ is a birationally fully faithful subcategory of itself. Let $\Smk^{pjv}$ (resp. $\Smk^{proj}$) be the full subcategory of $\Smk^{prop}$ spanned by smooth projective $k$-varieties (resp. $k$-schemes). When $k$ admits a resolution of singularities, $\Smk^{proj}$ is a birationally fully faithful subcategory of $\Smk^{prop}$ by the equivalence $\Smk^{proj} \cong \Smk^{prop}$ in \cite[Thm. 8.8]{KS1}. Moreover, $\Smk^{pv}$ and $\Smk^{pjv}$ are also birationally fully faithful subcategories of $\Smk^{prop}$ by \cite[Thm. 6.4]{KS1}.
\end{examp}

We prove the universal birational invariance of the $\aone$-homology functor on birationally fully faithful subcategories of $\Smk^{prop}$. For a subcategory $\mathcal{C} \subseteq \Smk^{prop}$, we denote $\mathbf{Im}^{\mathcal{C}}_\Lambda\HAone{0}$ for the full subcategory of $\Mod(\Lambda)$ spanned by sheaves isomorphic to $\HAone{0}(X;\Lambda)$ for some $X \in \mathcal{C}$.

\begin{thm}\label{univ. birat. of H0A1}
Assume $k$ admits a resolution of singularities. Let $\mathcal{C}$ be a birationally full subcategory of $\Smk^{prop}$ and $\mathcal{A}$ be an arbitrary category enriched by $\Lambda$-modules.
\begin{enumerate}
\item\label{univ. birat. covariant} Let $F : \mathcal{C} \to \mathcal{A}$ be an arbitrary functor which sends each birational morphism to an isomorphism. Then there exists one and only one (up to a natural equivalence) $\Lambda$-linear functor $F_{S_b} : \mathbf{Im}^{\mathcal{C}}_\Lambda\HAone{0} \to \mathcal{A}$ such that the diagram
\begin{equation*}
\xymatrix{
\mathcal{C} \ar[r]^-{F} \ar[d] &\mathcal{A} \\
\mathbf{Im}^{\mathcal{C}}_\Lambda\HAone{0} \ar@{.>}[ru]_-{F_{S_b}}
}
\end{equation*}
is $2$-commutative.
\item\label{univ. birat. contravariant} Let $F' : \mathcal{C}^{op} \to \mathcal{A}$ be an arbitrary functor which sends each birational morphism to an isomorphism. Then there exists one and only one (up to a natural equivalence) $\Lambda$-linear functor $F'_{S_b} : (\mathbf{Im}^{\mathcal{C}}_\Lambda\HAone{0})^{op} \to \mathcal{A}$ such that the diagram
\begin{equation*}
\xymatrix{
\mathcal{C}^{op} \ar[r]^-{F'} \ar[d] &\mathcal{A} \\
(\mathbf{Im}^{\mathcal{C}}_\Lambda\HAone{0})^{op} \ar@{.>}[ru]_-{F'_{S_b}}
}
\end{equation*}
is $2$-commutative.
\end{enumerate}
\end{thm}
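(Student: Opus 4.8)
The plan is to deduce the theorem from Proposition \ref{A1homology is fully faithful} by chaining together two universal properties: that of the localization $S_b^{-1}(-)$ and that of the $\Lambda$-linearization $\Lambda(-)$. Throughout I write $L : \mathcal{C} \to S_b^{-1}\mathcal{C}$ for the localization functor and $\iota : S_b^{-1}\mathcal{C} \to \Lambda(S_b^{-1}\mathcal{C})$ for the canonical $\Lambda$-linear functor.

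For part \eqref{univ. birat. covariant}, I would first factor $F$ through these two functors. Since $F$ sends birational morphisms to isomorphisms, the universal property of localization gives a unique factorization $F \cong \overline{F} \circ L$ with $\overline{F} : S_b^{-1}\mathcal{C} \to \mathcal{A}$. Because $\mathcal{A}$ is enriched by $\Lambda$-modules, the universal property of $\Lambda(-)$ then yields a unique $\Lambda$-linear functor $\widetilde{F} : \Lambda(S_b^{-1}\mathcal{C}) \to \mathcal{A}$ with $\overline{F} \cong \widetilde{F} \circ \iota$. Thus $F$ factors, uniquely up to natural equivalence, as $F \cong \widetilde{F} \circ \iota \circ L$ through the $\Lambda$-linear category $\Lambda(S_b^{-1}\mathcal{C})$.

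The key step is then to show that $\HAone{0}(-;\Lambda)$ restricts to an equivalence
\[
\Lambda(S_b^{-1}\mathcal{C}) \xrightarrow{\cong} \mathbf{Im}^{\mathcal{C}}_\Lambda\HAone{0}.
\]
This functor is essentially surjective by the very definition of $\mathbf{Im}^{\mathcal{C}}_\Lambda\HAone{0}$. For full faithfulness, I would factor it as
\[
\Lambda(S_b^{-1}\mathcal{C}) \to \Lambda(S_b^{-1}\Smk^{prop}) \xrightarrow{\HAone{0}(-;\Lambda)} \Mod(\Lambda).
\]
Since $\mathcal{C}$ is birationally fully faithful, the functor $S_b^{-1}\mathcal{C} \to S_b^{-1}\Smk^{prop}$ is fully faithful, and applying $\Lambda(-)$ preserves this (a bijection of $\Hom$-sets induces an isomorphism of the free $\Lambda$-modules they generate), so the first arrow is fully faithful; the second is fully faithful by Proposition \ref{A1homology is fully faithful}. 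Hence the composite is fully faithful, and being essentially surjective onto $\mathbf{Im}^{\mathcal{C}}_\Lambda\HAone{0}$ it is an equivalence. Choosing a quasi-inverse $G$, I set $F_{S_b} = \widetilde{F} \circ G$, which is $\Lambda$-linear. Unwinding the identifications — and using that the original functor $\HAone{0}|_{\mathcal{C}}$ agrees, up to natural equivalence, with the composite $\mathcal{C} \xrightarrow{\iota \circ L} \Lambda(S_b^{-1}\mathcal{C}) \xrightarrow{\HAone{0}} \mathbf{Im}^{\mathcal{C}}_\Lambda\HAone{0}$, exactly as in the proof of Proposition \ref{A1homology is fully faithful} — gives $F_{S_b} \circ \HAone{0} \cong \widetilde{F} \circ \iota \circ L \cong F$, which is the desired $2$-commutativity. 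Uniqueness follows because any $\Lambda$-linear $F_{S_b}'$ with $F_{S_b}' \circ \HAone{0} \cong F$ satisfies $F_{S_b}' \circ \HAone{0} \circ \iota \circ L \cong F \cong \widetilde{F} \circ \iota \circ L$, so the two universal properties force $F_{S_b}' \circ \HAone{0}_{\Lambda(S_b^{-1}\mathcal{C})} \cong \widetilde{F}$, and postcomposing with $G$ gives $F_{S_b}' \cong F_{S_b}$.

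Part \eqref{univ. birat. contravariant} follows by the identical argument after passing to opposite categories throughout, using $\Lambda((S_b^{-1}\mathcal{C})^{op}) = \Lambda(S_b^{-1}\mathcal{C})^{op}$ and the dual equivalence $(\Lambda(S_b^{-1}\mathcal{C}))^{op} \cong (\mathbf{Im}^{\mathcal{C}}_\Lambda\HAone{0})^{op}$. I do not expect a serious obstacle here, since Proposition \ref{A1homology is fully faithful} already carries the essential geometric content; the only points requiring care are verifying that $\Lambda(-)$ preserves full faithfulness and that the successive universal factorizations compose to give a genuine natural equivalence $F \cong F_{S_b} \circ \HAone{0}$ rather than mere objectwise agreement.
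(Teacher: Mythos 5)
Your proposal is correct and follows essentially the same route as the paper's own proof: both establish the equivalence $\Lambda(S_b^{-1}\mathcal{C}) \cong \mathbf{Im}^{\mathcal{C}}_\Lambda\HAone{0}$ by combining the birationally fully faithful hypothesis with Proposition \ref{A1homology is fully faithful}, and then invoke the universal property of $\mathcal{C} \to \Lambda(S_b^{-1}\mathcal{C})$ (which you unpack into the localization and $\Lambda$-linearization steps) and dualize for the contravariant case. Your version is simply more explicit about the factorization of $F$, the choice of quasi-inverse, and the uniqueness argument, all of which the paper leaves implicit.
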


\begin{proof}
Since $S_b^{-1}\mathcal{C} \to S_b^{-1}\Smk^{prop}$ is fully faithful, so is the composition
\begin{equation}\label{unv birat eq 1}
\Lambda(S_b^{-1}\mathcal{C}) \to \Lambda(S_b^{-1}\Smk^{prop}) \to \Mod(\Lambda)
\end{equation}
by Proposition \ref{A1homology is fully faithful}. On the other hand, the essential image of the functor \eqref{unv birat eq 1} coincides with $\mathbf{Im}^{\mathcal{C}}_\Lambda\HAone{0}$. Thus we have an equivalence $\Lambda(S_b^{-1}\mathcal{C}) \cong \mathbf{Im}^{\mathcal{C}}_\Lambda\HAone{0}$ such that the diagram
\begin{equation}\label{diagram univ. covariant}
\begin{CD}
\mathcal{C} @= \mathcal{C} \\
@VVV @VV{\HAone{0}(-;\Lambda)}V \\
\Lambda(S_b^{-1}\mathcal{C}) @>\cong>> \mathbf{Im}^{\mathcal{C}}_\Lambda\HAone{0}
\end{CD}
\end{equation}
is $2$-commutative. Thus \eqref{univ. birat. covariant} follows from the universality on $\mathcal{C} \to \Lambda(S_b^{-1}\mathcal{C})$. Similarly, we obtain \eqref{univ. birat. contravariant} by taking the opposite categories in \eqref{diagram univ. covariant}.
\end{proof}

\begin{rem}\label{rmk H0A1 ext Modbr A}
The functor $F_{S_b}$ (resp. $F'_{S_b}$) in Theorem \ref{univ. birat. of H0A1} is canonically extended to ${\Mod^{br}(\Lambda) \to \mathcal{A}}$ (resp. ${(\Mod^{br}(\Lambda))^{op} \to \mathcal{A}}$), if $\Smk^{pjv} \subseteq \mathcal{C}$ and $\mathcal{A}$ is cocomplete. Indeed, the equivalences \eqref{br modules preshe var} and $\Smk^{pjv} \cong \Smk^{var}$ in \cite[Prop. 8.5]{KS1} induce
\begin{equation*}
\Mod^{br}(\Lambda) \cong \mathcal{L}in(\Lambda(S_b^{-1}\Smk^{var}),\Lambda) \cong \mathcal{L}in(\Lambda(S_b^{-1}\Smk^{pjv}),\Lambda).
\end{equation*}
Under the identification by this equivalence, the category $\mathbf{Im}^{\mathcal{C}}_\Lambda\HAone{0}$ contains the full subcategory of $\Mod^{br}(\Lambda)$ consisting of representable presheaves on $\Lambda(S_b^{-1}\Smk^{pjv})$ by Theorem \ref{str H0A1}. Thus the purpose extension is obtained as a left Kan extension of $\Lambda(S_b^{-1}\Smk^{pjv}) \to \mathcal{A}$ (resp. $(\Lambda(S_b^{-1}\Smk^{pjv}))^{op} \to \mathcal{A}$).
\end{rem}

\subsection{Universal birational invariance of Suslin homology}

In this subsection we prove the Suslin homology version of the universal birational invariance property. We have a functor
\begin{equation}\label{Suslin homology from Sb-1Cor}
\mathbf{H}_0^S(-;\Lambda) : S_b^{-1}\mathcal{C}or_{k,\Lambda}^{prop} \to \mathbf{NST}_k(\Lambda).
\end{equation}

\begin{prop}\label{prop Suslin fully faithful}
Assume $k$ perfect and admits a resolution of singularities. Then the functor \eqref{Suslin homology from Sb-1Cor} is fully faithful.
\end{prop}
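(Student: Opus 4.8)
The proof of Proposition \ref{prop Suslin fully faithful} should parallel the proof of Proposition \ref{A1homology is fully faithful} for ordinary $\aone$-homology, replacing the category of smooth $k$-schemes by the category of finite correspondences throughout. The plan is to reduce from $\Smk^{prop}$ to the subcategory of proper and irreducible schemes, then identify the Suslin homology functor with a Yoneda embedding into birational sheaves with transfers.

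First I would reduce the claim for $\mathcal{C}or_{k,\Lambda}^{prop}$ to the corresponding claim for $\mathcal{C}or_{k,\Lambda}^{pv}$, using the equivalence $S_b^{-1}\mathcal{C}or_{k,\Lambda}^{pv} \xrightarrow{\cong} S_b^{-1}\mathcal{C}or^{var}_{k,\Lambda}$ of Proposition \ref{corpv eq corvar} together with the remark giving $S_b^{-1}\mathcal{C}or_{k,\Lambda} \cong S_b^{-1}\mathcal{C}or^{prop}_{k,\Lambda}$; this is the transfer-analogue of the step where the proof of Proposition \ref{A1homology is fully faithful} invokes \cite[Thm. 6.4]{KS1}. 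So it suffices to prove that the restricted functor
\begin{equation*}
\mathbf{H}_0^S(-;\Lambda) : S_b^{-1}\mathcal{C}or_{k,\Lambda}^{pv} \to \mathbf{NST}_k(\Lambda)
\end{equation*}
is fully faithful. Next I would use Lemma \ref{eq. br. sh. cor} to identify $\mathbf{NST}_k^{br}(\Lambda)$ with $\mathcal{L}in(S_b^{-1}\mathcal{C}or^{var}_{k,\Lambda},\Lambda)$, and under the equivalence $S_b^{-1}\mathcal{C}or_{k,\Lambda}^{pv} \cong S_b^{-1}\mathcal{C}or^{var}_{k,\Lambda}$ regard this as a category of $\Lambda$-linear presheaves on $S_b^{-1}\mathcal{C}or_{k,\Lambda}^{pv}$. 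The Yoneda embedding of $S_b^{-1}\mathcal{C}or_{k,\Lambda}^{pv}$ into this presheaf category is fully faithful by Yoneda's lemma, and composing with the forgetful inclusion $\mathbf{NST}_k^{br}(\Lambda) \hookrightarrow \mathbf{NST}_k(\Lambda)$ gives a fully faithful functor, call it $\mathscr{H}^{tr}$.

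The crux is then to identify $\mathscr{H}^{tr}$ with the Suslin homology functor on proper irreducible schemes. For $X \in \mathcal{C}or_{k,\Lambda}^{pv}$, the representable birational sheaf with transfers $\mathscr{H}^{tr}(X) = S_b^{-1}\mathcal{C}or_{k,\Lambda}^{pv}(-,X)$ is exactly $\Lambda\pi_{0,tr}^{b\aone}(X)$ by the construction of that functor, and by Theorem \ref{str thm of Suslin} there is a natural isomorphism $\Lambda\pi_{0,tr}^{b\aone}(X) \cong \mathbf{H}_0^S(X;\Lambda)$ since $X$ is proper. This yields a $2$-commutative diagram exhibiting $\mathbf{H}_0^S(-;\Lambda)$ on $S_b^{-1}\mathcal{C}or_{k,\Lambda}^{pv}$ as naturally equivalent to $\mathscr{H}^{tr}$, and since the latter is fully faithful, so is the former.

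The main obstacle I anticipate is not any single hard estimate but ensuring that the chain of equivalences is set up compatibly: one must check that the identification of $\Lambda\pi_{0,tr}^{b\aone}(X)$ with the representable presheaf (from its defining left Kan extension) matches the Yoneda embedding used to get full faithfulness, and that the isomorphism of Theorem \ref{str thm of Suslin} is natural in $X$ over all of $S_b^{-1}\mathcal{C}or_{k,\Lambda}^{pv}$, i.e.\ that it is compatible with morphisms and not merely objectwise. Once the diagrammatic bookkeeping is in place, each individual step is formal, drawing only on the adjunction of Lemma \ref{adj pi b aone tr}, the equivalence of Lemma \ref{eq. br. sh. cor}, and the structure theorem of Theorem \ref{str thm of Suslin}.
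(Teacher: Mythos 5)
Your core argument is exactly the paper's proof: reduce to $\mathcal{C}or_{k,\Lambda}^{pv}$, use Lemma \ref{eq. br. sh. cor} and Proposition \ref{corpv eq corvar} to identify $\mathbf{NST}_k^{br}(\Lambda)$ with $\mathcal{L}in(S_b^{-1}\mathcal{C}or_{k,\Lambda}^{pv},\Lambda)$, compose the Yoneda embedding with the inclusion $\mathbf{NST}_k^{br}(\Lambda) \hookrightarrow \mathbf{NST}_k(\Lambda)$ to get a fully faithful functor $\mathscr{H}_{tr}$, and identify $\mathscr{H}_{tr}$ with $\mathbf{H}_0^S(-;\Lambda)$ objectwise via Theorem \ref{str thm of Suslin}. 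The compatibility issue you flag at the end is indeed the point to check, but it is already supplied by the paper: the epimorphism of Theorem \ref{str thm of Suslin} is natural in $X$ as an object of $\mathcal{C}or_{k,\Lambda}$ (it arises from a map of corepresented functors on $\mathbf{HI}_k(\Lambda)$), so one gets a $2$-commutative square over the localization functor $\mathcal{C}or_{k,\Lambda}^{pv} \to S_b^{-1}\mathcal{C}or_{k,\Lambda}^{pv}$, which is all that full faithfulness requires.

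The genuine gap is in your first step. Proposition \ref{corpv eq corvar} relates the two categories of \emph{irreducible} objects ($S_b^{-1}\mathcal{C}or_{k,\Lambda}^{pv} \cong S_b^{-1}\mathcal{C}or_{k,\Lambda}^{var}$), and the remark following it relates the two categories of \emph{possibly reducible} objects ($S_b^{-1}\mathcal{C}or_{k,\Lambda} \cong S_b^{-1}\mathcal{C}or_{k,\Lambda}^{prop}$); neither statement, nor any combination of the two, compares Hom groups involving disjoint unions with Hom groups between irreducible schemes, which is precisely what passing from $\mathcal{C}or_{k,\Lambda}^{prop}$ to $\mathcal{C}or_{k,\Lambda}^{pv}$ demands. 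Concretely, writing $X = \bigsqcup_i X_i$ and $Y = \bigsqcup_j Y_j$ with $X_i, Y_j \in \mathcal{C}or_{k,\Lambda}^{pv}$, the reduction needs
\begin{equation*}
\Hom_{S_b^{-1}\mathcal{C}or_{k,\Lambda}^{prop}}(X,Y) \cong \bigoplus_{i,j}\Hom_{S_b^{-1}\mathcal{C}or_{k,\Lambda}^{pv}}(X_i,Y_j),
\end{equation*}
\textit{i.e.}, that $S_b^{-1}\mathcal{C}or_{k,\Lambda}^{pv} \to S_b^{-1}\mathcal{C}or_{k,\Lambda}^{prop}$ is fully faithful and that disjoint unions remain biproducts after localization; this, combined with additivity of $\mathbf{H}_0^S(-;\Lambda)$, is what makes full faithfulness on $\mathcal{C}or_{k,\Lambda}^{pv}$ propagate to $\mathcal{C}or_{k,\Lambda}^{prop}$. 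That decomposition is exactly what the paper invokes \cite[Thm. 6.4]{KS1} for, here and again when asserting (in the example after Proposition \ref{prop Suslin fully faithful}) that $\mathcal{C}or_{k,\Lambda}^{pv}$ is a birationally fully faithful subcategory of $\mathcal{C}or_{k,\Lambda}^{prop}$. So your reduction is repaired simply by replacing your two citations with \cite[Thm. 6.4]{KS1}; with that change the rest of your argument goes through as written.
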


\begin{proof}
By \cite[Thm. 6.4]{KS1}, we only need to show that the functor
\begin{equation}\label{eq H0S Sb-1Corpv}
\mathbf{H}_0^S(-;\Lambda) : S_b^{-1}\mathcal{C}or_{k,\Lambda}^{pv} \to \mathbf{NST}_k(\Lambda)
\end{equation}
is fully faithful. By Lemma \ref{eq. br. sh. cor} and Proposition \ref{corpv eq corvar}, we obtain equivalences of categories
\begin{equation*}
\mathbf{NST}_k^{br}(\Lambda) \xleftarrow{\cong} \mathcal{L}in(S_b^{-1}\mathcal{C}or_{k,\Lambda}^{var},\Lambda) \xrightarrow{\cong} \mathcal{L}in(S_b^{-1}\mathcal{C}or_{k,\Lambda}^{pv},\Lambda).
\end{equation*}
Thus we have a functor
\begin{equation*}
\mathscr{H}_{tr} : S_b^{-1}\mathcal{C}or_{k,\Lambda}^{pv} \hookrightarrow \mathcal{L}in(S_b^{-1}\mathcal{C}or_{k,\Lambda}^{pv},\Lambda) \cong \mathbf{NST}_k^{br}(\Lambda) \hookrightarrow \mathbf{NST}_k(\Lambda)
\end{equation*}
where the first functor is the Yoneda embedding. Hence $\mathscr{H}_{tr}$ is fully faithful by Yoneda's lemma in $S_b^{-1}\mathcal{C}or_{k,\Lambda}^{pv}$. By Theorem \ref{str thm of Suslin}, there exists a natural isomorphism
\begin{equation*}
\mathscr{H}_{tr}(X) \cong S_b^{-1}\mathcal{C}or_{k,\Lambda}(-,X) \cong \Lambda\pi_{0,tr}^{b\aone}(X) \cong \mathbf{H}_0^S(X;\Lambda)
\end{equation*}
for all $X \in \mathcal{C}or_{k,\Lambda}^{pv}$. Thus the diagram
\begin{equation*}
\begin{CD}
\mathcal{C}or_{k,\Lambda}^{pv} @>{\mathbf{H}_0^S(-;\Lambda)}>>  \mathbf{NST}_k(\Lambda) \\
@VVV @| \\
S_b^{-1}\mathcal{C}or_{k,\Lambda}^{pv} @>{\mathscr{H}_{tr}}>>  \mathbf{NST}_k(\Lambda)
\end{CD}
\end{equation*}
is $2$-commutative. Therefore, since $\mathscr{H}_{tr}$ is fully faithful, so is the functor \eqref{eq H0S Sb-1Corpv}.
\end{proof}

We also introduce birationally fully faithful subcategories of $\mathcal{C}or_{k,\Lambda}^{prop}$.

\begin{defi}
Let $\mathcal{C}$ be a full subcategory of $\mathcal{C}or_{k,\Lambda}^{prop}$. Then $\mathcal{C}$ is called a birationally fully faithful subcategory of $\mathcal{C}or_{k,\Lambda}^{prop}$, if the functor ${S_b^{-1}\mathcal{C} \to S_b^{-1}\mathcal{C}or_{k,\Lambda}^{prop}}$ is fully faithful.
\end{defi}

\begin{examp}
Clearly, $\mathcal{C}or_{k,\Lambda}^{prop}$ is a birationally fully faithful subcategory of itself. By \cite[Thm. 6.4]{KS1}, the subcategory $\mathcal{C}or_{k,\Lambda}^{pv} \subseteq \mathcal{C}or_{k,\Lambda}^{prop}$ is also birationally fully faithful.
\end{examp}

For a subcategory $\mathcal{C} \subseteq \mathcal{C}or_{k,\Lambda}^{prop}$, we denote $\mathbf{Im}_\Lambda^{\mathcal{C}}\mathbf{H}_0^S$ for the full subcategory of $\mathbf{NST}_k(\Lambda)$ spanned by sheaves with transfers isomorphic to $\mathbf{H}_0^S(X;\Lambda)$ for some $X \in \mathcal{C}$. We prove universal birational invariance of the zeroth Suslin homology.

\begin{thm}\label{univ. br. inv. tr.}
Assume $k$ perfect and admits a resolution of singularities. Let $\mathcal{C}$ be a birationally fully faithful subcategory of $\mathcal{C}or_{k,\Lambda}^{prop}$ and $\mathcal{A}$ be an arbitrary category enriched by $\Lambda$-modules.
\begin{enumerate}
\item Let $F : \mathcal{C} \to \mathcal{A}$ be an arbitrary $\Lambda$-linear functor which sends each birational morphism to an isomorphism. Then there exists one and only one (up to a natural equivalence) $\Lambda$-linear functor ${F_{S_b} : \mathbf{Im}_\Lambda^{\mathcal{C}}\mathbf{H}_0^S \to \mathcal{A}}$ such that the diagram
\begin{equation*}
\xymatrix{
\mathcal{C} \ar[r]^-F \ar[d] &\mathcal{A} \\
\mathbf{Im}_\Lambda^{\mathcal{C}}\mathbf{H}_0^S \ar@{.>}[ru]_-{F_{S_b}}
}
\end{equation*}
is $2$-commutative.
\item Let $F' : \mathcal{C}^{op} \to \mathcal{A}$ be an arbitrary $\Lambda$-linear functor which sends each birational morphism to an isomorphism. Then there exists one and only one (up to a natural equivalence) $\Lambda$-linear functor ${F'_{S_b} : \mathbf{Im}_\Lambda^{\mathcal{C}}\mathbf{H}_0^S \to \mathcal{A}}$ such that the diagram
\begin{equation*}
\xymatrix{
\mathcal{C}^{op} \ar[r]^-{F'} \ar[d] &\mathcal{A} \\
(\mathbf{Im}_\Lambda^{\mathcal{C}}\mathbf{H}_0^S)^{op} \ar@{.>}[ru]_-{F'_{S_b}}
}
\end{equation*}
is $2$-commutative.
\end{enumerate}
\end{thm}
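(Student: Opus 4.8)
The plan is to follow the proof of Theorem \ref{univ. birat. of H0A1} almost verbatim, replacing Proposition \ref{A1homology is fully faithful} by its Suslin analogue Proposition \ref{prop Suslin fully faithful}. The essential simplification is that $\mathcal{C}or_{k,\Lambda}^{prop}$ is \emph{already} enriched by $\Lambda$-modules, so in place of the free $\Lambda$-linearization $\Lambda(S_b^{-1}\Smk^{prop})$ used in the $\aone$-homology argument we may work directly with the localization $S_b^{-1}\mathcal{C}$, which inherits a $\Lambda$-linear structure from $\mathcal{C}$ through the calculus of fractions of Kahn--Sujatha \cite{KS1}. Thus the role played there by the universality of $\mathcal{C} \to \Lambda(S_b^{-1}\mathcal{C})$ is taken over here by the universality of the $\Lambda$-linear localization $\mathcal{C} \to S_b^{-1}\mathcal{C}$.

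First I would record that the Suslin homology functor is fully faithful on $S_b^{-1}\mathcal{C}$. Since $\mathcal{C}$ is a birationally fully faithful subcategory, the functor $S_b^{-1}\mathcal{C} \to S_b^{-1}\mathcal{C}or_{k,\Lambda}^{prop}$ is fully faithful by definition; composing it with the fully faithful functor of Proposition \ref{prop Suslin fully faithful} shows that
\begin{equation*}
\mathbf{H}_0^S(-;\Lambda) : S_b^{-1}\mathcal{C} \to \mathbf{NST}_k(\Lambda)
\end{equation*}
is fully faithful. By the very definition of $\mathbf{Im}_\Lambda^{\mathcal{C}}\mathbf{H}_0^S$, the essential image of this composite is exactly $\mathbf{Im}_\Lambda^{\mathcal{C}}\mathbf{H}_0^S$, so I obtain an equivalence $S_b^{-1}\mathcal{C} \xrightarrow{\cong} \mathbf{Im}_\Lambda^{\mathcal{C}}\mathbf{H}_0^S$ fitting into a $2$-commutative square
\begin{equation*}
\begin{CD}
\mathcal{C} @= \mathcal{C} \\
@VVV @VV{\mathbf{H}_0^S(-;\Lambda)}V \\
S_b^{-1}\mathcal{C} @>\cong>> \mathbf{Im}_\Lambda^{\mathcal{C}}\mathbf{H}_0^S,
\end{CD}
\end{equation*}
where the left vertical arrow is the localization functor. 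Here Theorem \ref{str thm of Suslin} is what identifies $\mathbf{H}_0^S(X;\Lambda)$ with the representable presheaf $S_b^{-1}\mathcal{C}or_{k,\Lambda}(-,X) \cong \Lambda\pi_{0,tr}^{b\aone}(X)$, exactly as in the proof of Proposition \ref{prop Suslin fully faithful}. With this equivalence in hand, assertion (1) reduces to the universal property of $\mathcal{C} \to S_b^{-1}\mathcal{C}$: any $\Lambda$-linear functor $F : \mathcal{C} \to \mathcal{A}$ inverting birational morphisms factors uniquely (up to natural equivalence) through a $\Lambda$-linear functor $S_b^{-1}\mathcal{C} \to \mathcal{A}$, and transporting along the equivalence produces the desired $F_{S_b}$. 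Assertion (2) then follows formally by passing to opposite categories throughout the same diagram.

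The step I expect to require the most care is the verification that $S_b^{-1}\mathcal{C}$ genuinely carries a $\Lambda$-linear structure for which \emph{both} the equivalence above and the universal property invoked above are $\Lambda$-linear. Unlike the $\aone$-homology situation, where $\Lambda$-linearity was installed by hand through $\Lambda(-)$, here it must be extracted from the fact that the family of finite correspondences associated with birational morphisms admits a calculus of fractions in $\mathcal{C}or_{k,\Lambda}^{prop}$ (and in $\mathcal{C}$), so that the Hom-$\Lambda$-modules of $S_b^{-1}\mathcal{C}$ are computed from those of $\mathcal{C}$ via fractions and the localization functor is additive. Granting this, the universal property among $\Lambda$-linear functors is immediate from the ordinary Gabriel--Zisman universal property together with the observation that $\Lambda$-linearity of a functor out of $S_b^{-1}\mathcal{C}$ may be tested on the image of $\mathcal{C}$, which generates all morphisms up to formal inverses of the inverted correspondences.
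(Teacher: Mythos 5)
Your proposal follows essentially the same route as the paper's own proof: Proposition \ref{prop Suslin fully faithful} combined with the birational full faithfulness of $\mathcal{C}$ yields the equivalence $S_b^{-1}\mathcal{C} \cong \mathbf{Im}_\Lambda^{\mathcal{C}}\mathbf{H}_0^S$ (compatibly with $\mathbf{H}_0^S(-;\Lambda)$ on $\mathcal{C}$), and both assertions then follow from the universal property of the localization $\mathcal{C} \to S_b^{-1}\mathcal{C}$, with (2) obtained by passing to opposite categories. The only difference is one of detail: you make explicit the $\Lambda$-linear structure on $S_b^{-1}\mathcal{C}$ and the linearity of the factored functor, points which the paper's two-line proof leaves implicit.
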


\begin{proof}
By Proposition \ref{prop Suslin fully faithful}, the functors $\mathcal{C} \to \mathbf{Im}_\Lambda^{\mathcal{C}}\mathbf{H}_0^S$ and $\mathcal{C}^{op} \to (\mathbf{Im}_\Lambda^{\mathcal{C}}\mathbf{H}_0^S)^{op}$ are equivalences of categories. Thus this theorem follows from the universality of localizations of categories.
\end{proof}

\begin{rem}
Like Remark \ref{rmk H0A1 ext Modbr A}, the functor $F_{S_b}$ (resp. $F'_{S_b}$) in Theorem \ref{univ. br. inv. tr.} is canonically extended to ${\mathbf{NST}^{br}_k(\Lambda) \to \mathcal{A}}$ (resp. ${(\mathbf{NST}^{br}_k(\Lambda))^{op} \to \mathcal{A}}$) if ${\mathcal{C}or_{k,\Lambda}^{prop}} \subseteq \mathcal{C}$ and $\mathcal{A}$ is cocomplete.
\end{rem}

\subsection{Proper birational invariance}

In this subsection, we prove that a proper birational morphism of smooth (not necessary proper) $k$-varieties induces an isomorphism of the zeroth $\aone$- and Suslin homology. This is a refinement of Asok's result \cite[Thm. 3.9]{As}.

\begin{prop}\label{prop. br. inv.}
Let $f : X \to Y$ be a proper birational morphism of smooth $k$-varieties.
\begin{enumerate}
\item The induced morphisms
\begin{equation*}
\HAone{0}(X;\Lambda) \to \HAone{0}(Y;\Lambda)
\end{equation*}
is an isomorphisms of sheaves.
\item The induced morphisms
\begin{equation*}
\mathbf{H}_0^S(X;\Lambda) \to \mathbf{H}_0^S(Y;\Lambda)
\end{equation*}
is an isomorphism of sheaves with transfers.
\end{enumerate}
\end{prop}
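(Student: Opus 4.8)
The plan is to deduce both isomorphisms from Lemma \ref{proper birationality} by combining the adjunction characterizations of $\HAone{0}$ and $\mathbf{H}_0^S$ with Yoneda's lemma. Note that the properness of $X$ or $Y$ is never used; only the properness of $f$ enters, and it enters solely through Lemma \ref{proper birationality}. This is exactly what makes the statement a genuine refinement of Asok's birational invariance.

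First I would treat (1). Recall from \cite[Lem. 3.3]{As} that $\HAone{0}(-;\Lambda)$ is left adjoint to the forgetful functor $\Mod^{\aone}(\Lambda) \to \Shvk$, so that for every $M \in \Mod^{\aone}(\Lambda)$ there is a natural isomorphism
\begin{equation*}
\Hom_{\Mod^{\aone}(\Lambda)}(\HAone{0}(X;\Lambda), M) \cong \Hom_{\Shvk}(X, M) \cong M(X),
\end{equation*}
the last step being Yoneda's lemma in $\Smk$. By naturality of this composite in $X$, applying $\Hom_{\Mod^{\aone}(\Lambda)}(-, M)$ to the morphism $\HAone{0}(f) : \HAone{0}(X;\Lambda) \to \HAone{0}(Y;\Lambda)$ recovers exactly the pullback $f^* : M(Y) \to M(X)$. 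Lemma \ref{proper birationality} says precisely that this $f^*$ is an isomorphism for every strictly $\aone$-invariant $M$, since $f$ is proper and birational. Hence $\Hom_{\Mod^{\aone}(\Lambda)}(\HAone{0}(f), M)$ is an isomorphism for every $M \in \Mod^{\aone}(\Lambda)$, and it is natural in $M$; Yoneda's lemma in $\Mod^{\aone}(\Lambda)$ then forces $\HAone{0}(f)$ itself to be an isomorphism.

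For (2) I would run the identical argument in $\mathbf{HI}_k(\Lambda)$. Here $\mathbf{H}_0^S(-;\Lambda)$ plays the role of the left adjoint, giving $\Hom_{\mathbf{HI}_k(\Lambda)}(\mathbf{H}_0^S(X;\Lambda), M) \cong M(X)$ naturally in $X$ (again \cite[Lem. 3.3]{As}, used exactly as in the proof of Theorem \ref{str thm of Suslin}). The only extra point is to confirm that Lemma \ref{proper birationality} still applies: every $M \in \mathbf{HI}_k(\Lambda)$ is strictly $\aone$-invariant as a Nisnevich sheaf on $\Smk$ by \cite[Thm. 13.8]{MVW}, so $\Gamma_*M$ is strictly $\aone$-invariant, and the map on sections induced by the graph correspondence $\Gamma(f)$ agrees with the underlying pullback $(\Gamma_*M)(f)$. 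Lemma \ref{proper birationality} then gives that this map is an isomorphism, and Yoneda's lemma in $\mathbf{HI}_k(\Lambda)$ concludes as before.

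I expect the only delicate step to be the verification that the adjunction isomorphism, applied to the morphism induced by $f$, is literally the pullback $f^*$ appearing in Lemma \ref{proper birationality}. This is routine naturality bookkeeping, but it must be stated with care in the Suslin case, where one has to check that the transfer-theoretic map induced by $f$ restricts on the underlying presheaf to the ordinary pullback, so that the cited lemma is genuinely applicable.
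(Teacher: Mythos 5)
Your proposal is correct and follows essentially the same route as the paper: both arguments reduce the statement to Lemma \ref{proper birationality} via the adjunction characterization of $\HAone{0}(-;\Lambda)$ (resp. $\mathbf{H}_0^S(-;\Lambda)$) from \cite[Lem. 3.3]{As}, and then conclude by Yoneda's lemma in $\Mod^{\aone}(\Lambda)$ (resp. $\mathbf{HI}_k(\Lambda)$), with your extra remarks on naturality and on objects of $\mathbf{HI}_k(\Lambda)$ being strictly $\aone$-invariant via \cite[Thm. 13.8]{MVW} being exactly the bookkeeping the paper leaves implicit.
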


\begin{proof}
By Lemma \ref{proper birationality}, for every ${M \in \Mod^{\aone}(\Lambda)}$ the induced map ${M(Y) \to M(X)}$ is an isomorphism. Thus \cite[Lem. 3.3]{As} gives isomorphisms
\begin{equation*}
\Hom_{\Mod^{\aone}(\Lambda)}(\HAone{0}(Y;\Lambda),M) \cong M(Y) \xrightarrow{\cong} M(X) \cong \Hom_{\Mod^{\aone}(\Lambda)}(\HAone{0}(X;\Lambda),M)
\end{equation*}
for all $M \in \Mod^{\aone}(\Lambda)$. Thus Yoneda's lemma in $\Mod^{\aone}(\Lambda)$ induces an isomorphism
\begin{equation*}
\HAone{0}(X;\Lambda) \xrightarrow{\cong} \HAone{0}(Y;\Lambda).
\end{equation*}
Similarly, \cite[Lem. 3.3]{As} also gives an isomorphism
\begin{align*}
\Hom_{\mathbf{HI}_k(\Lambda)}(\mathbf{H}_0^S(Y;\Lambda),M') \xrightarrow{\cong} \Hom_{\mathbf{HI}_k(\Lambda)}(\mathbf{H}_0^S(X;\Lambda),M')
\end{align*}
for all $M' \in \mathbf{HI}_k(\Lambda)$. Thus Yoneda's lemma in $\mathbf{HI}_k(\Lambda)$ induces an isomorphism
\begin{equation*}
\mathbf{H}_0^S(X;\Lambda) \xrightarrow{\cong} \mathbf{H}_0^S(Y;\Lambda)
\end{equation*}
in $\mathbf{HI}_k(\Lambda)$.
\end{proof}

\section{Applications to $\aone$-homotopy theory}\label{App}

In this section, we give some applications to $\aone$-homotopy theory. A morphism $X \to Y$ in $\Smk$ is called a $S^1$-stable $\aone$-$0$-equivalence, if the induced morphism
\begin{equation*}
\pi_0^{S}(\Sigma_{S^1}^{\infty}(X_+)) \to \pi_0^{S}(\Sigma_{S^1}^{\infty}(Y_+))
\end{equation*}
is an isomorphism. Note that $X \to Y$ is a $S^1$-stable $\aone$-$0$-equivalence if and only if the induced morphism $\HAone{0}(X;\mathbb{Z}) \to \HAone{0}(Y;\mathbb{Z})$ is an isomorphism by \cite[Prop. 2.1]{AH}. When $X$ and $Y$ are proper, we also obtain other equivalent conditions.

\begin{prop}\label{equiv. cond. of S1-stb 0-eq}
Assume $k$ admits a resolution of singularities. Let ${f : X \to Y}$ be a morphism of smooth proper $k$-schemes. Then the following conditions are equivalent.
\begin{enumerate}
\item\label{eq cond S1 stb 0 eq} The morphism $f$ is a $S^1$-stable $\aone$-$0$-equivalence.
\item\label{eq cond isom in Sb-1Smk} The morphism $f$ is an isomorphism in $S_b^{-1}\Smk$.
\item\label{eq cond bijec R-eq class} The induced map $X(K)/R \to Y(K)/R$ is bijective for all $K \in \mathcal{F}_k$.
\end{enumerate}
\end{prop}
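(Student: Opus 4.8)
The plan is to show that all three conditions are manifestations of a single statement, namely that the induced map $\pi_0^{b\aone}(f)\colon \pi_0^{b\aone}(X)\to\pi_0^{b\aone}(Y)$ is an isomorphism of birational sheaves. Both the zeroth $\aone$-homology and the $R$-equivalence quotients of the proper schemes $X$ and $Y$ are controlled by this sheaf, so the proposition reduces to a formal comparison built on the structure theorem and the representability of $\pi_0^{b\aone}$.

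First I would prove $\eqref{eq cond S1 stb 0 eq}\Leftrightarrow\eqref{eq cond bijec R-eq class}$. By the characterization recalled before the statement (\cite[Prop. 2.1]{AH}), condition $\eqref{eq cond S1 stb 0 eq}$ is equivalent to $\HAone{0}(f;\mathbb{Z})$ being an isomorphism of sheaves. Since $X$ and $Y$ are proper, Corollary \ref{free R-mod of R-eq} identifies $\HAone{0}(X;\mathbb{Z})(U)$ with the free abelian group on $X(k(U))/R$, functorially in $U\in\Smk^{var}$, and exhibits $\HAone{0}(f;\mathbb{Z})(U)$ as the $\mathbb{Z}$-linearization of the map $X(k(U))/R\to Y(k(U))/R$. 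Such a linearized map is an isomorphism precisely when the underlying map of sets is a bijection; moreover $\HAone{0}(X;\mathbb{Z})$ is a birational sheaf, so by \textbf{B1} and \textbf{B2} its sections on an arbitrary $U\in\Smk$ are determined by those at the generic points, and the fields $k(U)$ for $U\in\Smk^{var}$ exhaust $\mathcal{F}_k$. Hence $\HAone{0}(f;\mathbb{Z})$ is an isomorphism iff $X(K)/R\to Y(K)/R$ is bijective for all $K\in\mathcal{F}_k$, which is exactly $\eqref{eq cond bijec R-eq class}$.

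Next I would prove $\eqref{eq cond bijec R-eq class}\Leftrightarrow\eqref{eq cond isom in Sb-1Smk}$. By Lemma \ref{Req and pibr} condition $\eqref{eq cond bijec R-eq class}$ says that $\pi_0^{b\aone}(f)$ is bijective on $K$-points for every $K\in\mathcal{F}_k$; the same \textbf{B1}/\textbf{B2} reduction---equivalently, the conservativity of the fully faithful restriction functor $\mathrm{Res}$ of \eqref{restriction functor} established in the proof of Proposition \ref{br isom bA1}---upgrades this to the assertion that $\pi_0^{b\aone}(f)$ is an isomorphism in $\Shvk^{br}$. Since birational sheaves are insensitive to birational modifications, the assignment $X\mapsto\pi_0^{b\aone}(X)$ inverts every birational morphism and so factors through $S_b^{-1}\Smk$; and under the equivalence $\Shvk^{br}\cong\mathcal{P}resh(S_b^{-1}\Smk^{var})$ of Lemma \ref{eq. br. sh.}, the sheaf $\pi_0^{b\aone}(X)$ is represented by $X$. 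Yoneda's lemma in $S_b^{-1}\Smk^{var}$ then turns the statement that $\pi_0^{b\aone}(f)$ is an isomorphism into the statement that $f$ is an isomorphism in $S_b^{-1}\Smk^{var}$, which for proper $X,Y$ is condition $\eqref{eq cond isom in Sb-1Smk}$.

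I expect the main obstacle to be purely bookkeeping: matching the notion of isomorphism in the localization $S_b^{-1}\Smk$ appearing in $\eqref{eq cond isom in Sb-1Smk}$ with the representability statement, which naturally lives in $S_b^{-1}\Smk^{var}$, and (if $X,Y$ are allowed to be reducible) tracking connected components through \textbf{B1}. I would settle this using the Kahn--Sujatha equivalence \eqref{eq. cat Smpv Smvar} together with the birational full faithfulness recorded in Example \ref{ex. br. ff}, which guarantee that these localizations agree on the proper objects under consideration. As a cross-check, $\eqref{eq cond S1 stb 0 eq}\Leftrightarrow\eqref{eq cond isom in Sb-1Smk}$ also follows directly from the conservativity and faithfulness of $\HAone{0}(-;\mathbb{Z})$ on $S_b^{-1}\Smk^{prop}$ (Corollary \ref{cons. and faithful}) combined with \cite[Prop. 2.1]{AH}, once the same identification of the ambient localization is granted.
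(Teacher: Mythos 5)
Your proposal is correct in substance, but it reorganizes the argument rather than reproducing the paper's. The paper proves a cycle: \eqref{eq cond S1 stb 0 eq} $\Rightarrow$ \eqref{eq cond isom in Sb-1Smk} by the conservativity of $\HAone{0}(-;\Lambda)$ on $S_b^{-1}\Smk^{prop}$ (Corollary \ref{cons. and faithful}); \eqref{eq cond isom in Sb-1Smk} $\Rightarrow$ \eqref{eq cond bijec R-eq class} by quoting \cite[Thm. 6.6.3]{KS2}; and \eqref{eq cond bijec R-eq class} $\Rightarrow$ \eqref{eq cond S1 stb 0 eq} by \cite[Thm. 6.2.1]{AM} together with Theorem \ref{str H0A1}. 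You instead prove two biconditionals pivoting on \eqref{eq cond bijec R-eq class}: your \eqref{eq cond S1 stb 0 eq} $\Leftrightarrow$ \eqref{eq cond bijec R-eq class} contains the paper's third implication and also its converse, which you get from the elementary observation that the $\mathbb{Z}$-linearization of a map of sets is an isomorphism exactly when the map is a bijection, combined with the \textbf{B1}/\textbf{B2} reduction to function fields; and your \eqref{eq cond bijec R-eq class} $\Leftrightarrow$ \eqref{eq cond isom in Sb-1Smk} re-proves by hand, via representability of $\pi_0^{b\aone}(X)$ in $\mathcal{P}resh(S_b^{-1}\Smk^{var})$ (proof of Lemma \ref{adj pi b aone}) and Yoneda, the special case of the paper's full-faithfulness machinery that is actually needed. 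The net effect is that your main line avoids Corollary \ref{cons. and faithful} (hence Proposition \ref{A1homology is fully faithful}) entirely, at the cost of inlining the same Yoneda-type argument that underlies it; both routes ultimately rest on Theorem \ref{str H0A1} and Lemma \ref{Req and pibr}.

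One concrete weak spot: the bookkeeping you defer to the last paragraph is not settled by the results you cite. The equivalence \eqref{eq. cat Smpv Smvar} and Example \ref{ex. br. ff} compare $S_b^{-1}\Smk^{pv}$, $S_b^{-1}\Smk^{var}$ and subcategories of $\Smk^{prop}$ with one another; none of them involves the localization $S_b^{-1}\Smk$ of the full category, which is where condition \eqref{eq cond isom in Sb-1Smk} lives. The direction you need for \eqref{eq cond bijec R-eq class} $\Rightarrow$ \eqref{eq cond isom in Sb-1Smk} is harmless, since the canonical functor $S_b^{-1}\Smk^{var} \to S_b^{-1}\Smk$ preserves isomorphisms; but the converse direction requires computing $\Hom$ out of a smooth model inside $S_b^{-1}\Smk$ itself, i.e., the identification $\Hom_{S_b^{-1}\Smk}(U,X) \cong X(k(U))/R$ for $X$ proper. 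That is exactly the content of the Kahn--Sujatha theorem \cite[Thm. 6.6.3]{KS2} that the paper invokes at this step, so the fix is to cite it (in the ambient localization $S_b^{-1}\Smk$), not the equivalences you list. With that substitution, and with the disjoint-union bookkeeping for reducible $X,Y$ handled through \textbf{B1} as you indicate, your proof goes through.
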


\begin{proof}
Corollary \ref{cons. and faithful} shows that \eqref{eq cond S1 stb 0 eq} $\Rightarrow$ \eqref{eq cond isom in Sb-1Smk}. Moreover, \eqref{eq cond isom in Sb-1Smk} $\Rightarrow$ \eqref{eq cond bijec R-eq class} follows from \cite[Thm. 6.6.3]{KS2}. On the other hand, \eqref{eq cond bijec R-eq class} is equivalent to the condition that the induced morphism $\pi_0^{b\aone}(X) \to \pi_0^{b\aone}(Y)$ is an isomorphism by \cite[Thm. 6.2.1]{AM}. Thus Theorem \ref{str H0A1} shows that the induced map $\HAone{0}(X;\mathbb{Z}) \to \HAone{0}(Y;\mathbb{Z})$ is an isomorphism. Hence, we have \eqref{eq cond bijec R-eq class} $\Rightarrow$ \eqref{eq cond S1 stb 0 eq}.
\end{proof}

Next, we consider the $\aone$-connectedness of smooth proper varieties (see definition \cite[p.110]{MV}). In \cite{As}, Asok proves that a smooth proper $k$-variety $X$ is $\aone$-connected if and only if the structure morphism $X \to \Spec k$ induces an isomorphism of zeroth $\aone$-homology sheaves ${\HAone{0}(X;\Lambda) \xrightarrow{\cong} \Lambda}$.

\begin{rem}\label{rem A1-conn and isom to pt}
Proposition \ref{equiv. cond. of S1-stb 0-eq} implies that a smooth proper $k$-variety $X$ is $\aone$-connected if and only if the structure morphism $X \to \Spec k$ is an isomorphism in $S_b^{-1}\Smk$, when $k$ admits a resolution of singularities. However, this assertion holds without the assumption on resolution of singularities. Indeed, $X$ is $\aone$-connected if and only if $\#(X(K)/R)=1$ for all $K \in \mathcal{F}_k$ by \cite[Cor. 2.4.4]{AM}. On the other hand, by \cite[Thm. 8.5.1]{KS2} this is equivalent to the condition that the morphism $X \to \Spec k$ is an isomorphism in $S_b^{-1}\Smk$.
\end{rem}

The following gives another equivalent condition of the $\aone$-connectedness of $X$. 

\begin{prop}
Assume $k$ admits a resolution of singularities. A smooth proper $k$-variety $X$ is $\aone$-connected if and only if
\begin{equation*}
b_0^{\aone}(X_{k(X)}) \leq 1 \leq b_0^{\aone}(X).
\end{equation*}
\end{prop}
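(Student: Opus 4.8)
The plan is to translate both the conclusion and the hypothesis into counts of $R$-equivalence classes and then to exploit that $\Spec k$ is a terminal object of $S_b^{-1}\Smk^{var}$, so that a morphism into it is an isomorphism as soon as it admits a section. First I would record the two numerical translations. By Theorem~\ref{A1-betti and rat pt} we have $b_0^{\aone}(X) = \#(X(k)/R)$, and reading $X_{k(X)}$ as a smooth proper variety over the base field $k(X)$ and applying the same theorem there gives $b_0^{\aone}(X_{k(X)}) = \#(X_{k(X)}(k(X))/R) = \#(X(k(X))/R)$. Since the generic point of $X$ furnishes a canonical element of $X_{k(X)}(k(X))$, we always have $b_0^{\aone}(X_{k(X)}) \geq 1$; hence $b_0^{\aone}(X_{k(X)}) \leq 1$ is equivalent to $\#(X(k(X))/R) = 1$, while $b_0^{\aone}(X) \geq 1$ is equivalent to $X(k) \neq \emptyset$ by the second part of Theorem~\ref{A1-betti and rat pt}.

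For the forward implication I would note that if $X$ is $\aone$-connected then Remark~\ref{rem A1-conn and isom to pt} gives $\#(X(K)/R) = 1$ for every $K \in \mathcal{F}_k$. Taking $K = k$ and $K = k(X)$ — the latter lies in $\mathcal{F}_k$ because $X$ is smooth, so that $k(X)/k$ is finitely generated and separable — yields $b_0^{\aone}(X) = 1$ and $b_0^{\aone}(X_{k(X)}) = 1$ by the translations above, which in particular satisfies the required inequalities.

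For the converse I would argue inside $S_b^{-1}\Smk^{var}$. Recall from Lemma~\ref{Req and pibr} (together with \cite[Thm.~6.6.3]{KS2}) that for $Z \in \Smk^{prop}$ the sheaf $\pi_0^{b\aone}(Z)$ is represented by $Z$, with $\Hom_{S_b^{-1}\Smk^{var}}(U,Z) \cong Z(k(U))/R$ when $U$ is a smooth model of $k(U)$. Taking $Z = \Spec k$ shows that $\Spec k$ is terminal in $S_b^{-1}\Smk^{var}$, and taking $Z = U = X$ gives $\Hom_{S_b^{-1}\Smk^{var}}(X,X) \cong X(k(X))/R$. Assuming the two inequalities, so that $X(k) \neq \emptyset$ and $\#(X(k(X))/R) = 1$, a point $p \in X(k)$ provides $\bar p : \Spec k \to X$ in $S_b^{-1}\Smk^{var}$; writing $s : X \to \Spec k$ for the structure map, the composite $\bar p \circ s$ lies in the singleton $\Hom_{S_b^{-1}\Smk^{var}}(X,X) \cong X(k(X))/R$ and therefore equals $\mathrm{id}_X$, while $s \circ \bar p = \mathrm{id}_{\Spec k}$ automatically since $\Spec k$ is terminal. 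Thus $s$ is an isomorphism in $S_b^{-1}\Smk^{var}$, so $\pi_0^{b\aone}(X) \cong \pi_0^{b\aone}(\Spec k)$ is the terminal sheaf; evaluating at smooth models gives $\#(X(K)/R) = 1$ for every $K \in \mathcal{F}_k$, whence $X$ is $\aone$-connected by Remark~\ref{rem A1-conn and isom to pt}.

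The only delicate point is the numerical translation in the first step: one must interpret $b_0^{\aone}(X_{k(X)})$ in the $\aone$-homotopy theory over the base field $k(X)$, check that $k(X)$ again admits a resolution of singularities so that Theorem~\ref{A1-betti and rat pt} applies there, and observe that the generic point forces $b_0^{\aone}(X_{k(X)}) \geq 1$ so that the upper bound collapses to an exact count of one $R$-class over $k(X)$. Once this is in place, the heart of the argument is the purely formal observation that a morphism to the terminal object of $S_b^{-1}\Smk^{var}$ is an isomorphism precisely when it admits a section, which is exactly what the lower bound $b_0^{\aone}(X) \geq 1$ and the upper bound $b_0^{\aone}(X_{k(X)}) \leq 1$ together provide.
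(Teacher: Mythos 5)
Your proof is correct, and its skeleton is the same as the paper's: translate both inequalities into counts of $R$-equivalence classes, note that the generic point forces $b_0^{\aone}(X_{k(X)}) \geq 1$ so the hypothesis collapses to $X(k) \neq \emptyset$ and $\#(X(k(X))/R) = 1$, deduce that the structure map is an isomorphism in the birational localization, and conclude by Remark~\ref{rem A1-conn and isom to pt}. The difference is one of self-containedness rather than strategy. For the converse, where you prove by hand that $\Spec k$ is terminal in $S_b^{-1}\Smk^{var}$ and that the singleton $\Hom_{S_b^{-1}\Smk^{var}}(X,X) \cong X(k(X))/R$ forces any section of the structure map to be a two-sided inverse, the paper simply cites \cite[Thm.~8.5.1]{KS2}, which encapsulates exactly this terminal-object argument; for the forward direction, where you use \cite[Cor.~2.4.4]{AM} (via the Remark) at $K = k$ and $K = k(X)$, the paper cites \cite[Thm.~4.14]{As}. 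Your version buys independence from those two external results, using only Lemma~\ref{Req and pibr} and statements already proved in the paper, at the cost of a few extra lines. Both treatments share the same delicate point, which you at least flag while the paper passes over it silently: $b_0^{\aone}(X_{k(X)})$ is literally a Betti number over the base field $k(X)$, so equating it with $\#(X_{k(X)}(k(X))/R)$ requires Theorem~\ref{A1-betti and rat pt} over $k(X)$, hence resolution of singularities for $k(X)$-varieties (harmless in characteristic zero, but not formally contained in the hypothesis on $k$); alternatively one can read $b_0^{\aone}(X_{k(X)})$ as $\dim_{\mathbb{Q}} \HAone{0}(X;\mathbb{Q})(k(X))$, in which case Corollary~\ref{free R-mod of R-eq} gives the identification directly over $k$ with no extra assumption --- this appears to be what the paper's own citation of that corollary intends.
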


\begin{proof}
``Only if'' follows from \cite[Thm. 4.14]{As}. We prove ``if''. Since $b_0^{\aone}(X) \neq 0$, the variety $X$ has a $k$-rational point by Theorem \ref{A1-betti and rat pt}. Moreover, since $X_{k(X)}$ also has a $k(X)$-rational point, we have ${b_0^{\aone}(X_{k(X)}) = 1}$. This implies
\begin{equation*}
\#(X(k(X))/R) = \#(X_{k(X)}(k(X))/R) = 1
\end{equation*}
by Corollary \ref{free R-mod of R-eq}. Thus \cite[Thm. 8.5.1.]{KS2} shows that the structure morphism $X \to \Spec k$ is an isomorphism in $S_b^{-1}\Smk$. Therefore, $X$ is $\aone$-connected (see Remark \ref{rem A1-conn and isom to pt}). 
\end{proof}

Next, we prove a comparison result between $\aone$-homotopy and ordinary or \'etale homotopy. For a complex variety $X$, we denote $X^{an}$ for the associated analytic space.

\begin{prop}
Assume $k$ admits a resolution of singularities. Let ${X \to Y}$ be a $S^1$-stable $\aone$-$0$-equivalence of smooth proper $k$-varieties.
\begin{enumerate}
\item The induced map $\pi_1^{\acute{e}t}(X) \to \pi_1^{\acute{e}t}(Y)$ is an isomorphism.
\item If $k = \mathbb{C}$, the continuous map $X^{an} \to Y^{an}$ is a $1$-equivalence of topological spaces.
\end{enumerate}
\end{prop}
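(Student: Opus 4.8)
The plan is to deduce both statements from a single principle: a \emph{birational invariant} of smooth proper $k$-varieties automatically carries isomorphisms of the localized category to isomorphisms. By Proposition \ref{equiv. cond. of S1-stb 0-eq} the hypothesis that $f$ is an $S^1$-stable $\aone$-$0$-equivalence says exactly that $f$ becomes an isomorphism in $S_b^{-1}\Smk$ (equivalently, in $S_b^{-1}\Smk^{prop}$, via the Kahn--Sujatha equivalences recalled in \S\ref{BSALOC} and Example \ref{ex. br. ff}). Concretely, for any functor $G$ on $\Smk^{prop}$ sending every birational morphism to an isomorphism, the universal property of localization (\S\ref{BSALOC}) gives a factorization $G\cong\overline{G}\circ\mathscr{L}$ through $S_b^{-1}\Smk^{prop}$; since $\overline{G}$ is a functor and $\mathscr{L}(f)$ is an isomorphism, $G(f)$ is an isomorphism. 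It therefore suffices to check that the two invariants are birational invariants. To avoid choosing base points I would work throughout with the (étale, resp. topological) fundamental \emph{groupoid} $\Pi_1$, which is an honest functor; an equivalence of connected groupoids is the same datum as an isomorphism of fundamental groups at any base point, so this recovers the stated conclusions.

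For part (1), by the universal property it suffices to check that every birational morphism $g\colon Z\to X$ in $\Smk^{prop}$---which is automatically proper, both schemes being proper over $k$---induces an equivalence $\Pi_1^{\acute{e}t}(Z)\xrightarrow{\cong}\Pi_1^{\acute{e}t}(X)$. Surjectivity on $\pi_1^{\acute{e}t}$ follows since $g$ is proper with $g_\ast\mathcal{O}_Z=\mathcal{O}_X$, hence has geometrically connected fibres. For injectivity I would use that, $X$ being smooth, the locus $S\subseteq X$ over which $g$ fails to be an isomorphism has codimension $\geq 2$; writing $U=X\setminus S$ and $V=g^{-1}(U)\cong U$, Zariski--Nagata purity extends any connected étale cover of $Z$, restricted to $V\cong U$, uniquely to an étale cover of $X$, while étale covers of the normal scheme $Z$ are determined by their restriction to the dense open $V$. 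Hence every cover of $Z$ is pulled back from $X$, giving injectivity. Feeding this into the localization argument yields $\pi_1^{\acute{e}t}(X)\xrightarrow{\cong}\pi_1^{\acute{e}t}(Y)$.

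For part (2), with $k=\mathbb{C}$, both $X^{an}$ and $Y^{an}$ are connected (the varieties are irreducible), so the $\pi_0$-condition is automatic and it remains to produce an isomorphism on $\pi_1^{top}$, which is precisely a $1$-equivalence. Again it suffices that $\pi_1^{top}$ be a birational invariant of smooth proper complex varieties, to be fed into the localization argument. The purity reasoning of part (1) does not transport here, because the exceptional locus of $g$ is a divisor in $Z$, of real codimension $2$, so removing it can a priori alter $\pi_1^{top}$; this is the main obstacle. I would resolve it via weak factorization (available in characteristic $0$, hence under the resolution hypothesis): every birational morphism of smooth proper complex varieties factors into blow-ups and blow-downs along smooth centres, and a single blow-up along a smooth centre $C$ induces an isomorphism on $\pi_1^{top}$ because its exceptional divisor is a projective bundle over $C$ with simply connected fibres. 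Granting this classical invariance, the factorization through $S_b^{-1}\Smk^{prop}$ shows $X^{an}\to Y^{an}$ induces an isomorphism on $\pi_1^{top}$, hence is a $1$-equivalence.

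I expect the difficulty to lie entirely in these two birational-invariance inputs rather than in the homotopy-theoretic bookkeeping: the étale statement is a clean purity-plus-connectedness argument, whereas the topological statement genuinely requires controlling the codimension-one exceptional locus and is most transparently handled through weak factorization. I emphasize that part (2) does not follow formally from part (1): over $\mathbb{C}$ one has $\pi_1^{\acute{e}t}=\widehat{\pi_1^{top}}$ for proper $X$, but agreement of profinite completions is strictly weaker than agreement of the groups themselves, so the topological input is indispensable.
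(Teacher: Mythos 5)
Your proposal is correct and its skeleton is exactly the paper's proof: by Proposition \ref{equiv. cond. of S1-stb 0-eq} the map $f$ becomes an isomorphism in $S_b^{-1}\Smk^{prop}$, and any functor on $\Smk^{prop}$ inverting birational morphisms --- in particular the \'etale or topological fundamental group(oid) of smooth proper varieties --- therefore sends $f$ to an isomorphism. The difference is that the paper stops there, simply \emph{citing} the classical birational invariance of $\pi_1$ for smooth proper varieties, whereas you prove it; your \'etale argument (connectedness of fibres via $g_*\mathcal{O}_Z=\mathcal{O}_X$ for surjectivity, Zariski--Nagata purity plus normality of $Z$ for injectivity) is the standard one. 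One correction to your part (2): your claim that the purity reasoning ``does not transport'' to the topological setting is not right. Your \'etale argument only used \emph{surjectivity} on the $Z$-side (covers of normal $Z$ are determined by their restriction to the dense open $V$), and the same sandwich works topologically: $\pi_1^{top}(V)\to\pi_1^{top}(Z)$ is surjective because $g^{-1}(S)$ has real codimension $\geq 2$, while the composite $\pi_1^{top}(V)\cong\pi_1^{top}(U)\to\pi_1^{top}(X)$ is an isomorphism because $S$ has real codimension $\geq 4$; a surjection $a$ followed by a map $b$ with $b\circ a$ an isomorphism forces both $a$ and $b$ to be isomorphisms. So invoking weak factorization, while perfectly valid in characteristic zero, is far heavier machinery than the statement requires; with that replacement your proof of the classical input becomes as elementary as in the \'etale case, and no gap remains in either version.
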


\begin{proof}
Since the functor of (\'etale) fundamental groups is birational invariant of smooth proper varieties, this is regarded as a functor on $S_b^{-1}\Smk^{prop}$. On the other hand, $X \to Y$ is an isomorphism in $S_b^{-1}\Smk^{prop}$ by Proposition \ref{equiv. cond. of S1-stb 0-eq}. Thus the map of (\'etale) fundamental groups induced by $X \to Y$ is an isomorphism.
\end{proof}

\begin{defi}
A smooth $k$-scheme $X$ is called \textit{$R$-rigid}, if for all $K \in \mathcal{F}_k$ the scalar extension $X_K$ has no rational curves over $K$.
\end{defi}

By \cite[Thm. 6.2.1]{AM}, a smooth proper variety $X$ is $R$-rigid if and only if the canonical morphism of sheaves $X \to \pi_0^{b\aone}(X)$ is an isomorphism. We see examples of $R$-rigid varieties.

\begin{examp}
\begin{enumerate}
\item Let $C$ be a geometrically irreducible smooth projective curve of genus $\geq 1$. Then $C$ is $R$-rigid. Indeed, for every $K \in \mathcal{F}_k$ the scalar extension $C_K$ has genus $\geq 1$ and thus $C_K$ has no rational curves over $K$.
\item An abelian variety $X$ is also $R$-rigid. Indeed, for every $K \in \mathcal{F}_k$ the scalar extension $X_K$ is a copoduct of abelian varieties over $K$ and thus $X_K$ has no rational curves over $K$.
\end{enumerate}
\end{examp}

We give a classification of $R$-rigid varieties up to a $S^1$-stable $\aone$-$0$-equivalences.

\begin{prop}
Assume $k$ admits a resolution of singularities. Let ${f : X \to Y}$ be a morphism of $R$-rigid smooth proper $k$-varieties. Then the following conditions are equivalent.
\begin{enumerate}
\item\label{R-rig. isom} $f$ is an isomorphism in $\Smk$.
\item\label{R-rig. birat} $f$ is birational.
\item\label{R-rig. stab} $f$ is stable birational.
\item\label{R-rig. w.eq} $f$ is an $\aone$-weak equivalence.
\item\label{R-rig. 0-eq} $f$ is a $S^1$-stable $\aone$-0-equivalence.
\end{enumerate}
\end{prop}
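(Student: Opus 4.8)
The plan is to prove all five conditions equivalent by establishing the cycle $(1) \Rightarrow (2) \Rightarrow (3) \Rightarrow (5) \Rightarrow (1)$ together with the side implications $(1) \Rightarrow (4) \Rightarrow (5)$. The two inputs that drive the argument are Proposition \ref{equiv. cond. of S1-stb 0-eq}, which for smooth proper $k$-schemes identifies condition $(5)$ with $f$ being an isomorphism in $S_b^{-1}\Smk$ and with the bijectivity of $X(K)/R \to Y(K)/R$ for all $K \in \mathcal{F}_k$, and the characterization of $R$-rigidity recalled just above the statement, namely that for an $R$-rigid variety $X$ the canonical morphism $X \to \pi_0^{b\aone}(X)$ is an isomorphism of sheaves.

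Most of the implications are formal and I would dispatch them quickly. For $(1) \Rightarrow (2) \Rightarrow (3)$ one only notes that an isomorphism in $\Smk$ is birational and that a birational morphism is a fortiori stable birational. For $(1) \Rightarrow (4)$, an isomorphism is an $\aone$-weak equivalence. For $(4) \Rightarrow (5)$, I would use that the zeroth $\aone$-homology is an invariant of the $\aone$-homotopy type, so an $\aone$-weak equivalence induces an isomorphism $\HAone{0}(X;\mathbb{Z}) \xrightarrow{\cong} \HAone{0}(Y;\mathbb{Z})$; by \cite[Prop. 2.1]{AH} this is precisely condition $(5)$. For $(3) \Rightarrow (5)$, a stable birational morphism is an isomorphism in $S_b^{-1}\Smk^{var}$ by \cite[Thm. 1.7.9]{KS2}, hence in $S_b^{-1}\Smk$, and Proposition \ref{equiv. cond. of S1-stb 0-eq} then yields $(5)$. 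Note that none of these steps uses $R$-rigidity.

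The crux is $(5) \Rightarrow (1)$, the only step where $R$-rigidity enters. Starting from $(5)$, Proposition \ref{equiv. cond. of S1-stb 0-eq} gives that $X(K)/R \to Y(K)/R$ is bijective for every $K \in \mathcal{F}_k$, which by \cite[Thm. 6.2.1]{AM} means that $\pi_0^{b\aone}(f) : \pi_0^{b\aone}(X) \to \pi_0^{b\aone}(Y)$ is an isomorphism of sheaves. Now I invoke $R$-rigidity through the naturality of the canonical morphism $X \to \pi_0^{b\aone}(X)$, which provides a commutative square
\begin{equation*}
\begin{CD}
X @>f>> Y \\
@V{\cong}VV @VV{\cong}V \\
\pi_0^{b\aone}(X) @>{\pi_0^{b\aone}(f)}>> \pi_0^{b\aone}(Y)
\end{CD}
\end{equation*}
in $\Shvk$ whose vertical arrows are isomorphisms because $X$ and $Y$ are $R$-rigid. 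Since $\pi_0^{b\aone}(f)$ is an isomorphism, so is the induced morphism of representable sheaves; full faithfulness of the Yoneda embedding $\Smk \hookrightarrow \Shvk$ then forces $f$ to be an isomorphism in $\Smk$, which is $(1)$. The one point I would check with care is the genuine naturality in $X$ of both the comparison of Proposition \ref{equiv. cond. of S1-stb 0-eq} and the $R$-rigidity isomorphisms, so that the square above really commutes; granting this, the cycle closes and all five conditions are equivalent.
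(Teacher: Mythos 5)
Your proposal is correct and follows essentially the same route as the paper: the formal implications $(1)\Rightarrow(2)\Rightarrow(3)$ and $(1)\Rightarrow(4)\Rightarrow(5)$, then $(3)\Rightarrow(5)$ via the Kahn--Sujatha result that stable birational morphisms become isomorphisms in $S_b^{-1}\Smk$ combined with Proposition \ref{equiv. cond. of S1-stb 0-eq}, and finally $(5)\Rightarrow(1)$ by upgrading bijectivity on $R$-equivalence classes to an isomorphism $\pi_0^{b\aone}(X)\to\pi_0^{b\aone}(Y)$, using $R$-rigidity to identify $X$ and $Y$ with their birational connected components, and concluding by Yoneda. The only cosmetic differences are that you cite \cite[Thm. 6.2.1]{AM} where the paper uses its own Lemma \ref{Req and pibr} (equivalent via Proposition \ref{br isom bA1}), and that the naturality of $X\to\pi_0^{b\aone}(X)$ you flag as needing care is indeed built into the paper's construction.
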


\begin{proof}
\eqref{R-rig. isom} $\Rightarrow$ \eqref{R-rig. birat} $\Rightarrow$ \eqref{R-rig. stab} and \eqref{R-rig. isom} $\Rightarrow$ \eqref{R-rig. w.eq} $\Rightarrow$ \eqref{R-rig. 0-eq} are obvious. We first prove that \eqref{R-rig. stab} $\Rightarrow$ \eqref{R-rig. 0-eq}. By \cite[Thm. 1.7.2]{KS2}, $X \to Y$ is an isomorphism in $S_b^{-1}\Smk$. Thus by Proposition \ref{equiv. cond. of S1-stb 0-eq}, $X \to Y$ is a $S^1$-stable $\aone$-0-equivalence. Next, we prove that \eqref{R-rig. 0-eq} $\Rightarrow$ \eqref{R-rig. isom}. Then the induced map $X(k(U))/R \to Y(k(U))/R$ is bijective for all $U \in \Smk^{var}$ by Proposition \ref{equiv. cond. of S1-stb 0-eq}. Hence, Lemma \ref{Req and pibr} shows that the morphism of birational sheaves
\begin{equation}\label{eq. in pf of R-rig}
\pi_0^{b\aone}(X) \to \pi_0^{b\aone}(Y)
\end{equation}
is an isomorphism. On the other hand, since $X$ and $Y$ are $R$-rigid, there exist canonical isomorphisms of sheaves $X \cong \pi_0^{b\aone}(X)$ and $Y \cong \pi_0^{b\aone}(Y)$. Thus by Yoneda's lemma in $\Smk$, \eqref{eq. in pf of R-rig} induces an isomorphism $X \xrightarrow{\cong} Y$. 
\end{proof}

\end{document}